\documentclass[onefignum,onetabnum]{siamart250211}
\usepackage{amsmath, latexsym}
\usepackage{amssymb}
\usepackage[utf8]{inputenc}
\usepackage[english]{babel}
\usepackage{tikz, graphicx}
\usetikzlibrary{shapes.geometric, arrows}
\usetikzlibrary{positioning}
\usepackage{textpos}
\usepackage{amsfonts}
\usepackage{newlfont}
\usepackage{enumerate}
\usepackage{epsfig}
\usepackage{subfig}
\usepackage{subcaption}
\usepackage{float} 
\usepackage{multirow}
\usepackage{ntheorem}
\usepackage{mathrsfs}
\usepackage{xcolor}
\usepackage{bm}

\newsiamremark{remark}{Remark}

\def\L{\mathcal{L}}

\def\F{\mathcal{F}}

\def\BB{\mathfrak{B}}

\def\Y{\mathrm{Y}}

\def\G{\mathcal{G}}

\def\X{\mathbb{X}}

\def\U{\mathcal{U}}
\def\X{\mathcal{X}}
\def\Y{\mathcal{Y}}

\def\H{\mathcal{H}}
\def\pot{\partial\Omega}

\ifpdf
\hypersetup{
	pdftitle={An Example Article},
	pdfauthor={h}
}
\fi

\author{Arbaz Khan\thanks{Department of Mathematics, Indian Institute of Technology Roorkee, Roorkee 247667, India.
		E-mail: {arbaz@ma.iitr.ac.in}, {shiv\_m@ma.iitr.ac.in}.}
	\and Kent-Andr\' e Mardal\thanks{Department of Mathematics, University of Oslo, Norway.
		E-mail: {kent-and@simula.no}\\
		 \textbf{Funding:} ``Shiv Mishra is supported by UGC, India."
	}
	\and Shiv Mishra\footnotemark[1]
}

\ifpdf
\hypersetup{pdftitle={New \LaTeX\ Style} ,pdfauthor={S. Mishra and A. Khan}}
\fi
\begin{document}
	\title{Mixed Consistent PINNs for Elliptic Obstacle Problems with Stability Analysis}
\date{Date: \today}
\maketitle


\begin{abstract}
	We propose a consistent physics-informed neural networks (CPINNs) framework for elliptic obstacle problems formulated as variational inequalities. The method is based on a mixed loss functional that is rigorously aligned with the stability structure of the underlying problem and incorporates obstacle constraints through a consistent treatment of the associated Lagrange multiplier. Relying on optimal recovery theory under Besov regularity assumptions, we establish near-optimal convergence rates for the simultaneous reconstruction of the solution and the multiplier from pointwise interior and boundary data. To enable practical implementation, we construct discrete counterparts of the continuous stability norms and duality pairings, leading to fully computable and theoretically justified training losses. Numerical experiments on benchmark obstacle problems demonstrate the accuracy, stability, and robustness of the proposed approach, and highlight its clear advantages over standard PINNs.
\end{abstract}

\begin{keywords}
	Physics-informed neural networks; obstacle problem; collocation methods; consistent PINNs; optimal recovery; 
\end{keywords}

\begin{MSCcodes}
	35R35, 35J87, 65K15, 41A30
\end{MSCcodes}

\section{Introduction}
In this paper we consider the obstacle problem in primal form on a polygonal Lipschitz domain \(\Omega\subset \mathbb{R}^d, d\geq 2\) with boundary \(\partial\Omega\). The problem reads: Find $u\in H^1(\Omega)$ such that 
\begin{subequations}\label{1}
	\begin{align}
		- \Delta u &\geq f \quad \text{in } \Omega, \\
		u &\geq \psi \quad \text{in } \Omega, \\
		(u-\psi)\,(-\Delta u - f) &= 0 \quad \text{in } \Omega,\\
		u &= g \quad \text{on } \partial \Omega. 
	\end{align}
\end{subequations}
Here, $f$ is the source term, $\psi$ the obstacle function and $g$ the boundary condition, respectively. 
It is well-known from \cite{rodrigues1987obstacle} that for $f\in H^{-1}(\Omega)$, $\psi\in H^1(\Omega)$ and $g\in H^{1/2}(\partial\Omega)$ the problem admits a unique solution $u\in H^1(\Omega)$ and the following stability estimate holds
\begin{align}\label{mainestimate}
	c\, X \, \leq\|u\|_{H^1(\Omega)}\leq C\, X,\ \text{for}\ X:= \left(\|f\|_{H^{-1}(\Omega)}+\|g\|_{ H^{1/2}(\partial\Omega)} + \|\psi\|_{H^1(\Omega)} \right),
\end{align}
where the equivalence constants  $c,C>0$ depend only on $\Omega$.

The problem \eqref{1} translates well to a finite element setting for second order elliptic problems.   
Error estimates  were established in early works by~\cite{falk1974error, mosco1974one, brezzi1977error}, with comprehensive overviews provided in monographs such as~\cite{glowinski2013numerical}. An alternative line of research focuses on the Lagrange multiplier formulation, which introduces the reaction force as an additional unknown and naturally leads to mixed and stabilized finite element methods~\cite{gustafsson2017mixed, haslinger1996numerical}. This framework enables the use of semismooth Newton and active-set strategies, and has been analyzed in depth in~\cite{hintermuller2002primal}, including a priori and a posteriori error estimates for mixed and stabilized schemes~\cite{weiss2010posteriori,braess2005posteriori}, as well as Nitsche and penalty-type formulations~\cite{chouly2013nitsche, burman2017galerkin} for contact and obstacle problems. In~\cite{fuhrer2020first}, the authors provided refined stability results and improved discretization techniques by using the first-order least-squares method for the obstacle problem.

Recently, physics informed neural networks (PINNs)  have emerged as mesh-free solvers for PDEs~\cite{raissi2019physics}. Typically,  
the problem is phrased in a least square manner, which leads to increase regularity requirements~\cite{MSMR, zeinhofer2024unified}. 
The obstacle problem problem was considerd in such a setting in for instance \cite{el2025physics}, but a mathematical theory for error estimates
was not established.  
Alternatively, the 
deep Ritz method and finite neuron method~\cite{muller2022error, yu2018deep, xu2020finite} exploits standard regularity and may as such be extended to the obstacle problem in a similar manner as for standard finite elements. 
An interesting new alternative approach however is that of \cite{BVPS} where Besov space considerations enable the exploitation of bounds such as \eqref{mainestimate} although not from a variational setting, so-called consistent PINNs (CPINNs).  
The approach uses optimal recovery and Besov regularity to characterize the best-possible approximation rates for PDEs from pointwise data~\cite{BDS, krieg2022recovery}. 
The existing CPINN theory is however so far limited to standard PDEs like elliptic and parabolic~\cite{BVPS, mishra2025priorierroranalysisconsistent, https://doi.org/10.1002/nme.70320}, and does not cover variational inequalities or obstacle-type problems, where the presence of inequality constraints  requires a different treatment.

In our approach here we aim to mirror classical works~\cite{gustafsson2017mixed}, where mixed methods are exploited for the obstacle problem.  
However, as CPINNs is not expressed in a variational setting, but rather exploit Sobolev embeddings and non-standard formuations to
ensure $H^1$ convergence, care needs to be taken in the construction of the minimization problem. Specifically, we avoid the evaluation
of $H^{-1}$ type norms, which may be computationally expensive, while not requiring extra regularity as in standard PINNs.  

\subsection{Our contribution}
In this work, we propose a CPINNs framework for obstacle problems that encompasses both scalar and vector-valued formulations. In contrast to standard PINNs, the proposed approach systematically exploits stability estimates of the underlying problem through Besov-space techniques, in the spirit of CPINNs, thereby enabling a principled treatment of inequality constraints.

The main contributions of this work can be summarized as follows:
\begin{itemize}
	\item \textbf{Consistent loss formulation.} We introduce a mixed CPINNs loss functional that is equivalent to the natural stability norms of obstacle problems. The formulation incorporates obstacle constraints and associated Lagrange multipliers in a mathematically consistent and stable manner.
	
	\item \textbf{Extension of CPINNs to variational inequalities.} We extend the CPINNs framework beyond standard elliptic and parabolic equations to variational inequalities, thereby addressing problems with inequality constraints and intrinsically non-smooth solution structures.
	
	\item \textbf{Optimal recovery guarantees.} Under Besov regularity assumptions, we establish near-optimal convergence rates for the recovery of both the primal variable and the Lagrange multiplier from pointwise interior and boundary observations, using tools from optimal recovery theory.
	
	\item \textbf{Discrete norm equivalence and computable losses.} We construct discrete counterparts of the continuous stability norms and duality pairings, and prove their equivalence up to optimal recovery errors. This yields fully computable and theoretically justified training losses.
\end{itemize}

Numerical experiments on benchmark obstacle problems, including elasticity and higher-order models, validate the theoretical findings and demonstrate the improved accuracy and robustness of the proposed CPINNs approach compared to standard PINNs.

\subsection{Outline of the paper}
The paper is organized as follows. Section~\ref{Sec_2} introduces the model obstacle problem, its mixed formulation, and the associated functional setting and stability estimates. In Section~\ref{Sec_3}, we develop a mixed CPINNs loss formulation that incorporates obstacle constraints and Lagrange multipliers in a stability-equivalent manner. Section~\ref{Sec_4} contains an optimal recovery analysis under Besov regularity assumptions and establishes near-optimal convergence rates for both the solution and the multiplier from pointwise data. Section~\ref{Sec_5} constructs discrete counterparts of the continuous stability norms and duality pairings, which further yield fully computable training losses based on pointwise data. Section~\ref{Sec_6} reports numerical experiments on benchmark scalar, vector-valued, and higher-order obstacle problems, confirming the accuracy and robustness of the proposed CPINNs framework.

\section{Functional setting and optimization formulation}\label{Sec_2}
\subsection{Function spaces}
We use the standard notations for Sobolev spaces $H^1_0(\Omega)$ and $H^s(\Omega)$ for $s>0$. The trace space $H^{s-1/2}(\partial\Omega)$ is the space of boundary traces of functions from $H^s(\Omega)$, defined for all integers $s\geq 1.$ $H^{-1}(\Omega)$ is defined as the dual space corresponding to $H^{1}_0(\Omega)$, with norm denoted by $\|\cdot\|_{-1}$. The duality pairing $\langle \cdot, \cdot\rangle$ is understood as the continuous extension of the $L^2(\Omega)$ inner product. We remark that we often use vector functions denoted by boldface. The vectors are of size two and denoted as $\boldsymbol{v} = (v,\mu)$, and hence $(\cdot, \cdot)$ is used for listing the two functions.

Let us define space $W:= H^1_g(\Omega)=\{u\in H^1(\Omega)\mid u=g \text{ on } \partial\Omega\}$. We define the product space $V:= W\times H^{-1}(\Omega)$ with the corresponding norm 
$$\|\boldsymbol{v}\|_V^2:= \| v\|_{H^1(\Omega)}^2+\|\mu \|^2_{-1},\quad \text{for}\ \boldsymbol{v} = (v,\mu)\in V.$$

We further discuss the space $U = \{(u,\lambda)\in V\mid \, \Delta u+\lambda \in H^{-1}(\Omega) \}$, which is a closed space defined over a norm
$$\|\boldsymbol{u}\|_U^2:= \|u\|_{H^1(\Omega)}^2 +\|\Delta u +\lambda \|^2_{-1},\quad \text{for}\ \boldsymbol{u} = (u,\lambda)\in U.$$

For $v\in H^1_0(\Omega)$, we write $v\geq 0$, if $v\geq 0$ almost everywhere in $\Omega$. Similarly an element $\lambda\in H^{-1}(\Omega)$ is non-negative if $\langle \lambda, v\rangle \geq 0,$ for all $v\in  H^1_0(\Omega)$ with $v\geq 0.$ \\

It is easy to observe that the norm $\|\cdot\|_U$ is stronger than the norm $\|\cdot\|_V$. Indeed, for a positive $C$, we can show the estimate
\begin{align*}
	\|u\|_{H^1(\Omega)}^2+\|\lambda\|^2_{-1} &\leq \|u\|_{H^1(\Omega)}^2+\|\lambda+\Delta u\|^2_{-1} + \|\Delta u\|^2_{-1}\\
	&\leq \|u\|_{H^1(\Omega)}^2+\|\Delta u+\lambda\|^2_{-1} + \|\nabla u\|^2_{L^2(\Omega)}\\
	&\leq (1+C)\|u\|_{H^1(\Omega)}^2+\|\Delta u+\lambda\|^2_{-1} .
\end{align*}

\textbf{Besov spaces.}
Let $\Omega \subset \mathbb{R}^d$, $0<s<\infty$, and $0<p,q\le \infty$.
For an integer $r>s$ and a step size $h\in\mathbb{R}^d$, we denote the forward finite difference operator of order $r$ by $\Delta_h^r$.
For $t>0$, the modulus of smoothness of a function $f$ is defined by
\begin{align}
	\omega_r(f,t)_p := \sup_{|h|\le t} \|\Delta_h^r f\|_{L^p(\Omega_h)},\ \text{where}\ \Omega_h := \{x\in\Omega : [x,x+h]\subset\Omega\}.
\end{align}
Using this notation, the Besov space $B^s_{pq}(\Omega)$ is defined as
\begin{align}
	B^{s}_{pq}(\Omega) = \left\{ f\in W^{[s]}_{p}(\Omega)\mid  |f|_{B^{s}_{pq}} =  \Bigg( \int_{\Omega} [t^{-s}\omega_{r}(f,t)_p]^q \,\frac{dt}{t} \Bigg)^{\frac{1}{q}} < \infty \right\} ,
\end{align}
For the special case \( q' = \infty \), the Besov norm is computed by taking the supremum over \( t\in (0,h) \) instead of integrating with respect to \({dt}/{t} \). The corresponding function space is denoted by  $B^{s}_{p}(\Omega)$. The associated norm is given as
\[
\|f\|_{B^s_{pq}(\Omega)}
:= \|f\|_{W^{[s] }_p(\Omega)}
+ |f|_{B^s_{pq}(\Omega)}.
\]

\subsection{Minimization problem}
Let $u \in H^{1}(\Omega)$ denote the solution of the obstacle problem~\eqref{1}. Define the Lagrange multiplier (interpreted as the reaction force) \(\lambda = -\Delta u -f\) such that the obstacle problem~\eqref{1} can be reformulated into a second-order system, given as
\begin{align}\label{prob2}
	- \Delta u-\lambda &= f,  \quad 
	u \geq \psi,  \quad 
	\lambda\,(u-\psi) = 0, 
\end{align}
in domain \(\Omega\) with \(u|_{\partial\Omega} = g\). It is important to emphasize from~\cite{gustafsson2017mixed, fuhrer2024mixed} that under the conditions \(f\in H^{-1}(\Omega)\) and \(u\in H^{1}(\Omega)\), the Lagrange multiplier \(\lambda \in H^{-1}(\Omega)\).

Let \(u\in W (:=H_g^1(\Omega))\) be the unique solution of the obstacle problem. Let us formulate the functional 
\begin{align}\label{J(u)}
	J(u,\lambda) = \|\Delta u +\lambda + f\|^2_{-1} + \left < \lambda, u-\psi\,\right >,
\end{align}
where $\langle \cdot, \cdot\rangle$ denotes the duality pairing. Now, we develop the least-squares problem as a minimization over the admissible set
\begin{align*}
	K^s = \left\{(u,\lambda)\in W\times H^{-1}(\Omega)\mid u-\psi\geq 0, \lambda \geq 0\right\},
\end{align*}
which is a nonempty, convex, and closed set and, \(\psi\) is the given obstacle function. Thus, the corresponding minimization problem is to find \((u,\lambda)\in K^s\) such that
\begin{align}\label{minprob}
	J(u,\lambda) = \mathop{\min}_{(v,\mu)\in K^s} J(v,\mu).
\end{align}
The equivalence between the obstacle problem and a mixed least-squares formulation has been established in~\cite{fuhrer2020first} for obstacle problems with force term $f\in L^2(\Omega$), based on a first-order least-squares reformulation of the problem. The theorem stated below extends this framework to the natural setting $f\in H^{-1}(\Omega)$, as it provides the stability foundation for the consistent loss formulation used in the CPINNs framework.
\begin{theorem}\label{thm_2.1}
	Let \(f\in H^{-1}(\Omega)\), $\psi\in H^1(\Omega) $ and $g\in H^{1/2}(\partial\Omega)$. Then the problems \eqref{1} and \eqref{minprob} are equivalent. Moreover, the variational problem~\eqref{minprob} admits a unique solution $\boldsymbol{u}\in K^s$, and the following estimate holds
	\begin{align}\label{5}
		J(v,\mu) \geq C_J\, \|\boldsymbol{v}-\boldsymbol{u}\|^2_{U} \quad \text{for every } \boldsymbol{v}\in K^s,
	\end{align}
	where the constant $C_J$ depends only on domain $\Omega$.
\end{theorem}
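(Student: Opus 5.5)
The plan is to follow the first-order least-squares argument of \cite{fuhrer2020first}, replacing the $L^2$-norm of the residual by the $H^{-1}$-norm. The central idea is a single coercivity estimate obtained by testing the residual against $v-u\in H^1_0(\Omega)$; existence, uniqueness and the equivalence with \eqref{1} then all follow from it.

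\textbf{Step 1: a minimizer exists and $J$ vanishes there.} Let $u\in W$ be the unique solution of \eqref{1}, which exists by \cite{rodrigues1987obstacle}. Set $\lambda:=-\Delta u-f\in H^{-1}(\Omega)$. The variational inequality gives $\lambda\ge 0$ in the sense of the definition above, and testing with $v=\psi$ (after the usual truncation $\max(u,\psi)$ if needed) shows $\langle\lambda,u-\psi\rangle=0$. Hence $\boldsymbol u=(u,\lambda)\in K^s$. Moreover, for every $(v,\mu)\in K^s$ we have $J(v,\mu)\ge 0$, since $\mu\ge0$ and $v-\psi\ge0$ with $v-\psi\in H^1$. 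We must be careful here: $v-\psi$ need not lie in $H^1_0$, so the pairing is understood with the same boundary data, or we restrict to $g\ge\psi$ on $\partial\Omega$. Since $J(\boldsymbol u)=0$, the point $\boldsymbol u$ is a minimizer.

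\textbf{Step 2: coercivity \eqref{5}.} Fix $\boldsymbol v=(v,\mu)\in K^s$ and put $e=v-u\in H^1_0(\Omega)$ and $\eta=\mu-\lambda$. Using $\Delta u+\lambda+f=0$, the residual is $r:=\Delta v+\mu+f=\Delta e+\eta$. Testing with $e$ gives
\begin{align*}
\|\nabla e\|_{L^2(\Omega)}^2=\langle \eta,e\rangle-\langle r,e\rangle .
\end{align*}
The key sign argument is
\begin{align*}
\langle\eta,e\rangle=\langle\mu,v-\psi\rangle-\langle\mu,u-\psi\rangle-\langle\lambda,v-\psi\rangle+\langle\lambda,u-\psi\rangle\le \langle\mu,v-\psi\rangle ,
\end{align*}
which uses $\mu,\lambda\ge0$, $u-\psi,v-\psi\ge0$ and complementarity. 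Combining this with Young's inequality and the Poincaré inequality on $H^1_0$ yields
\begin{align*}
\tfrac12\|\nabla e\|^2\le \langle\mu,v-\psi\rangle+C\|r\|_{-1}^2\le C\,J(v,\mu).
\end{align*}
Finally, $\|\Delta e+\eta\|_{-1}^2=\|r\|_{-1}^2\le J(v,\mu)$, and together with the Poincaré bound for $\|e\|_{H^1}$ this gives $\|\boldsymbol v-\boldsymbol u\|_U^2\le C\,J(v,\mu)$. The constant depends only on the Poincaré constant of $\Omega$.

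\textbf{Step 3: uniqueness and equivalence.} Any minimizer $\boldsymbol w$ of \eqref{minprob} satisfies $J(\boldsymbol w)\le J(\boldsymbol u)=0$, so \eqref{5} forces $\boldsymbol w=\boldsymbol u$ in $U$. In particular $w=u$, and then $\mu=\lambda$ because $\|\Delta e+\eta\|_{-1}=0$. This proves uniqueness. Conversely, a minimizer has $J=0$, so $-\Delta w-\mu=f$, $\mu\ge0$ and $\langle\mu,w-\psi\rangle=0$. From this we recover \eqref{1}: we get $-\Delta w\ge f$, and the complementarity gives the variational inequality $\langle-\Delta w-f,z-w\rangle\ge0$ for all admissible $z$. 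This establishes the equivalence.

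\textbf{Main obstacle.} The delicate point is the rigorous meaning of $\langle\mu,v-\psi\rangle$ when $v-\psi\notin H^1_0(\Omega)$, together with the justification of complementarity for $f\in H^{-1}$ only. I would handle this by assuming the compatibility $g\ge\psi$ on $\partial\Omega$ and working with the sign-preserving pairing on $H^1_0$-type differences. Wherever a pairing appears, I would rewrite it as a combination of pairings against $e\in H^1_0$ and the nonnegative quantity $u-\psi$. The rest of the argument is routine.
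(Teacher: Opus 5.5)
Your proposal is correct and follows essentially the same route as the paper. With $w=v-u$ and $\nu=\mu-\lambda$, you write $J(v,\mu)=\|\Delta w+\nu\|_{-1}^2+\langle\mu,v-\psi\rangle$ and use $\lambda,\mu\ge 0$ plus complementarity to get $\langle\mu,v-\psi\rangle\ge\langle\nu,w\rangle$. You then use $\langle\nu,w\rangle=\langle\Delta w+\nu,w\rangle+\|\nabla w\|_{L^2(\Omega)}^2$ with Young and Poincar\'e, and read off uniqueness from the coercivity bound. Your explicit converse in Step 3 is a harmless extra, since uniqueness already identifies every minimizer with $(u,\lambda)$.

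The well-definedness problem you flag for $\langle\mu,v-\psi\rangle$ when $v-\psi\notin H^1_0(\Omega)$ is real, but the paper glosses over it in exactly the same way. Note that $g\ge\psi$ on $\partial\Omega$ is already forced by $K^s\neq\emptyset$. That compatibility does not by itself make the pairing meaningful. Reading a nonnegative $\mu\in H^{-1}(\Omega)$ as a Radon measure, and the pairing as $\int_\Omega (v-\psi)\,d\mu\in[0,\infty]$, does.
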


\begin{proof}
	Let $\boldsymbol{u}:= (u,\lambda) = (u,- \Delta u-f)\in K^s$ be the unique solution of problem~\eqref{prob2}. By definition, we have $J(v,\mu)\geq 0$ for every $\boldsymbol{v} \in K^s$, and moreover, $J(u,\lambda)= 0$. Hence, $\boldsymbol{u}$ is a minimizer of the functional. \\
	Assume that condition \eqref{5} holds and $\boldsymbol{u}^*\in K^s$ is defined as another minimizer. Then from \eqref{5}, we get the condition that $\boldsymbol{u} = \boldsymbol{u}^*$. Thus, it remains only to verify inequality \eqref{5}. Let $\boldsymbol{v}:= (v,\mu)\in K^s$. Since each term in $J(v,\mu)$ is non-negative, and using the relation $f = - \Delta u-\lambda $. For a positive constant $C_p$, we obtain 
	\begin{align}\label{J_b}
		J(v,\mu) &= \|\Delta(v-u)+ (\mu-\lambda)\|^2_{-1} + \langle\mu, v-\psi\rangle\\
		&\geq \frac{1}{1+C_p^2} \left\{ (1+C_p^2)\|\Delta(v-u)+ (\mu-\lambda)\|^2_{-1} + \langle\mu, v-\psi\rangle \right\}.\notag
	\end{align}
	Furthermore, we recall the relations
	\begin{align*}
		\langle\lambda, u-\psi\rangle=0,\quad  \langle\lambda, v-\psi\rangle\geq 0,\quad  \langle\mu, u-\psi\rangle\geq 0.
	\end{align*}
	Therefore, for the last term, we compute
	\begin{align*}
		\langle\mu, v-\psi\rangle &= \langle\mu, v-u\rangle + \langle\mu, u-\psi\rangle + \langle\lambda, u-\psi\rangle\\
		&\geq \langle\mu, v-u\rangle + \langle\lambda, u-\psi\rangle + \langle\lambda, \psi-v\rangle\\
		&= \langle\mu, v-u\rangle + \langle\lambda,u-v\rangle = \langle\mu-\lambda, v-u\rangle.
	\end{align*}
	Let $\boldsymbol{w}:= \boldsymbol{v}-\boldsymbol{u} = (w,\nu).$ From~\eqref{J_b}, we get
	\begin{align*}
		J(v,\mu) &\geq  \frac{1}{1+C_p^2} \left\{ (1+C_p^2)\|\Delta(v-u)+ (\mu-\lambda)\|^2_{-1} + \langle\mu-\lambda, v-u\rangle\right\}\\
		&=  \frac{1}{1+C_p^2} \left\{  (1+C_p^2)\|\Delta w+ \nu\|^2_{-1}  + \langle\nu, w\rangle\right\}\\
		&=  \frac{1}{1+C_p^2} \left\{ (1+C_p^2)\|\Delta w+ \nu\|^2_{-1} + \langle\Delta w+ \nu, w\rangle-\langle\Delta w, w\rangle\right\}.
	\end{align*}
	Then, by using the Cauchy-Schwarz and Young's inequality, we obtain
	\begin{align*}
		J(v,\mu) &\geq  \frac{1}{1+C_p^2} \left\{ (1+C_p^2)\|\Delta w+ \nu\|^2_{-1} - C_P\|\Delta w+ \nu\|_{-1}\|\nabla w\|_{L^2(\Omega)}+\langle\nabla w, \nabla w\rangle\right\} \\
		&\geq  \frac{1}{1+C_p^2} \Bigg\{ (1+C_p^2)\|\Delta w+ \nu\|^2_{-1} - \left(\frac{C_p^2}{2}\|\Delta w+ \nu\|_{-1}^2+\frac{1}{2} \|\nabla w\|^2_{L^2(\Omega)}\right) +\|\nabla w\|^2_{L^2(\Omega)}\Bigg\}\\
		&\geq  \frac{1}{1+C_p^2} \Bigg\{ \left(1+\frac{C_p^2}{2}\right)\|\Delta w+ \nu\|^2_{-1} +\frac{1}{2}\|\nabla w\|^2_{L^2(\Omega)}\Bigg\}\\
		&\geq C_J\left(\|\Delta w+ \nu\|^2_{-1} +\|w\|^2_{H^1_0(\Omega)}\right)  \simeq \|\boldsymbol{w}\|^2_U = \|\boldsymbol{v}-\boldsymbol{u} \|^2_U  .
	\end{align*}
\end{proof}

\subsection{Loss formulation}
We consider a loss formulation that enforces both the PDE residual and the obstacle conditions. We now introduce the loss formulation derived from the \eqref{J(u)} by introducing the Lagrange multiplier \(\lambda\). Motivated by the structure of the functional \eqref{J(u)}, we define the mixed loss $\mathcal{L}_M$ over $K^s$ by
\begin{align}
	\mathcal{L}_M(\boldsymbol{v}) =  \|\Delta v +\lambda + f\|^2_{-1}+ \|g-v\|^2_{H^{1/2}(\partial\Omega)}  + \left < \lambda, v-\psi\,\right >.
\end{align}
Since we know that $J(u,\lambda)= 0$, the unique minimizer of the loss functional $	\mathcal{L}_M(\boldsymbol{v}) $ over the admissible convex set $K^s$ is the unique solution of the PDE~\eqref{prob2}. The corresponding minimization problem is formulated as
\begin{align*}
	u =\mathop{\arg\min}_{\boldsymbol{v}\, \in\, K^s} \mathcal{L}_M(\boldsymbol{v}).
\end{align*}

\section{Discretization method of PINNs}\label{Sec_3}
Neural networks have recently emerged as powerful tools for approximating solutions to partial differential equations (PDEs), including those described by equation~\eqref{1}. Consider a family of neural networks $\mathcal{N}_n$  with a fixed architecture containing $n$ parameters. The objective is to find an approximation $\hat{u}\in \mathcal{N}_n$ to the true solution $u$ by minimizing an appropriate norm $\|\cdot\|_X$.

To train the network, training data are generated from the domain $\Omega$ and its boundary $\partial\Omega$. 
These datasets, denoted by $f,\psi$ and $g$, are defined as $f_i = f(x_i)$, $\psi_i = \psi(x_i)$ and $g_j = g(x_j)$, where $\{x_i\}_{i=1}^m$ are points sampled in the interior and $ \{x_j\}_{j=1}^{\bar m}$ lie on the boundary. Collectively, these points form the sets \(\mathcal{X} := \{x_1,x_2,\dots, x_{\tilde m}\}, \,	\mathcal{Y} := \{x_1,x_2,\dots ,x_{\bar m}\}.\) To approximate the minimizer of the mixed loss introduced above, we employ corresponding PINNs framework.

\subsection{CPINNs framework using mixed loss}
To approximate the minimizer of the mixed loss $\mathcal{L}_M$, we extend the standard PINNs by introducing a neural approximation for the Lagrange multiplier $\lambda$. 
Let $\boldsymbol{\mathcal{N}}_n = \{(u_\theta,\lambda_\theta)\}$ denote a pair of neural networks with n parameters. In the mixed PINNs formulation, our objective is to find
\[
\boldsymbol{\hat u} = (\hat{u},\hat{\lambda}) =  \mathop{\arg\min}_{(v,\lambda) \in \boldsymbol{\mathcal{N}}_n} \L^p_M(v,\lambda),
\]
where
\begin{align*}
	\L^p_M(v,\lambda) =
	\frac{1}{\tilde{m}}\sum_{i=1}^{\tilde{m}}|\Delta v(x_i)+\lambda(x_i)+f(x_i)|^2
	+ \frac{1}{\bar{m}}\sum_{i=1}^{\bar{m}}|v(x_i)-g(x_i)|^2\\
	+ \frac{1}{\tilde{m}}\sum_{i=1}^{\tilde{m}}\left|\lambda(x_i)\,[v(x_i)-\psi(x_i)]\right|.
\end{align*}
This formulation enforces both the PDE and the obstacle constraint through the
coupling term $\langle \lambda, v-\psi\rangle$, which is evaluated over
least square norm discretization, and therefore does not correspond to the exact
discrete counterpart of the continuous stability norms.
We therefore employ the consistent PINNs framework, in which the continuous stability
norms appearing in the mixed formulation are replaced by their exact discrete
counterparts. To obtain a computable discretization of the $H^{-1}(\Omega)$ norm, we employ the Sobolev
embedding $L^\gamma(\Omega)\hookrightarrow H^{-1}(\Omega)$ with
$\gamma=2d/(d+2)$. The resulting consistent loss corresponding to $\mathcal{L}_M$ is given by
\begin{align}\label{discreteL}
	\L_M^c(v,\lambda) &=
	\left[\frac{1}{\tilde{m}}\sum_{i=1}^{\tilde{m}}|\Delta v(x_i)+\lambda(x_i)+f(x_i)|^\gamma\right]^{\frac{2}{\gamma}}
	+ \frac{1}{\bar{m}}\sum_{i=1}^{\bar{m}}\left|v(x_i)-g(x_i)\right|^2\\
	&+ \frac{1}{\bar{m}^2}\sum_{\substack{i,l=1 \\ i\neq l}}^{\bar{m}}
	\frac{\left|[v-g](x_i)-[v-g](x_l)\right|^2}{|x_i-x_l|^d} + \frac{1}{\tilde{m}}\sum_{i=1}^{\tilde{m}}\left|\lambda(x_i)\,[v(x_i)-\psi(x_i)]\right|.\notag
\end{align}

\section{Optimal recovery}\label{Sec_4}
In this section, we address the question of whether the available sampled data \( (f, \psi, g) \), observed at prescribed sets of locations \( \mathcal{X}, \mathcal{Y} \subset \overline{\Omega} \), provide sufficient information to reconstruct the solution \( u \) of \eqref{1} within a specified tolerance in the \( H^1(\Omega) \)-norm. This naturally leads to an optimal recovery (OR) framework, in which the attainable accuracy depends on the spatial configuration and number of sampling points, as well as on the regularity of the underlying data functions.

We assume that the inputs \( f, \psi,\) and \( g \) are continuous and belong to Besov spaces \( \mathfrak{B} \) compactly embedded in spaces of continuous functions. This assumption guarantees that the solution \( u \) of the boundary value problem is well-defined and lies in \( H^1(\Omega) \). Specifically, we consider
\begin{subequations}\label{3.24}
	\begin{align}
		f &\in \mathcal{F} := U(\BB), && \BB= B^s_{pq}(\Omega), \quad s > d/p, \ p, q \in [1,\infty], \\
		\psi &\in \mathcal{H} := U(\check{\BB}), && \check{\BB} = B^{\check{s}}_{\check{p}\check{q}}(\Omega), \quad \check{s} > d/\check{p}, \ \check{p} \in [1,2],\,\check{q} \in [1,\infty],\\
		g &\in \mathcal{G} := \mathrm{Tr}(U(\bar{\BB})), && \bar{\BB} = B^{\bar{s}}_{\bar{p}\bar{q}}(\partial\Omega), \quad \bar{s} > d/\bar{p}, \ \bar{p},\in [1,2], \, \bar{q} \in [1,\infty], 
	\end{align}
\end{subequations}
where \( U(\BB) \) denotes the unit ball in the corresponding Besov space. Under these regularity conditions, the embeddings  $\mathcal{F} \hookrightarrow C(\Omega) \subset H^{-1}(\Omega) , \
\mathcal{H} \hookrightarrow C(\Omega) \subset H^{1}(\Omega), $ and $
\mathcal{G} \hookrightarrow C(\partial\Omega) \subset H^{1/2}(\partial\Omega),$
are compact, which imply that the solution space
\[
\mathcal{U} := \big\{ u \in C(\Omega) \,\big|\, u \text{ solves } \eqref{1} \text{ for } f \in \mathcal{F}, \ \psi \in \mathcal{H}, \ g \in \mathcal{G} \big\}
\]
is a compact subset of \( H^1(\Omega) \). Given the observed samples \( f = (f_1, \ldots, f_{\tilde m}) \), \( \psi = (\psi_1, \ldots, \psi_{\tilde m}) \), and \( g = (g_1, \ldots, g_{\bar m}) \), the accessible information about the underlying functions is restricted to the following data-consistent sets:
\begin{align*}
	\mathcal{F}_{\mathrm{data}} &:= \{ f \in \mathcal{F} : f_i = \hat{f}_i, \ i = 1, \ldots, \tilde m \}, \\
	\mathcal{H}_{\mathrm{data}} &:= \{ \psi \in \mathcal{H} : \psi_j = \hat{\psi}_j, \ j = 1, \ldots, \tilde m \},  \\
	\mathcal{G}_{\mathrm{data}} &:= \{ g \in \mathcal{G} : g_j = \hat{g}_j, \ j = 1, \ldots, \bar m \}.
\end{align*}
Accordingly, the feasible set of solutions consistent with the measurements is defined as
\begin{align*}
	\mathcal{U}_{\text{data}} &:= \left\{ u \in \mathcal{U} \, \middle| \, 
	\begin{aligned}
		\Delta u(x_i) \geq \hat{f}_i,\ &u(x_i) \geq \hat \psi_i,\ && i = 1, \dots, \tilde m, \\
		\hspace{-2mm}u(x_j) &= \hat{g}_j, && j = 1, \dots, \bar m, \\
		(\Delta u(x_i)-f(x_i))&(u(x_i)-\psi(x_i)) = 0 ,\ && i = 1,\dots,\tilde m.\\
	\end{aligned}
	\right\},
\end{align*}
or equivalently,
\[
\mathcal{U}_{\mathrm{data}}
= \big\{ u : u \text{ solves } \eqref{1} \text{ with } f \in \mathcal{F}_{\mathrm{data}}, \ \psi \in \mathcal{H}_{\mathrm{data}}, \ g \in \mathcal{G}_{\mathrm{data}}\big\}.
\]

The central question is how strongly the constraint \( u \in \mathcal{U}_{\mathrm{data}} \) determines the true solution. The fidelity of the recovered approximation depends on both the geometry of the sampling sets \( \mathcal{X} \) and \( \mathcal{Y} \) and the specific measured values. To characterize the best achievable accuracy under fixed sampling configurations, we define the uniform optimal recovery rate by
\[
R^*(\mathcal{U}, \mathcal{X}, \mathcal{Y})_X := 
\sup_{u \in \mathcal{U}} R\big(\mathcal{U}_{\mathrm{data}}(u)\big)_X,
\]
which measures the worst-case recovery error in a given norm \( \|\cdot\|_X \) over all admissible solutions \( u \in \mathcal{U} \). The optimal rate achievable using at most \( m \) data samples is then defined as
\[
R^*_m(\mathcal{U})_X :=
\inf_{\substack{\mathcal{X} \subset \Omega,\, \mathcal{Y} \subset \partial\Omega \\ |\mathcal{X}| + |\mathcal{Y}| = m}}
R^*(\mathcal{U}, \mathcal{X}, \mathcal{Y})_X, \quad m \ge 2.
\]
Analogously, for the recovery of the forcing term \( f \in \mathcal{F} \), we define
\[
R^*(\mathcal{F}, \mathcal{X})_X :=
\sup_{f \in \mathcal{F}} R\big(\mathcal{F}_{\mathrm{data}}(f)\big)_X, 
\quad
R^*_m(\mathcal{F})_X :=
\inf_{\mathcal{X} \subset \Omega,\, |\mathcal{X}| = \tilde m}
R^*(\mathcal{F}, \mathcal{X})_X,
\]
and similar expressions will work for the boundary data \( g \) and obstacle data \( \psi \). The recovery performance in each case depends explicitly on the sampling configuration and the norm used to quantify the error.

\subsection{Optimal recovery of $u$}

We now analyze the information-theoretic recovery limits for both formulations introduced earlier.
The first corresponds to the scalar solution $u$ of problem~\eqref{1}, whereas the second concerns the coupled pair ($u,\lambda)$ solving the system~\eqref{prob2}.

\subsubsection{Optimal recovery of the primal solution $u$}
This section investigates the best achievable accuracy for approximating solutions \( u \in \mathcal{U} \) within the Sobolev space \( H^1(\Omega) \). Consider the boundary value problem \eqref{1}, where the available information is given through a total of \( m = \tilde{m} + \bar{m} \) sampling points. We assume that the smoothness parameters satisfy \( s > d/p \), \( \bar{s} > d/\bar{p} \), and \( \check{s} > d/\check{p} \) which ensures that the corresponding Besov spaces are continuously embedded into spaces of continuous functions. On the basis of these smoothness conditions, we introduce the following data classes, \( \F := U(B^s_{pq}(\Omega)) \), \( \G := \operatorname{Tr}(U(B^{\bar{s}}_{\bar{p}\bar{q}}(\Omega))) \), and \( \H :=  U(B^{\check{s}}_{\check{p}\check{q}}(\Omega))\). Consequently, the family of admissible solutions is defined as
\[
\U := \left\{ u \in C(\Omega) : u \text{ satisfies \eqref{1} for some  } f \in \F,\, g \in \G,\, \psi\in \H \right\}.
\]
We now present a theorem that characterizes the optimal recovery rate associated with the class \( \mathcal{U} \). For simplicity of exposition and without loss of generality, we present the optimal recovery analysis on the unit cube $\Omega = (0,1)^d$, the results extend to general bounded Lipschitz domains by standard localization arguments.
\begin{theorem}
	Let \( \Omega = (0,1)^d  \), and let  \( \F, \G, \H \), be the data classes and \(\U \) the corresponding solution class defined above. Then the optimal approximation rate for recovering \(u\in \U\) in the \( H^1(\Omega) \) norm satisfies
	\begin{align}
		R^*_m(\U)_{ H^1(\Omega)} \asymp m^{-\min(a,b,c)},
	\end{align}
	where $a,b,$ and $c$ denote the optimal recovery exponents associated with $f,g,$ and $h,$ respectively.
\end{theorem}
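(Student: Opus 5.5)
Our plan is to prove matching upper and lower bounds separately, reducing the recovery of $u$ to the recovery of the three data functions through the two-sided stability estimate \eqref{mainestimate}. We take $a$, $b$, $c$ to be the known optimal recovery exponents of the data classes in the relevant norms (as in \cite{BDS,BVPS}): $R^*_{m}(\F)_{H^{-1}(\Omega)}\asymp m^{-a}$, $R^*_{m}(\G)_{H^{1/2}(\partial\Omega)}\asymp m^{-b}$ and $R^*_{m}(\H)_{H^{1}(\Omega)}\asymp m^{-c}$. Here $a=s/d+1/d-(1/p-1/\gamma')_+$-type expressions come from the embedding $L^\gamma\hookrightarrow H^{-1}$, the boundary exponent is computed in dimension $d-1$, and the obstacle exponent comes from recovery in $H^1$. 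We will use these rates as black boxes.

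\textbf{Upper bound.} We split the budget so that $\tilde m\asymp\bar m\asymp m$. Inside $\Omega$ we place $\mathcal{X}$ as the near-optimal point set for $f$ and $\psi$ simultaneously; a common quasi-uniform grid serves both, since both optimal designs on the cube are quasi-uniform. On $\partial\Omega$ we place $\mathcal{Y}$ as the optimal set for $g$. Take $u=u(f,\psi,g)$ and $\tilde u=u(\tilde f,\tilde\psi,\tilde g)$ in $\U_{\rm data}$. The obstacle map is Lipschitz in the data: testing the two variational inequalities with each other's solutions (after lifting the boundary data and shifting the obstacles) gives
\[
\|u-\tilde u\|_{H^1(\Omega)}\le C\big(\|f-\tilde f\|_{-1}+\|g-\tilde g\|_{H^{1/2}(\partial\Omega)}+\|\psi-\tilde\psi\|_{H^1(\Omega)}\big).
\]
This is the difference version of \eqref{mainestimate}, and the same argument underlies Theorem~\ref{thm_2.1}. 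The data differences lie in the corresponding data-consistent sets, so each term is bounded by twice the corresponding recovery radius. Hence $\mathrm{diam}(\U_{\rm data})\lesssim m^{-a}+m^{-b}+m^{-c}\lesssim m^{-\min(a,b,c)}$, and $R\le\mathrm{diam}$ gives the upper bound.

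\textbf{Lower bound.} Here we construct fooling pairs for each data type separately, with the other two data fixed. For $f$, we choose a bump $\eta\in\F$ vanishing on $\mathcal{X}$ with $\|\eta\|_{-1}\gtrsim m^{-a}$, take a very low obstacle $\psi\equiv -M$ (scaled to lie in $\H$) so that the constraint is inactive, and set $g=0$. Then $u(\pm\eta)$ solve Poisson's equation and satisfy $\|u(\eta)-u(-\eta)\|_{H^1}\gtrsim\|\eta\|_{-1}$, so any recovery error is $\gtrsim m^{-a}$. We argue the same way for $g$ with a trace bump in $H^{1/2}$, again with the constraint inactive. For $\psi$, we take $f\le 0$ strong enough that the contact set is large, or $\psi$ positive with $g$ small, so that $u=\psi$ on a region. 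Adding a bump $\eta\in\H$ supported in that region and vanishing on $\mathcal{X}$ changes $u$ by $\eta$ there, giving an $H^1$ separation $\gtrsim m^{-c}$. This requires the lower bound in \eqref{mainestimate} for the obstacle component.

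\textbf{Main obstacle.} The hardest part is the obstacle lower bound. We need the perturbed obstacle to remain in contact, so that $u=\psi+\eta$ near the bump. We will first try to arrange this by choosing $f$ very negative and smooth, which forces coincidence on a fixed interior ball $B$. Nonnegative bumps $\eta$ supported in $B$, with $\psi+\eta$ still in the unit ball after scaling, then give solutions equal to $\psi+\eta$ on $B$. The bumps can be taken nonnegative because the Besov-ball lower-bound constructions use sums of positive bumps in the empty cells between sample points. A secondary subtlety is that the data scalings must all fit simultaneously in the unit balls; this only affects constants.
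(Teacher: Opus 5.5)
\noindent Your upper bound is the paper's argument: one-sided Lipschitz stability, followed by the data recovery rates of \cite{BVPS} on a common interior grid. Your fooling pairs for $f$ and $g$, with a constant and inactive obstacle, are sound. On the lower bound you are more careful than the paper, which simply reads \eqref{3.23} as a two-sided equivalence. That reverse inequality is not available in general: perturbing $\psi$ away from the contact set while keeping it below $u$ leaves $u$ unchanged. Your obstacle step is exactly where that reverse estimate is needed, and it does not go through as proposed.

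\noindent The failing claim is that nonnegative bumps $\eta$ supported in $B\subset\{u=\psi\}$ give $u(\psi+\eta)=\psi+\eta$ on $B$. Since $\operatorname{supp}\eta\subset B$, the candidate $u+\eta$ solves the perturbed problem if and only if the new multiplier $\lambda-\Delta\eta=-\Delta\psi-f-\Delta\eta$ is nonnegative on $B$, i.e.\ $\Delta\eta\le\lambda$ there.

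\noindent You cannot secure this by taking $f$ ``very negative''. We have $f\in\F=U(\BB)$ with $\BB\hookrightarrow C(\overline\Omega)$, so $\|f\|_{L^\infty}\lesssim 1$, and $\psi$ is likewise confined to a unit ball. By contrast, a fooling bump for $H^1$-recovery of $\H$ at scale $h\asymp\tilde m^{-1/d}$ has height $\epsilon\asymp h^{\check s-d/\check p}$ and $\max\Delta\eta\asymp\epsilon h^{-2}$. This is unbounded as $h\to 0$ unless $\check s\ge 2+d/\check p$. Nonnegativity of $\eta$ does not help, because $\int\Delta\eta=0$ forces $\Delta\eta>0$ somewhere on the support. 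Outside that high-smoothness regime the solution detaches from $\psi+\eta$ wherever $\Delta\eta>\lambda$.

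\noindent What survives is the comparison fact that $w:=u(\psi+\eta)-u(\psi)\in H^1_0(\Omega)$ and $w\ge\eta$ on the old contact set. If $\eta\ge\epsilon/2$ on a ball $B_{h/2}$, this gives $\|\nabla w\|_{L^2}^2\ge(\epsilon/2)^2\,\mathrm{cap}(B_{h/2};\Omega)$.
\begin{itemize}
\item For $d\ge3$ this is $\asymp\epsilon^2h^{d-2}\asymp\|\nabla\eta\|_{L^2}^2$, which repairs your argument without any contact persistence.
\item For $d=2$ the capacity is only $\asymp 1/\log(1/h)$, so you obtain $\tilde m^{-c}/\sqrt{\log\tilde m}$ rather than $\tilde m^{-c}$.
\end{itemize}

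\noindent As written, the obstacle part of the lower bound, and hence the two-sided $\asymp$, is therefore not established on the full parameter range. To close it you need either this comparison/capacity argument, with its loss in $d=2$, or an additional hypothesis such as $\check s\ge 2+d/\check p$.
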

\begin{proof}
	Let \( \X\) and \(\Y\) be the sets of sampling locations for the interior and boundary data, respectively, with cardinalities \( |\X| = \tilde m \) and \( |\Y| = \bar m \), so that the total number of location points \( m = \tilde m + \bar m \). For any pair of solutions \( u_1, u_2 \in \U_{\text{data}} \) corresponding to available data values \( f_i \in \F_{\text{data}},\, g_i \in \G_{\text{data}},\, \psi_i \in \H_{\text{data}} \), the stability bound \eqref{mainestimate} implies
	\begin{align}\label{3.23}
		\|u_1 - u_2\|_{H^1(\Omega)} \asymp \|f_1 - f_2\|_{-1}+ \|g_1 - g_2\|_{H^{1/2}(\partial\Omega)} + \|\psi_1-\psi_2\|_{H^{1}(\Omega)}.
	\end{align}
	Hence, the optimal recovery performance for the entire class \(\U\) can be related to that of the data spaces
	\begin{align}
		R^*(\U, \X, \Y)_{H^1(\Omega)} \asymp R^*(\F, \X)_{H^{-1}(\Omega)} + R^*(\G, \Y)_{H^{1/2}(\pot)} + R^*(\H, \X)_{H^{1}(\Omega)}.
	\end{align}
	Let us assume the sample budget is distributed comparably between the interior and boundary measurements, i.e., \( \tilde m \asymp \bar m \), we use~\cite[Theorems~3.1, 3.2]{BVPS}.
	\begin{align*}
		R^*_m(\U)_{H^1(\Omega)} \lesssim \tilde m^{-a} + \bar m^{-b} + \bar m^{-c} \lesssim m^{-\min(a, b, c)}. 
	\end{align*}
	Conversely, from the same theorems we derive
	\begin{align*}
		m^{-\min{(a,b,c)}}&\lesssim \inf_{m = \tilde{m}+ \bar{m}} \left(\tilde{m}^{-a} + \bar{m}^{-b} + \bar{m}^{-c} \right)\\
		&\lesssim \inf_{m = |\X| + |\Y|} R^*(\U, \X, \Y)_{H^1(\Omega)} = R^*_m(\U)_{H^1(\Omega)},
	\end{align*}
	which yields the claimed equivalence and completes the argument.
\end{proof}

\subsubsection{Optimal recovery of the mixed pair $(u,\lambda)$}
We now extend the optimal recovery analysis to the mixed formulation of the obstacle problem defined in~\eqref{prob2}. Let us define the solution space
\begin{align}
	\U_M:= \{(u,\lambda)\in W\times H^{-1}(\Omega): (u,\lambda) \text{ satisfy } \eqref{prob2} \text{ for some } f\in \F, g\in \G, \psi \in \H  \}
\end{align}
Under the assumptions \( s > d/p \), \( \bar{s} > d/\bar{p} \), and \( \check{s} > d/\check{p} \) the embeddings 
$\mathcal{F} \hookrightarrow C(\Omega) \subset H^{-1}(\Omega) , \
\mathcal{H} \hookrightarrow C(\Omega) \subset H^{1}(\Omega), $ and $
\mathcal{G} \hookrightarrow C(\partial\Omega) \subset H^{1/2}(\partial\Omega),$
are compact, which ensure that $\U_M$ is compact in the product norm
\[\|(u,\lambda)\|^2_{U} = \|u\|^2_{H^1(\Omega)}+\|\Delta u+\lambda\|^2_{-1}. \]
For each admissible sampling configuration $\X\subset\Omega$ and $\Y\subset\pot$ with cardinalities $|\X| = \tilde m$ and $|\Y| = \bar m$ we define the feasible data sets $\F_{\text{data}},\, \G_{\text{data}},\, \H_{\text{data}},$
as in the previous section, and the corresponding feasible solution class
\begin{align*}
	\mathcal{U}_{M,\text{data}} := \{ (u,\lambda)\in \mathcal{U}_M : u,\lambda \text{ consistent with } (\F_{\text{data}},\G_{\text{data}},\H_{\text{data}})\}.
\end{align*}
From the stability estimate~\eqref{5} we have, for any $(u_1,\lambda_1), (u_2,\lambda_2) \in \mathcal{U}_{M,\text{data}}$, the following estimate holds
\begin{align}
	\|(u_1,\lambda_1)- (u_2,\lambda_2) \|_U\lesssim \|f_1-f_2\|_{-1}+\|g_1-g_2\|_{H^{1/2}(\pot)} + \|\psi_1-\psi_2\|_{H^1(\Omega)}.
\end{align} 

\begin{theorem}
	Let $\Omega \subset \mathbb{R}^d$ be a bounded domain. Let $(u_1, \lambda_1) \in U$ be the solution to the obstacle problem corresponding to data $(f_1, g_1, \psi_1)$ and $(u_2, \lambda_2) \in U$ be the solution corresponding to data $(f_2, g_2, \psi_2)$.
	For the given solution space $U$, the following estimate holds
	\begin{align}\label{main3.1}
		\|(u_1,\lambda_1)- (u_2,\lambda_2) \|_U\lesssim \|f_1-f_2\|_{-1}+\|g_1-g_2\|_{H^{1/2}(\pot)} + \|\psi_1-\psi_2\|_{H^1(\Omega)}.
	\end{align} 
\end{theorem}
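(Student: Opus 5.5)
The estimate \eqref{main3.1} has two parts. The $U$-norm of the difference is
\[
\|(u_1,\lambda_1)-(u_2,\lambda_2)\|_U^2=\|u_1-u_2\|_{H^1(\Omega)}^2+\|\Delta(u_1-u_2)+(\lambda_1-\lambda_2)\|_{-1}^2 ,
\]
so I bound the primal term and the residual term separately and then add the two bounds.

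\textbf{The residual term is algebraic.} Each pair solves \eqref{prob2}, so $-\Delta u_k-\lambda_k=f_k$ in $H^{-1}(\Omega)$ for $k=1,2$. Subtracting gives
\[
\Delta(u_1-u_2)+(\lambda_1-\lambda_2)=-(f_1-f_2),
\]
and hence
\[
\|\Delta(u_1-u_2)+(\lambda_1-\lambda_2)\|_{-1}=\|f_1-f_2\|_{-1}.
\]
No stability theory is needed for this part, only linearity of the equation that defines $\lambda$.

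\textbf{The primal term carries the difficulty.} I need the Lipschitz bound
\[
\|u_1-u_2\|_{H^1(\Omega)}\lesssim \|f_1-f_2\|_{-1}+\|g_1-g_2\|_{H^{1/2}(\pot)}+\|\psi_1-\psi_2\|_{H^1(\Omega)} .
\]
The estimate \eqref{mainestimate} gives only an a priori bound, not a difference bound, so I prove the difference bound directly in three steps.

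First, I remove the boundary data. Let $G_k\in H^1(\Omega)$ be a bounded harmonic (or otherwise bounded) extension of $g_k$, with $\|G_1-G_2\|_{H^1}\lesssim\|g_1-g_2\|_{H^{1/2}}$. Set $z_k=u_k-G_k\in H^1_0(\Omega)$. Then $z_k$ solves an obstacle problem with zero boundary values, obstacle $\psi_k-G_k$ and load $f_k+\Delta G_k\in H^{-1}$.

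Second, I remove the difference in obstacles by shifting. Put $\chi_k=\psi_k-G_k$, and write $\chi_k^+$ for a modification that is admissible on the boundary. Then use the test functions $v_1=z_2+(\chi_1-\chi_2)$ in the variational inequality for $z_1$ and $v_2=z_1-(\chi_1-\chi_2)$ in the one for $z_2$. These are admissible because they stay above the respective obstacles. Here I need $\chi_1-\chi_2\in H^1_0$. If it is not, I replace it by $\max(\chi_1-\chi_2,\,\cdot)$, cut off near the boundary, using the compatibility $\psi_k\le g_k$ on $\pot$. This is the technical point I expect to be the main obstacle. A cleaner alternative is to first invoke the standard Lipschitz dependence result for obstacle problems from \cite{rodrigues1987obstacle}, which is the source of \eqref{mainestimate}.

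Third, I combine. Adding the two inequalities gives
\[
\|\nabla(z_1-z_2)\|^2\le \langle \tilde f_1-\tilde f_2,\,z_1-z_2-(\chi_1-\chi_2)\rangle+\big(\nabla(z_1-z_2),\nabla(\chi_1-\chi_2)\big).
\]
Cauchy--Schwarz, Young and Poincaré then yield
\[
\|z_1-z_2\|_{H^1}\lesssim\|\tilde f_1-\tilde f_2\|_{-1}+\|\chi_1-\chi_2\|_{H^1}.
\]
Undoing the substitutions, with $\|\Delta(G_1-G_2)\|_{-1}\lesssim\|G_1-G_2\|_{H^1}$, gives the primal bound.

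\textbf{Conclusion.} Squaring both bounds, adding them and taking the square root gives \eqref{main3.1}. I also note that Theorem~\ref{thm_2.1} provides the same coercive structure $\langle\nu,w\rangle$ plus residual. So for a fixed obstacle and boundary datum, one could instead plug $(u_2,\lambda_2)$ into the functional $J$ of problem~1, using that $J(u_2,\lambda_2)=\|f_1-f_2\|_{-1}^2$, and apply \eqref{5}. The perturbed-obstacle case still requires the shifting argument above.
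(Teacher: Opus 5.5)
\textbf{Gap: the Lipschitz bound for the primal term is false in the case you leave open.} Your splitting of the $U$-norm and your treatment of the residual term are exactly the paper's argument. Your sign, $\Delta(u_1-u_2)+(\lambda_1-\lambda_2)=-(f_1-f_2)$, is the correct one; the paper flips it, which is harmless.

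For the primal term the paper gives no argument beyond quoting \eqref{3.23} as a consequence of \eqref{mainestimate}. You are right that an a priori bound for a nonlinear problem says nothing about differences of solutions, so the paper's proof has a hole at exactly this point.

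Your translation argument with $v_1=z_2+(\chi_1-\chi_2)$ and $v_2=z_1-(\chi_1-\chi_2)$ is correct. It yields the Lipschitz bound whenever $\chi_1-\chi_2\in H^1_0(\Omega)$, that is, whenever $\psi_1-\psi_2=g_1-g_2$ on $\partial\Omega$. Your remark via \eqref{5} also correctly covers pure $f$-perturbations.

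The remaining case $\chi_1-\chi_2\notin H^1_0(\Omega)$ is not a technicality. No $\max$/cut-off construction and no citation can close it, because the estimate fails there. A cut-off of width $\delta$ applied to a boundary mismatch of size $\varepsilon$ costs about $\varepsilon\,\delta^{-1/2}$ in $H^1$, and the true solution pays exactly this price through a boundary layer.

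\textbf{Counterexample.} Take $f_1=f_2=-M$ with $M>0$, $g_1=g_2=0$, $\psi_2=0$ and $\psi_1=-\varepsilon$ with $\varepsilon$ small. Both obstacles are compatible, since $\psi_k\le g_k$ on $\partial\Omega$. Work on the unit ball, or on an interval.

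Then $u_2=0$. The solution $u_1$ is radial and equals $-\varepsilon$ on a ball $B_\rho$. In the layer $\rho<|x|<1$ it solves $\Delta u_1=M$ with $C^1$ fit at $|x|=\rho$, and the layer has width $1-\rho\simeq\sqrt{2\varepsilon/M}$. In one dimension the formula is explicit: $u_1(x)=\tfrac M2(x-a)^2-\varepsilon$ on $(0,a)$ with $a=\sqrt{2\varepsilon/M}$, and symmetrically at the other end. Cauchy--Schwarz along rays across the layer then gives
\[
\|u_1-u_2\|_{H^1(\Omega)}\ \ge\ \|\nabla u_1\|_{L^2(\Omega)}\ \gtrsim\ M^{1/4}\varepsilon^{3/4},
\qquad
\|f_1-f_2\|_{-1}+\|g_1-g_2\|_{H^{1/2}(\partial\Omega)}+\|\psi_1-\psi_2\|_{H^1(\Omega)}\ \simeq\ \varepsilon .
\]
The ratio of the left-hand side of \eqref{main3.1} to its right-hand side therefore blows up like $\varepsilon^{-1/4}$.

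Adding the constant $\varepsilon$ to $u_1$, $g_1$ and $\psi_1$ gives an example with equal obstacles and $g_1-g_2=\varepsilon$. So the boundary-data part of \eqref{main3.1} fails as well, which is precisely why your $\chi_1-\chi_2$ is not in $H^1_0$ in that situation too.

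\textbf{What is actually proved.} Neither your proposal nor the paper's appeal to \eqref{3.23} can establish \eqref{main3.1} as stated; in this example the solution map is only H\"older continuous with exponent $3/4$. What your argument does prove, and the paper does not, is \eqref{main3.1} under the extra compatibility hypothesis $(\psi_1-\psi_2)|_{\partial\Omega}=g_1-g_2$. Without that hypothesis only a H\"older-type dependence on the data can be expected.
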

\begin{proof}
	We want to bound the norm of the difference $\|{(u_1, \lambda_1) - (u_2, \lambda_2)}\|_U^2$. By definition of the norm, we have
	\begin{equation} \label{unorm_def}
		\|{(u_1, \lambda_1) - (u_2, \lambda_2)}\|_U^2 = \|{u_1 - u_2}\|_{H^1(\Omega)}^2 + \|{\Delta(u_1 - u_2) + (\lambda_1 - \lambda_2)}\|_{-1}^2
	\end{equation}
	We proceed by bounding the two terms on the right-hand side separately. As shown in~\cite[Chapter~I, Section~3]{haslinger1996numerical}, if $(u,\lambda)$ is the solution of problem \ref{prob2}, then the corresponding $u$ is the solution of \eqref{1}. Thus, from \eqref{3.23}, we get the upper bound for the norm
	\begin{align}\label{thm3.1_1}
		\|u_1 - u_2\|_{H^1(\Omega)} \leq C_P\left(\|f_1 - f_2\|_{-1} + \|g_1 - g_2\|_{H^{1/2}(\partial\Omega)} + \|\psi_1-\psi_2\|_{H^{1}(\Omega)}\right).
	\end{align}
	Since both the solutions $(u_1, \lambda_1)$ and $(u_2, \lambda_2)$ satisfies \eqref{prob2}. Thus we get
	\[\Delta u_1 +\lambda_1 = f_1 \ \text{and} \ \Delta u_2 +\lambda_2 = f_2. \]
	Therefore, we get the result from the above equations
	\begin{align}
		\Delta (u_1- u_2 ) +\lambda_1 -\lambda_2 = f_1 -f_2.
	\end{align}
	Now after taking the $H^{-1}(\Omega)$ norm both sides, we get
	\begin{align}\label{thm3.1_2}
		\|\Delta (u_1- u_2 ) +(\lambda_1 -\lambda_2)\|_{-1} = \|f_1 -f_2\|_{-1}
	\end{align}
	After putting the estimates \eqref{thm3.1_1} and \eqref{thm3.1_2} in \eqref{unorm_def}, we get
	\begin{align*}
		\|{(u_1, \lambda_1) - (u_2, \lambda_2)}\|_U^2 &\leq C^2_P\left(\|f_1 - f_2\|^2_{-1}+ \|g_1 - g_2\|^2_{H^{1/2}(\partial\Omega)} + \|\psi_1-\psi_2\|^2_{H^{1}(\Omega)}\right) \\&\hspace{4cm}+ \|f_1 - f_2\|^2_{-1}\\
		&\hspace{-0.2cm}\leq C \left(\|f_1 - f_2\|^2_{-1} + \|g_1 - g_2\|^2_{H^{1/2}(\partial\Omega)} + \|\psi_1-\psi_2\|^2_{H^{1}(\Omega)}\right).
	\end{align*}
	Thus, we get the final stability estimate
	\begin{align*}
		\|{(u_1, \lambda_1) - (u_2, \lambda_2)}\|_U \lesssim \left(\|f_1 - f_2\|_{-1} + \|g_1 - g_2\|_{H^{1/2}(\partial\Omega)} + \|\psi_1-\psi_2\|_{H^{1}(\Omega)}\right).
	\end{align*}
\end{proof}
Consequently, the optimal recovery rate for the mixed pair satisfies 	
\begin{align}
	R^*_m(\U)_{ H^1(\Omega)} \asymp m^{-\min(a,b,c)},
\end{align}
where $a,b,c$ denote the recovery exponents associated with $f,g,\psi,$ respectively.
The rate coincides with that of the scalar formulation, reflecting that both $u$ and $(u,\lambda)$ share identical smoothness conditions from the data spaces $\F,\G,\H$.

\section{Norm discretization}\label{Sec_5}
To obtain a fully computable loss functional, we replace the continuous norms appearing in the theoretical loss by discrete counterparts constructed from pointwise evaluations at collocation points. These discrete norms are shown to be equivalent to the corresponding continuous norms over the admissible Besov classes, up to the optimal recovery rate.
\subsection{$L^\tau(\Omega)$ norm discretization}\label{sec_5.1}
Let $\Omega = (0,1)^d$ and let $f\in B^s_{pq}(\Omega)$ with $s>d/p$. Consider a uniform grid $G_{k,r}\subset \Omega$ for $\tilde m = |G_{k,r}| \simeq (r2^k)^d$ and $r>\max\{s,1\}$. Define the discrete $L^\tau$ norm for $\tau\in[1,\infty)$by
\begin{align}\label{4.1}
	\|f\|^*_{L^\tau} =\left[ \frac{1}{\tilde{m}}\sum_{i=1}^{\tilde m} |f(x_i)|^\tau\right]^{\frac{1}{\tau}},
\end{align}
with supremum norm for $\tau=\infty$. Let $S_k^*(f)$ denote the simplicial polynomial interpolant of $f$ based on the grid $G_{k,r}$. From \cite{BVPS}, operator $S_k^*(f)$ achieves the optimal recovery rate in $L^\tau$ as
\begin{align}\label{L1}
	\|f-S_k^*(f)\|_{L^\tau(\Omega)} \lesssim \|f\|_{B^s_{pq}(\Omega)} \tilde{m}^{-\alpha_{\tau}},\quad \alpha_\tau = \frac{s}{d}-\left(\frac{1}{p}-\frac{1}{\tau}\right)_+.
\end{align}
Moreover, when evaluated at discrete data sites, the norm of the simplicial interpolant coincides with the norm defined over $f$ given by $\|S_k^*(f)\|_{L^\tau(\Omega)} \simeq \|f\|_{L^\tau(\Omega)}^{*}.$ Combining these estimates yields the norm equivalence
\begin{align*}
	\|f\|_{L^\tau(\Omega)} \lesssim \|f\|_{L^\tau(\Omega)}^{*}
	+ \|f\|_{B^s_{pq}}\,\tilde m^{-\alpha_\tau},
	\quad
	\|f\|_{L^\tau(\Omega)}^{*} \lesssim \|f\|_{L^\tau(\Omega)}
	+ \|f\|_{B^s_{pq}}\,\tilde m^{-\alpha_\tau}.
\end{align*}
Thus, the discrete norm approximates the continuous $L^\tau(\Omega)$ norm up to the optimal recovery error.
\begin{remark}\label{rm1}
	In a similar way, one can construct a discrete approximation of the $H^{1}(\Omega)$ norm based on simplicial interpolation.  
	If $f \in B^{s}_{pq}(\Omega)$ with $s > d/p$ and $p\in(0,2]$, then the corresponding discrete norm $\|f\|_{H^{1}(\Omega)}^{*}$ satisfies
	\begin{align}\label{H1}
		\|f\|_{H^{1}(\Omega)} \;\simeq\; \|f\|_{H^{1}(\Omega)}^{*}
		\;+\; \|f\|_{B^{s}_{pq}}\,\tilde m^{-\alpha{'}},
		\quad
		\alpha{'} := \frac{s-1}{d} - \Bigl(\frac{1}{p}-\frac{1}{2}\Bigr)_+ .
	\end{align}
\end{remark}
\subsection{$H^{1/2}(\partial\Omega)$ norm discretization}
In this section, we introduce the discrete $H^{1/2}(\partial\Omega)$ norm and analyze their accuracy in approximating the continuous $H^{1/2}(\partial\Omega)$ norm for functions $g$ belonging to the model class $\G = \operatorname{Tr}(U(\bar \BB))$. The discrete norm is constructed solely from point evaluations of $g$
at boundary data sites and is therefore computable from the given data.

Let $\Y = \{x_j\}_{j=1}^{\bar m}$ be the boundary data sites, with $\bar m \simeq 2^{k(d-1)}$. For a function $g$ defined on $\Y$, the discrete $H^{1/2}(\partial\Omega)$ seminorm is defined by the averaged pairwise difference quotient
\begin{align}
	|g|^*_{H^{1/2}(\partial\Omega)} := \left(\frac{1}{\bar m^2} \sum_{i\neq j}^{}\frac{|g(x_i)-g(x_j)|^2}{|x_i-x_j|^d}\right)^{\frac{1}{2}}.
\end{align}
The corresponding discrete $H^{1/2}(\pot)$ norm is given by
\begin{align}
	\|g\|^*_{H^{1/2}(\pot)} := \|g\|^*_{L^2(\pot)} + |g|^*_{H^{1/2}(\pot)},
\end{align}
where discrete $L^2(\pot)$ is defined as in \eqref{4.1} for $\tau=2$. Next, theorem shows that the discrete $H^{1/2}(\pot)$ norm approximates the continuous norm up to the optimal recovery rate.

\begin{theorem}\cite[Theorem~6.4]{BVPS}
	Let $\G = \operatorname{Tr}(U(\bar{\BB}))$, where $\bar{\BB} = B^{\bar s}_{\bar p\bar q}(\Omega)$ with $\bar s>d/2$, and let $\beta = \bar s-1/d-1$. Then for each $g\in\G$ and any $\bar m\geq 1$, the continuous and discrete $H^{1/2}(\pot)$ norms satisfy
	\begin{align*}
		\|g\|_{H^{1/2}(\pot)} &\lesssim \|g\|_{H^{1/2}(\pot)}^{*}
		+ \|g\|_{\bar{\BB}}\,\bar m^{-\beta},
		\\
		\|g\|_{H^{1/2}(\pot)}^{*} &\lesssim \|g\|_{H^{1/2}(\pot)}
		+ \|g\|_{\bar{\BB}}\,\bar m^{-\beta}.
	\end{align*}
\end{theorem}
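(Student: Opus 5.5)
The proof mirrors the $L^\tau$ argument of Section~\ref{sec_5.1}: compare $g$ with a piecewise polynomial interpolant built only from the boundary data. Take $\Y$ to be a uniform grid on each face of $\pot$, with spacing $h\simeq 2^{-k}$ and $\bar m\simeq 2^{k(d-1)}$. Let $\bar S_k g$ be the continuous simplicial interpolant of $g$ on the induced triangulation of $\pot$; it is well defined because $g\in\G$ is continuous. The discrete quantities are then compared with norms of $\bar S_k g$, and $\bar S_k g$ with $g$. The triangle inequality in $H^{1/2}(\pot)$ gives
\[
\bigl|\,\|g\|_{H^{1/2}(\pot)}-\|\bar S_k g\|_{H^{1/2}(\pot)}\bigr|\le \|g-\bar S_k g\|_{H^{1/2}(\pot)},
\]
so both stated inequalities follow from two facts:
\[
\text{(i)}\ \ \|g-\bar S_k g\|_{H^{1/2}(\pot)}\lesssim \|g\|_{\bar\BB}\,\bar m^{-\beta},
\qquad
\text{(ii)}\ \ \|\bar S_k g\|_{H^{1/2}(\pot)}\simeq \|g\|^*_{H^{1/2}(\pot)}.
\]

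For (i), I would first apply the trace theorem for Besov spaces. It places $g$ in $B^{\bar s-1/\bar p}_{\bar p\bar q}(\pot)$ with norm bounded by $\|g\|_{\bar\BB}$, and this boundary space still embeds in $C(\pot)$ under the standing assumption on $\bar s$. Next I would bound the interpolation error in $L^2$ and in $H^1$ face by face, using the simplicial interpolation estimates of \cite{BVPS} in the form of \eqref{L1} and Remark~\ref{rm1}, now on the $(d-1)$-dimensional faces. Interpolating between the $L^2$ and $H^1$ bounds then gives the $H^{1/2}$ error. Converting powers of $h$ into powers of $\bar m$ via $h\simeq\bar m^{-1/(d-1)}$ should produce the exponent $\beta$. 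The lower-order term $\|g\|^*_{L^2(\pot)}$ versus $\|g\|_{L^2(\pot)}$ is handled exactly as in Section~\ref{sec_5.1} with $\tau=2$.

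For (ii), I would split the Gagliardo double integral defining $|\bar S_k g|_{H^{1/2}(\pot)}$ into pairs of cells. For well-separated cells $T_i,T_l$, $\bar S_k g$ is linear on each, so the integrand over $T_i\times T_l$ is comparable to $|g(x_i)-g(x_l)|^2/|x_i-x_l|^d$ times $|T_i||T_l|\simeq \bar m^{-2}$; this reproduces the discrete double sum. The near-diagonal pairs (same or neighbouring cells) are absent from the discrete sum. They are bounded by $h\,|\bar S_k g|^2_{H^1}$ restricted to those cells, which is a local squared nodal difference. Such a term is in turn dominated by the neighbour terms already present in the discrete sum, with $|x_i-x_l|\simeq h$. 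The reverse inequality follows by running the same comparison backwards, since the discrete sum of well-separated pairs is controlled by the continuous double integral over the same cells.

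The main obstacle is the near-diagonal and cross-face bookkeeping in (ii). The kernel $|x-y|^{-d}$ is singular, faces meet at edges of the cube, and the constants must stay uniform in $k$. I would first try to handle this by flattening $\pot$ locally, using its Lipschitz structure, and appealing to the standard equivalence of discrete and continuous Sobolev–Slobodeckij seminorms for continuous piecewise-linear functions on quasi-uniform meshes. If that equivalence holds only up to mesh-dependent terms, I would absorb them into the $\bar m^{-\beta}$ recovery error.
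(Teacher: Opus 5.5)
\textbf{Verdict: there is a genuine gap in step (i); step (ii) also needs a corrected far-field argument.} The paper gives no argument of its own here; it imports the result from \cite[Theorem~6.4]{BVPS}. Your architecture is the natural one and mirrors Section~\ref{sec_5.1}: pass through an interpolant $\bar S_k g$, bound $\|g-\bar S_k g\|_{H^{1/2}(\pot)}$ by the recovery rate, and show $\|\bar S_k g\|_{H^{1/2}(\pot)}\simeq\|g\|^*_{H^{1/2}(\pot)}$.

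\textbf{Step (i) fails with a piecewise linear interpolant.} On cells of size $h\simeq\bar m^{-1/(d-1)}$, piecewise linear interpolation saturates at $h^{3/2}$ in $H^{1/2}$. Take the rescaled trace of $x_1^2$ on a face. On every cell the interpolation error is the same non-constant quadratic shape of amplitude $h^2$. Summing the local Gagliardo integrals over $\sim h^{-(d-1)}$ cells gives $\|g-\bar S_k g\|_{H^{1/2}(\pot)}\gtrsim h^{3/2}$. The target rate, for $\bar p=2$ as $\bar s>d/2$ suggests, is $h^{\bar s-1}$. So (i) is false for $\bar s>5/2$.

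This is why the paper, following \cite{BVPS}, works on $G_{k,r}$ with $r>\max\{s,1\}$. You need $\Y=\overline G_{k,r}$ and $\bar S_k$ of degree $r-1$ with $r>\bar s$. Step (ii) then cannot rely on ``linear on each cell'':
\begin{itemize}
\item the discrete sum must run over all Lagrange nodes;
\item on each cell $T$ you need the scaling-invariant equivalence, on $\mathbb P_{r-1}$ modulo constants, between $h\,|S|^2_{H^1(T)}$, the local Gagliardo integral, and $h^{d-2}\sum_{a,b\in T}|S(x_a)-S(x_b)|^2$.
\end{itemize}

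\textbf{A narrower problem in (i).} Getting (i) by interpolating $L^2$ and $H^1$ error bounds requires the trace to lie in $H^1$ of each face. This fails for $d=2$ and $1<\bar s<3/2$, where the trace smoothness $\bar s-1/2$ is below $1$. There you need a direct $B^{1/2}_{22}$ bound on $g-\bar S_k g$. That is exactly the upper-bound half of the recovery theorem for $g$ in \cite{BVPS}, which you could simply cite.

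\textbf{The far-field step in (ii).} Cell by cell, the comparison holds only in one direction. The integral over $T_i\times T_l$ controls the nodal term, but not conversely. It also sees the oscillation of $\bar S_k g$ inside $T_i$ and $T_l$, so it can be positive when $g(x_i)=g(x_l)$. For the direction continuous $\lesssim$ discrete, write
\[
S(x)-S(y)=[S(x)-S(x_i)]+[S(x_i)-S(x_l)]+[S(x_l)-S(y)].
\]
Then charge the local oscillations to near-neighbour discrete terms, using
\[
\sum_{l:\,|x_l-x_i|\gtrsim h}h^{d-1}|x_i-x_l|^{-d}\lesssim h^{-1}.
\]
Done this way, (ii) holds with constants independent of $k$, and it has to. Your fallback of absorbing mesh-dependent constants into $\bar m^{-\beta}$ would not work, because a multiplicative factor cannot be turned into an additive error.

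\textbf{The exponent.} Your computation yields $(\bar s-1)/(d-1)$, which is consistent with \eqref{7.4}. It is not the printed $\beta=\bar s-1/d-1$, and the printed formula must be a typo. Apply the first inequality to some $g\in\G$ vanishing on $\Y$, so that $\|g\|^*_{H^{1/2}(\pot)}=0$. The optimal recovery lower bound then shows that no $\beta>(\bar s-1)/(d-1)$ is possible. The literal formula exceeds this value, for example when $d=3$ and $\bar s>5/3$, and for every admissible $\bar s$ when $d\ge 4$.
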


\subsection{Discrete approximation of the duality pairing}\label{sec_5.3}
The obstacle formulation involves the duality pairing
$\langle \lambda, u-\psi\rangle$, where
$\lambda \in H^{-1}(\Omega)$ and $(u-\psi)\in H^{1}(\Omega)$.
To obtain a fully computable loss functional, we construct a discrete
approximation of this pairing using only pointwise evaluations at interior
collocation points.

Let $X = \{x_i\}_{i=1}^{\tilde m} = G_{k,r} \subset \Omega$ be the interior grid
with $\tilde m \simeq (r2^k)^d$.
We define the discrete duality pairing by
\begin{align}\label{eq:discrete_duality_omega}
	\langle \lambda, u-\psi\rangle^{*}
	:= \frac{1}{\tilde m}\sum_{i=1}^{\tilde m}
	\lambda(x_i)\,(u(x_i)-\psi(x_i)).
\end{align}
This definition depends only on pointwise evaluations of $\lambda$, $u$, and
$\psi$ at the collocation points and is therefore directly computable from the
available data.
Using the equivalence of discrete $H^{1}(\Omega)$ norm established in the Remark~\ref{rm1} and the discrete $L^{\tau}(\Omega)$ norm defined in \eqref{4.1}, together with the Sobolev embedding $L^\tau(\Omega)\hookrightarrow H^{-1}(\Omega)$ for $\tau= 2d/(d+2)$, the discrete pairing \eqref{eq:discrete_duality_omega} can be viewed as
a quadrature approximation of the continuous duality product.  

	We now establish the relation between the continuous and discrete pairings. From H$\ddot{o}$lder’s inequality we get, 
	\begin{align}\label{5.5}
		|\langle \lambda, u-\psi\rangle|
		\le \|\lambda\|_{L^\tau(\Omega)}\,\|u-\psi\|_{L^{\tau'}(\Omega)}\le \|\lambda\|_{L^\tau(\Omega)}\,\|u-\psi\|_{H^1(\Omega)}.
	\end{align}
	Using H$\ddot{o}$lder’s inequality for the discrete norms, we obtain
	\begin{align*}
		|\langle \lambda, u-\psi\rangle^*|
		\le \|\lambda\|^*_{L^\tau(\Omega)}\,\|u-\psi\|^*_{L^{\tau'}(\Omega)}\le  \|\lambda\|^*_{L^\tau(\Omega)}\,\|u-\psi\|^*_{H^1(\Omega)}.
	\end{align*}
	where the discrete norms are defined as in Section~\ref{sec_5.1}. By the discrete-continuous norm equivalence for Besov-regular functions, it holds that
	\begin{align*}
		\|\lambda\|^*_{L^\tau(\Omega)} \lesssim \|\lambda\|_{L^\tau(\Omega)}
		+ \|\lambda\|_{B^s_{pq}(\Omega)}\,\tilde m^{-\alpha_\tau},
	\end{align*}
	and similarly for $\check{\BB} = B^{\check{s}}_{\check{p}\check{q}}(\Omega)$ with $\check{s} >d/\check{p}$, we get
	\begin{align*}
		\|u-\psi\|^{*}_{H^1(\Omega)} \lesssim \|u-\psi\|_{H^1(\Omega)}
		+ \|u-\psi\|_{B^{\check{s}}_{\check{p}\check{q}}(\Omega)}\,\tilde m^{-{\alpha'}},
	\end{align*}
	where $\alpha_\tau$ and $\alpha'$ are defined in \eqref{L1} and \eqref{H1} respectively. Combining the above estimates, we obtain
	\begin{align}\label{5.6}
		|\langle \lambda, u-\psi\rangle^*|\lesssim 	\|\lambda\|_{L^\tau(\Omega)} 	\|u-\psi\|_{H^1(\Omega)} + \Big(\|\lambda\|_{L^\tau(\Omega)}\|u-\psi\|_{\check{\BB}} + \|\lambda\|_{{\BB}}\|u-\psi\|_{H^1(\Omega)}\\\notag + \|\lambda\|_{{\BB}}\|u-\psi\|_{\check{\BB}}\Big) \tilde m^{-\min\{\alpha_\tau, \alpha'\}}.
	\end{align}
	After subtracting \eqref{5.5} and \eqref{5.6} and applying the triangle inequality, we obtain
	\begin{align}\label{5.7}
		\big|	|\langle \lambda, u-\psi\rangle|-	|\langle \lambda, u-\psi\rangle^* |\big| &\leq 	\Big(\|\lambda\|_{L^\tau(\Omega)}\|u-\psi\|_{\check{\BB}} + \|\lambda\|_{{\BB}}\|u-\psi\|_{H^1(\Omega)}\\&\notag\hspace{3cm} + \|\lambda\|_{{\BB}}\|u-\psi\|_{\check{\BB}}\Big) \tilde m^{-\min\{\alpha_\tau, \alpha'\}}\\
		&\lesssim\Big(\|\lambda\|_{L^\tau(\Omega)}^2+\|u-\psi\|^2_{\check{\BB}} + \|\lambda\|_{{\BB}}^2+\|u-\psi\|^2_{H^1(\Omega)}\notag \\&\notag\hspace{3cm} + \|\lambda\|_{{\BB}}^2+\|u-\psi\|^2_{\check{\BB}}\Big) \tilde m^{-\min\{\alpha_\tau,\alpha'\}}.\notag
	\end{align}
	For the given conditions $s>d/p$ and $\check s>d/\check p$, the corresponding Sobolev-Besov embeddings $B^s_{pq}(\Omega) \hookrightarrow L^\tau(\Omega)$ and $B^{\check{s}}_{\check{p}\check{q}}(\Omega) \hookrightarrow H^1(\Omega) $ holds respectively. Thus, from \eqref{5.7}, we get
	\begin{align}\label{5.8}
		\big|	|\langle \lambda, u-\psi\rangle|-	|\langle \lambda, u-\psi\rangle^* |\big| 
		\lesssim\left( \|\lambda\|_{{\BB}}^2+\|u-\psi\|^2_{\check{\BB}}\right) \tilde m^{-\min\{\alpha_\tau,\alpha'\}}.
	\end{align}
	Therefore, the discrete duality pairing provides a consistent approximation of the continuous pairing, with an error controlled by the Besov norms of $\lambda$ and $u-\psi$.
	
	\subsection{Discrete residual control}
	To render the theoretical loss functional \( \L_M \), which involves continuous norms, practically computable, we replace it with a discrete analogue \( \L^c_M\). This surrogate relies solely on the sampled evaluations of the data functions \( (f, g, \psi) \). Let \( k\) and \(r\) denote the discretization parameters. We define the sets of interior sampling points and boundary sampling points by \( \X = G^{}_{k,r} \subset \Omega \), \( \Y = \overline{G}^{}_{ k,r} \subset \pot \) and let \( \tilde{m} = \#(\X) \) and \( \bar{m} = \#(\Y) \) represent their respective cardinalities.
	Under the assumptions placed on the function spaces $\{\F, \G, \H\}$, the solution \( u \) to equation \eqref{1} is guaranteed to lie within a composite approximation class \( \mathcal{U} \). The best possible rate at which $u$ can be recovered from data is then governed by the expression $\max \left\{ \tilde m^{-s/d}, \, \bar m^{-(\bar{s} - 1)/(d-1)}, \,\tilde m^{-(\check{s} - 1)/d}\right\}$. For any function $(v,\mu)$, we introduce the norm $\|(v,\mu)\|_{\mathcal{U}} := \max \left\{ \|\Delta v+\mu\|_{\BB}, \, \|\operatorname{Tr}(v)\|_{\operatorname{Tr}(\bar \BB)}, \, \|v\|_{\check\BB},\, \|\mu\|_{\BB} \right\}$. The following theorem provides an upper bound for the error \( \|u - v\|_{H^1(\Omega)} \) based on the discrete loss functional \( \L^{c}_M(v) \). This result holds under the regularity assumptions that \( \Delta v+ \mu \in \BB\) with \(v \in \overline\BB \), and \( v \in \check\BB \).
	
	\begin{theorem}
		Let \( (u, \lambda) \) be the exact solution of the obstacle problem \eqref{prob2}, where \( f \in \mathcal{F} = \mathcal{U}(\BB) \), \( g \in \mathcal{G} \), and obstacle \( \psi \in \mathcal{H} \), as described in \eqref{3.24}. Assume that observations are available at discrete interior and boundary sites \( G^{}_{k,r} \) and \( \overline{G}_{ k,r} \) with corresponding sample sizes \( \tilde{m} \) and \( \bar{m}\). Then, for any admissible pair \( (v,\mu) \in K^s(\Omega) \), the discrete mixed residual functional \( \L^c_M\) defined in \eqref{discreteL} satisfies 
		\begin{align}\label{7.3}
			\|(u,\lambda) - (v,\mu)\|^2_{U} \lesssim \L^c_M(v,\mu) + \left( \|(v,\mu)\|^2_{\mathcal{U}} + 1\right)\mathfrak{R}_{\mathcal{U}}(\tilde{m}, \bar{m}),
		\end{align}
		where the constant in the inequality does not depend on \( u, v, \tilde{m}\), and \(\bar{m} \) and the residual term is given by
		\begin{align}\label{7.4}
			\mathfrak{R}_{\mathcal{U}}(\tilde{m}, \bar{m}) :=
			\max \left\{ \tilde{m}^{-\frac{2s}{d}},\, \bar{m}^{-\frac{2(\bar{s} - 1)}{d - 1}},\, \tilde{m}^{- \frac{2(\check s-1)}{d}} \right\}.
		\end{align}
	\end{theorem}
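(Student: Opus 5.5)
Starting point: the continuous coercivity estimate \eqref{5} of Theorem~\ref{thm_2.1}, extended to pairs that need not satisfy the boundary condition. Each continuous term is then replaced by its discrete counterpart from Section~\ref{Sec_5}, and the discretization errors are charged to the residual $\mathfrak{R}_{\mathcal U}$.

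\textbf{Step 1 (continuous bound with boundary term).} For $(v,\mu)$ with $v-\psi\ge0$, $\mu\ge0$ but $v|_{\partial\Omega}$ possibly $\neq g$, I split $v=v_0+Eg'$. Here $Eg'$ is a bounded harmonic extension of $g'=v|_{\partial\Omega}-g$, so $\|Eg'\|_{H^1}\lesssim\|v-g\|_{H^{1/2}(\partial\Omega)}$ and $\|\Delta Eg'\|_{-1}\lesssim\|v-g\|_{H^{1/2}(\partial\Omega)}$. Repeating the Young-inequality argument of Theorem~\ref{thm_2.1} with $w=v-u$ gives the estimate below; the trace part of $w$ only produces cross terms, which Young's inequality absorbs into the same quantities.
\begin{align*}
\|(u,\lambda)-(v,\mu)\|_U^2\lesssim \|\Delta v+\mu+f\|_{-1}^2+\|v-g\|^2_{H^{1/2}(\partial\Omega)}+\langle\mu,v-\psi\rangle .
\end{align*}
This is the bound $\|\cdot\|_U^2\lesssim\mathcal L_M(v,\mu)$.

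\textbf{Step 2 (replace continuous terms by discrete ones).}
\begin{itemize}
\item[(a)] Residual term. By the embedding $L^\gamma\hookrightarrow H^{-1}$, $\|\Delta v+\mu+f\|_{-1}\lesssim\|\Delta v+\mu+f\|_{L^\gamma}$. The $L^\tau$ equivalence of Section~\ref{sec_5.1} with $\tau=\gamma$ then bounds this by $\|\cdot\|^*_{L^\gamma}+\|\Delta v+\mu+f\|_{\BB}\tilde m^{-\alpha_\gamma}$. Since $\gamma\le 2$, we have $\alpha_\gamma\ge s/d$ up to the $(1/p-1/\gamma)_+$ correction, which I will either absorb by assuming $p\le\gamma$ or accept as the rate $s/d$. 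Moreover $\|\Delta v+\mu+f\|_{\BB}\le\|(v,\mu)\|_{\mathcal U}+1$ because $f\in U(\BB)$.
\item[(b)] Boundary term. The $H^{1/2}(\partial\Omega)$ discretization theorem (\cite[Theorem~6.4]{BVPS}), applied to $v-g$, gives the discrete $L^2$ and pairwise-difference terms of \eqref{discreteL} plus $(\|\operatorname{Tr}v\|_{\operatorname{Tr}\bar\BB}+1)\,\bar m^{-(\bar s-1)/(d-1)}$.
\item[(c)] Duality pairing. By \eqref{5.8}, $\langle\mu,v-\psi\rangle\le|\langle\mu,v-\psi\rangle^*|+(\|\mu\|_\BB^2+\|v-\psi\|_{\check\BB}^2)\tilde m^{-\min\{\alpha_\tau,\alpha'\}}$. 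The discrete pairing is bounded by the last (absolute-value) sum in $\mathcal L^c_M$, and $\|v-\psi\|_{\check\BB}\le\|(v,\mu)\|_{\mathcal U}+1$.
\end{itemize}

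\textbf{Step 3 (assemble).} Square (a) and (b) using $(x+y)^2\le2x^2+2y^2$, and add (c). Each error term becomes $(\|(v,\mu)\|_{\mathcal U}^2+1)$ times a squared rate, or a first-power rate for the pairing. Taking the maximum of these rates gives \eqref{7.4}, with the exponent for $\psi$ given by $\alpha'\sim(\check s-1)/d$. The implied constants come only from $C_J$, the embeddings and the extension operator, so they are independent of $u,v,\tilde m,\bar m$.

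\textbf{Main obstacle.} I expect two difficulties. First, the pairing error in (c) enters linearly rather than squared, so matching it to the squared rate $\tilde m^{-2(\check s-1)/d}$ requires either reading \eqref{5.8} with the rate $2\alpha'$ or weakening the stated exponent. I would first check whether the product structure in \eqref{5.7} actually yields the squared rate. Second, Step 1 must handle the inhomogeneous boundary condition carefully, since Theorem~\ref{thm_2.1} assumed $v\in W$. I would resolve this with the harmonic lifting and the stability \eqref{mainestimate}.
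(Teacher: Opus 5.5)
Your plan follows the paper's proof step for step. You start from the coercivity estimate \eqref{5}, bound $\|\Delta v+\mu+f\|_{-1}$ through $L^\gamma\hookrightarrow H^{-1}$ and the discrete $L^\gamma$ equivalence, treat the boundary term with the discrete $H^{1/2}(\partial\Omega)$ norm, use \eqref{5.8} for the pairing, and finish with $\|f\|_{\BB},\|g\|_{\operatorname{Tr}(\bar\BB)},\|\psi\|_{\check\BB}\le 1$. The paper does the same.

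Drop your Step~1. Since $K^s\subset W\times H^{-1}(\Omega)$ with $W=H^1_g(\Omega)$, every admissible $v$ satisfies $v=g$ on $\partial\Omega$. Hence $w=v-u\in H^1_0(\Omega)$ and \eqref{5} applies directly; the paper carries $\|v-g\|_{H^{1/2}(\partial\Omega)}$ only as a term that vanishes. As sketched, the lifting argument would also fail. For $w\notin H^1_0(\Omega)$:
\begin{itemize}
\item the pairing $\langle\nu,Eg'\rangle$ is not controlled by $\|\cdot\|_{-1}$, which is the dual norm of $H^1_0(\Omega)$;
\item the identity $-\langle\Delta w,w\rangle=\|\nabla w\|^2_{L^2(\Omega)}$ acquires the boundary term $-\int_{\partial\Omega}\partial_n w\,w$, which nothing in the $U$-norm bounds.
\end{itemize}
So the claim that the cross terms are absorbed by Young's inequality is unfounded. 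A smaller slip in (a): the correction $(1/p-1/\gamma)_+$ vanishes when $p\ge\gamma$, not when $p\le\gamma$, and $\alpha_\gamma\le s/d$ always.

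The difficulty you single out in (c) is a genuine gap, and it is exactly where the paper's own proof is unjustified. The paper quotes the pairing estimate with the factor $\tilde m^{-\min\{s/d,(\check s-1)/d\}}$. When collecting terms it then writes $\tilde m^{-2\min\{s/d,(\check s-1)/d\}}$ without comment.

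Checking \eqref{5.7} will not recover the square, for three reasons:
\begin{itemize}
\item The mixed terms $\|\lambda\|_{L^\tau}\|u-\psi\|_{\check\BB}\,\tilde m^{-\alpha'}$ and $\|\lambda\|_{\BB}\|u-\psi\|_{H^1(\Omega)}\,\tilde m^{-\alpha_\tau}$ are first order in the rate. Only the product of the two interpolation errors is second order.
\item Young's inequality squares the Besov norms but not the power of $\tilde m$.
\item The pairing enters $J$ linearly. There is no squaring step like the one that turns $\tilde m^{-s/d}$ into $\tilde m^{-2s/d}$ for the two norm terms.
\end{itemize}
In addition, \eqref{5.7} is derived by ``subtracting'' two upper bounds, which does not bound a difference. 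A genuine quadrature estimate for $\int_\Omega\mu(v-\psi)$ would be needed, and nothing suggests it would give a squared exponent.

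So reading \eqref{5.8} with rate $2\alpha'$ is not an option. What this argument actually proves is \eqref{7.3} with $\mathfrak R_{\U}$ enlarged by the first-power term $\tilde m^{-\min\{s,\check s-1\}/d}$, which includes a first-power $\tilde m^{-s/d}$. Obtaining the stated residual \eqref{7.4} requires a sharper estimate for the discrete pairing, which neither your proposal nor the paper supplies.
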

	
	\begin{proof}
		From Theorem~\ref{thm_2.1}, the obstacle problem admits the mixed stability estimate. For all $(v,\mu)\in K^s$, it holds
		\begin{align}\label{5.10}
			\|(u,\lambda)-(v,\mu)\|^2_{U}\lesssim J(v,\mu),
		\end{align}
		where the mixed functional $J$ is defined by~\eqref{J(u)}. Hence, it suffices to bound $J(v,\mu)$ in terms of the discrete loss. By construction of the discrete interior norm and the Sobolev embedding $L^\gamma(\Omega)\hookrightarrow H^{-1}(\Omega)$ with $\gamma =2d/(d+2)$, we have the norm equivalence 
		\begin{align*}
			\|\Delta v +\mu + f\|_{-1}&\leq 	\|\Delta v +\mu + f\|_{L^\gamma(\Omega)} \\
			&\leq \|\Delta v +\mu + f\|^*_{L^\gamma(\Omega)} + \|\Delta v +\mu + f\|_{\BB}\tilde{m}^{-\frac{s}{d}}.
		\end{align*}
		Similarly, the norm corresponding to the boundary term satisfies the estimate
		\begin{align*}
			\| \operatorname{Tr}(v) - g\|_{H^{1/2}(\partial\Omega)}\leq	\| \operatorname{Tr}(v) - g\|^*_{H^{1/2}(\partial\Omega)}\ + \|\operatorname{Tr}(v)-g\|_{\operatorname{Tr}(\bar \BB)} \bar{m}^{-\frac{s-1}{d-1}} .
		\end{align*}
			The last term in $J(v,\mu)$ is the duality pairing $\langle \mu, v-\psi\rangle$. From Section~\ref{sec_5.3}, the continuous and discrete duality pairings satisfy
			\begin{align*}
				|\langle \mu, v-\psi\rangle|	\le	|\langle \mu, v-\psi\rangle^*| + \left( \|\mu\|_{{\BB}}^2+\|v-\psi\|^2_{\check{\BB}}\right) \tilde m^{-\min\{\frac{s}{d},\frac{\check s-1}{d}\}}.
			\end{align*}
			Collecting the above estimates and combining them with the stability bound \eqref{5.10}, we obtain
			\begin{align*}
				\|(u,\lambda)-(v,\mu)\|^2_{U}&\leq  \|\Delta v + \mu+f\|^2_{-1} + \|v-g\|^2_{H^{1/2}(\partial\Omega)} + \langle\mu, v-\psi\rangle\\
				&\leq  \|\Delta v + \mu+f\|^{2,*}_{L^{\gamma}(\Omega)} + \|v-g\|^{2,*}_{H^{1/2}(\partial\Omega)} + \langle\mu, v-\psi\rangle^* \\&+ \|\Delta v +\mu + f\|^2_{\BB}\tilde{m}^{-\frac{2s}{d}} + \|\operatorname{Tr}(v)-g\|^2_{\operatorname{Tr}(\bar \BB)} \bar{m}^{-\frac{2(\bar s-1)}{d-1}} \\&+ \left( \|\mu\|_{{\BB}}^2+\|v-\psi\|^2_{\check{\BB}}\right) \tilde m^{-2\min\{\frac{s}{d}, \frac{\check s-1}{d}\}}\\
				&\lesssim  \L^c_M(v,\mu) + \left( \|(v,\mu)\|^2_{\mathcal{U}} + 1\right)\mathfrak{R}_{\mathcal{U}}(\tilde{m}, \bar{m}),
			\end{align*}
			where the second estimate follows directly from the continuous embedding $L^\gamma(\Omega)\hookrightarrow H^{-1}(\Omega)$ together with the equivalence between the discrete and continuous norms. The argument is completed by using the bound $\|f\|_{\BB}\leq 1$, $\|g\|_{\operatorname{Tr}(\bar{\BB})}\leq 1$ and $\|\psi\|_{\check{\BB}}\leq 1$, which concludes the proof. 
		\end{proof}

		\section{Numerical results}\label{Sec_6}
		In this section, we demonstrate the application of the introduced PINNs framework to a series of numerical examples. For all case studies, we employ fully connected neural networks to approximate the latent functions that represent both the PDE solutions and the unknown boundary conditions. In the numerical experiments, we focus on the mixed PINNs and CPINNs loss, which provides a numerically robust and implementable realization of the primal stability-based loss introduced in the theoretical analysis.
		
		The numerical experiments are organized as follows. We first consider smooth solutions with homogeneous boundary conditions to verify basic convergence properties. We then examine smooth solutions with non-homogeneous boundary data and increasingly complex obstacle geometries, leading to nontrivial contact sets. Next, we demonstrate the applicability of the framework to vector-valued and higher-order obstacle problems, including linear elasticity, and biharmonic models. Finally, in the last experiment, we consider an obstacle problem with a non-smooth exact solution in order to assess the performance of the method under reduced regularity.
		
		\subsection*{Computational setup}
		For all the experiments, we use the following architectural and computational setup to ensure stable training and accurate approximation of the obstacle problem.
		\begin{itemize}
			\item \textbf{Network architecture: }For computation of all the experiments, we use a fully connected neural network of depth 3 with 15 neurons on each hidden layer.
			\item \textbf{Activation function: }The $\mathrm{ReLU}^3$ activation function is used in the hidden layers, while $\tanh$ is used in the output layer.
			\item \textbf{Parameter initialization: }Weights are initialized using Xavier and He initialization~\cite{glorot2010understanding} to maintain stable variance across layers.
			\item \textbf{Optimization scheme: }The optimization is performed using the energy natural gradient descent scheme for updating the network weights, with training carried out for 500 epochs.
		\end{itemize}
		\subsection{Experiment 1}\label{exp1}
		As a first numerical test, we consider the benchmark problem with the manufactured solution 
		\begin{align*}
			u_1(x,y) =  \sin{\pi x}\sin{\pi y},\ \quad (x,y) \in (0,1)^2,	
		\end{align*}
		with the obstacle 
		\begin{align*}
			\psi_1(x,y) = 
			\begin{cases}
				\sin{\pi x}\sin{\pi y}, & x\leq 1/2, \\
				(128x^3-240x^2+144x-27)\sin{\pi y}, & x\in(1/2,3/4),\\
				0,&x\geq 3/4.
			\end{cases}
		\end{align*}
		This solution satisfies homogeneous Dirichlet boundary conditions and the forcing term is taken as 
		\begin{align*}
			f(x,y) = 
			\begin{cases}
				0, &x<1/2,\\
				-\Delta u, &x\geq 1/2.
			\end{cases}
		\end{align*} 
		The solution is smooth, vanishes on the boundary, and is commonly used in the PDE literature as a prototype test case. Substituting the solution into the governing PDE operator yields the corresponding boundary data \(g\).
		The corresponding error measurements in the relative $H^1(\Omega)$ norm and graphical illustrations are provided below. 
		
		\begin{table}
			\centering
			\renewcommand{\arraystretch}{1.2}
			\begin{tabular}{|c|c c c c|c|}
				\hline
				Grid &\multicolumn{4}{c|}{$u_{1}(x,y) $} \\
				\cline{2-5}
				points & Error $(\%)$& Loss $(10^{-6})$  & $\mathfrak{R}_{\mathcal{U}}(\tilde{m}, \bar{m})$ & $ \|(v,\mu)\|_{\U}^2$ \\
				$N$ & PINNs / CPINNs & $\L^p_{M}$ / $\L_{M}^c$ & 	  & PINNs / CPINNs  \\
				\hline
				$5$ & 1.25 / 1.41 & $1.0$ / $0.0$ & $3.90 \times 10^{-3}$ & $19.04$ / $19.02$ \\
				$10$ &  1.01 / 0.42 & $0.41$ / $8.0$ &  $7.71\times 10^{-4}$  & $17.90$ / $17.91$ \\
				$15$ & 0.59 / 0.53 & $7.0$ / $1.9$ & $3.18 \times 10^{-4}$ & $18.21$ / $18.21$ \\
				$20$ & 0.55/ 0.51 & $6.0$ / $1.7$ & $1.73 \times 10^{-4}$  & $21.21$ / $21.10$ \\
				$25$ & 0.57 / 0.53 & $2.3$ / $1.4$ &  $1.08 \times 10^{-4}$  & $17.96$ / $17.90$ \\
				$30$ & 0.43 / 0.32 & $2.2$ / $0.3$& $7.43 \times 10^{-5}$ & $18.14$ / $18.16$ \\
				\hline
			\end{tabular}
			\caption{Error comparison of solution \(u_1\) obtained by using CPINNs and standard PINNs.}
			\label{table_11}
		\end{table}
		The numerical results reported in Table~\ref{table_11} are consistent with the stability estimate
		\begin{equation}\label{eq:exp1_stability}
			\|(u,\lambda)-(v,\mu)\|_{U}^{2}
			\;\lesssim\;
			\mathcal{L}_{M}^{c}(v,\mu)
			+
			\big(\|(v,\mu)\|_{\U}^{2}+1\big)\,
			\mathfrak{R}_{\mathcal{U}}(\tilde m,\bar m),
		\end{equation}
		where the recovery residual $\mathfrak{R}_{\mathcal{U}}(\tilde m,\bar m)$ is evaluated for the
		Besov smoothness index $s=2$. As the number of interior and boundary collocation points increases, both the discrete mixed loss $\mathcal{L}_{M}^{c}$ and the corresponding recovery residual decrease. This decay is reflected in the numerical data by a systematic reduction of the $\mathcal{U}$-norm error, which confirms the consistency of the proposed discretization.

		\subsection{Experiment 2}
		As a second numerical experiment, we examine a smooth oscillatory solution with non-homogeneous boundary condition given by
		\begin{align*}
			u_2(x,y) = 0.6 - (x-0.5)^2 - (y-0.5)^2 + 0.1\sin(\pi x) \sin(\pi y),\ \quad (x,y) \in (0,1)^2,	
		\end{align*}
		with the obstacle 
		\begin{align*}
			\psi_2(x,y) = 0.6 - (x-0.5)^2 - (y-0.5)^2.
		\end{align*}
		The forcing term is defined in the same way as in Experiment~\ref{exp1}. The corresponding error results and plots for both experiments are presented below. In addition, Table~\ref{table_1} reports the verification of the stability estimate,
		which demonstrates the consistency between the discrete loss and the observed error behavior.
		\begin{table}
			\centering
			\renewcommand{\arraystretch}{1.1}
			\begin{tabular}{|c|c c c c|c|}
				\hline
				Grid &\multicolumn{4}{c|}{$u_{2}(x,y) $} \\
				\cline{2-5}
				points & Error $(\%)$& Loss $(10^{-6})$  & $\mathfrak{R}_{\mathcal{U}}(\tilde{m}, \bar{m})$ & $ \|(v,\mu)\|_{\U}^2$ \\
				$N$ & PINNs / CPINNs & $\L^p_{M}$ / $\L_{M}^c$ & 	  & PINNs / CPINNs  \\
				\hline
				$5$ &  0.72 / 0.68 & $0.11$ / $0.02$ &  $3.90 \times 10^{-3}$ & $21.22$ / $21.23$ \\
				$10$ &  0.24 / 0.17 & $0.09$ / $0.02$ &  $7.71\times 10^{-4}$  & $22.89$ / $22.87$ \\
				$15$ & 0.24 / 0.15 & $0.09$ / $0.01$ & $3.18 \times 10^{-4}$ & $23.33$ / $23.32$ \\
				$20$ & 0.24/ 0.14 & $0.07$ / $0.01$ & $1.73 \times 10^{-4}$  & $22.80$ / $22.80$ \\
				$25$ & 0.26 / 0.15 & $0.09$ / $0.01$ &  $1.08 \times 10^{-4}$  & $23.59$ / $23.54$ \\
				$30$ & 0.23 / 0.13& $0.08$ / $0.01$& $7.43 \times 10^{-5}$ & $23.21$ / $23.19$ \\
				\hline
			\end{tabular}
			\caption{Error comparison of solution \(u_2\) obtained by using CPINNs and standard PINNs.}
			\label{table_1}
		\end{table}
		
		\begin{figure}
				\subfloat{
					{\includegraphics[width=0.32\textwidth]{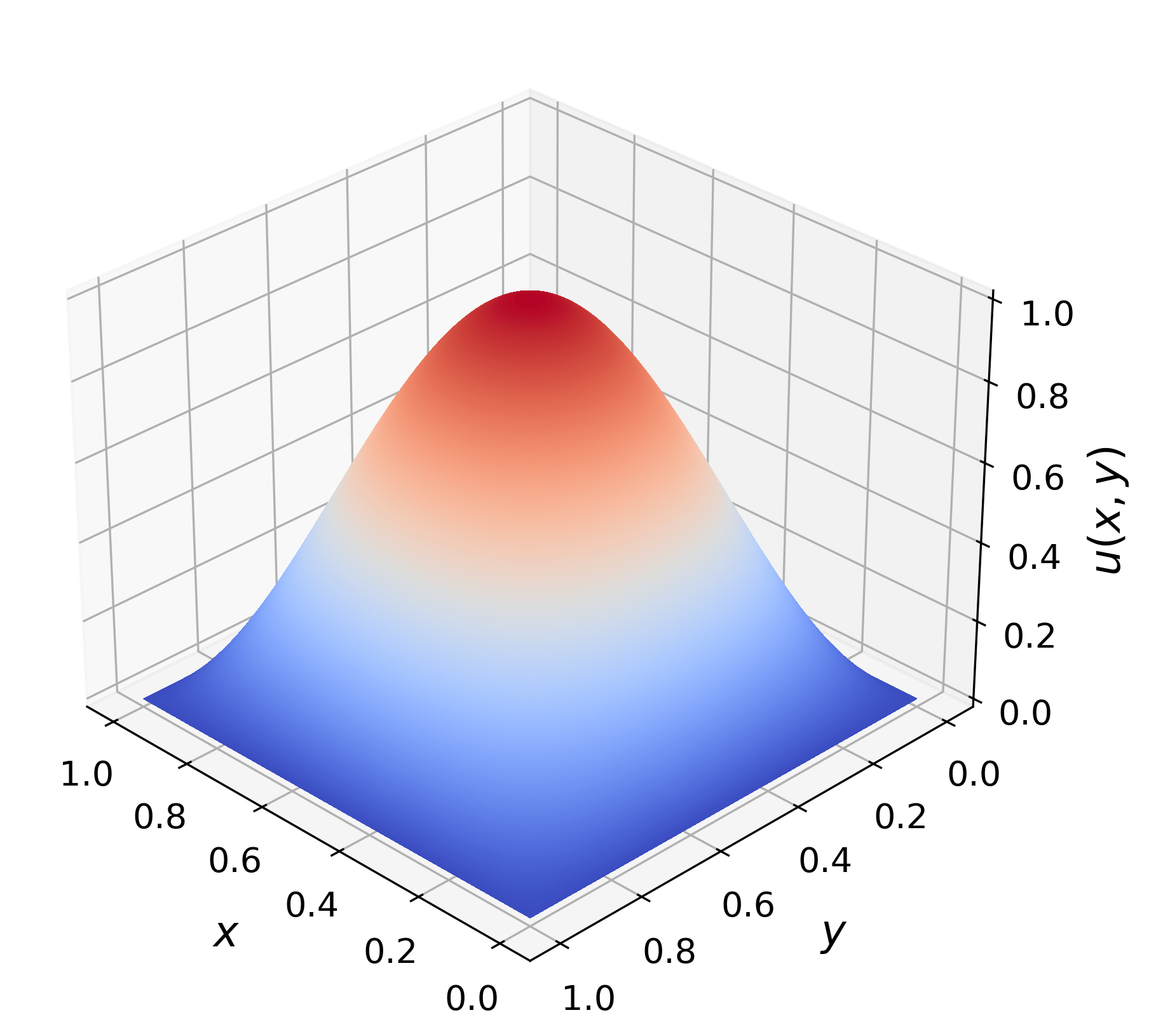}} 
					{\includegraphics[width=0.32\textwidth]{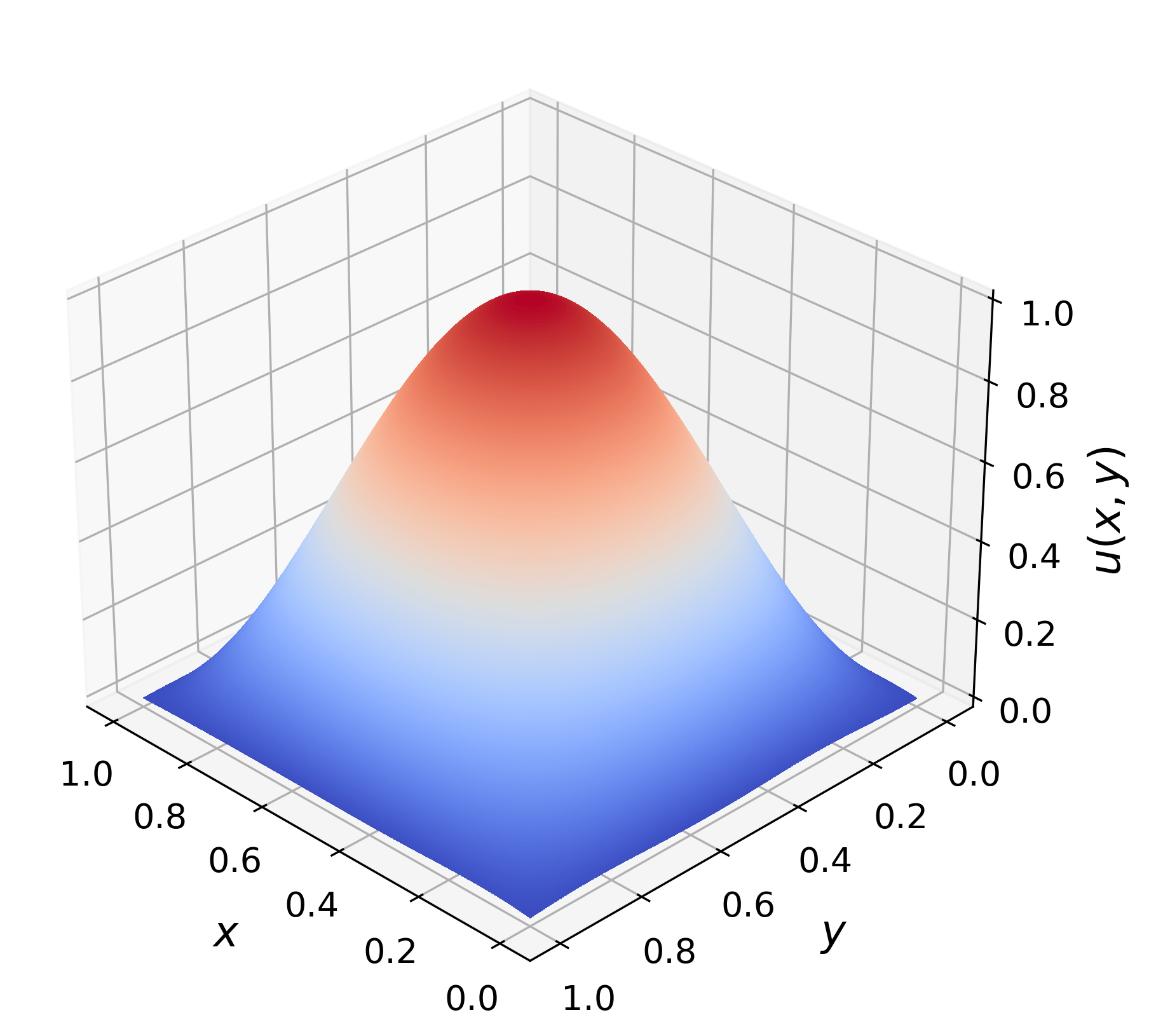}}
					{\includegraphics[width=0.32\textwidth]{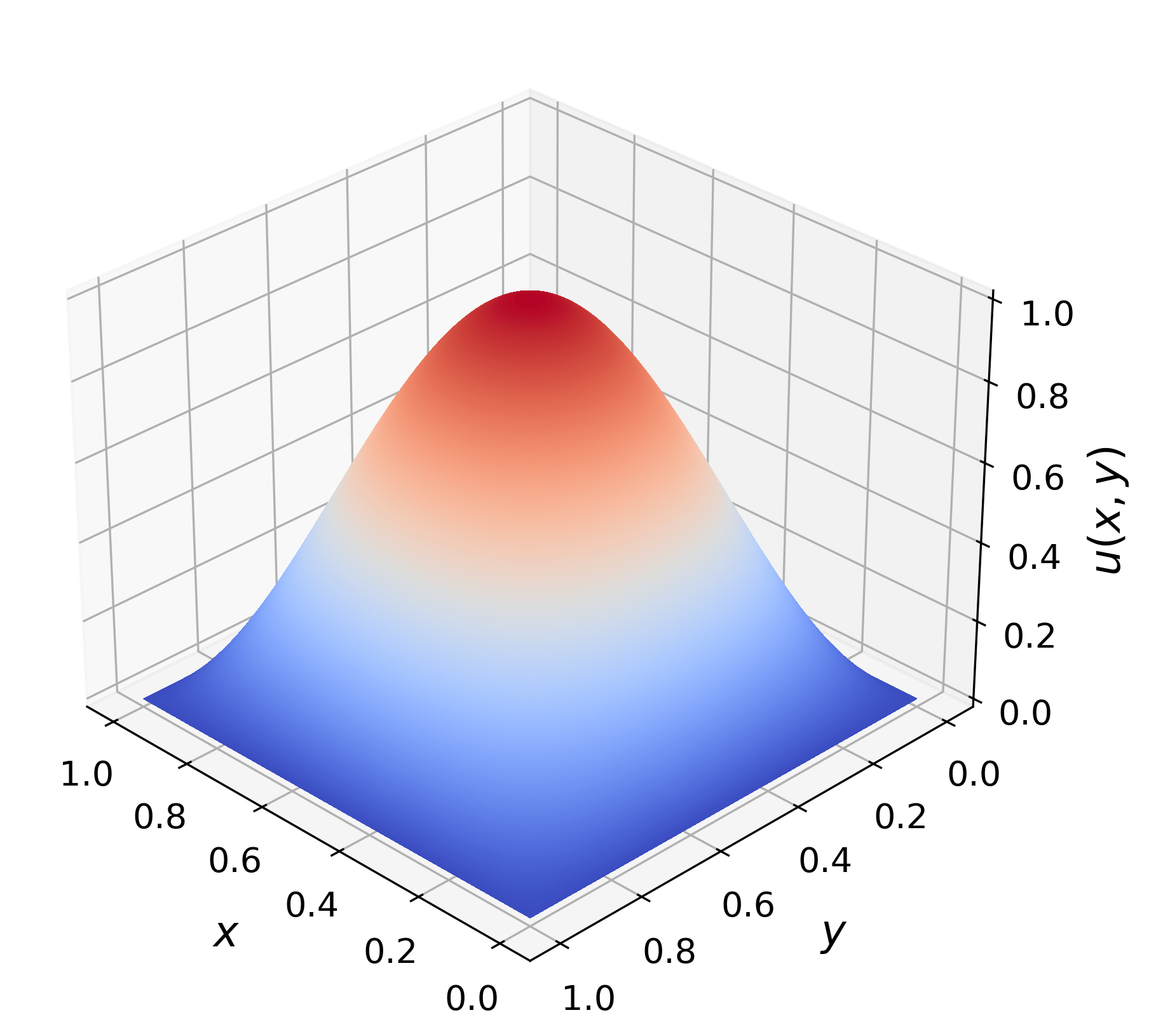}}
				}
				
				\subfloat{
					{\includegraphics[width=0.32\textwidth]{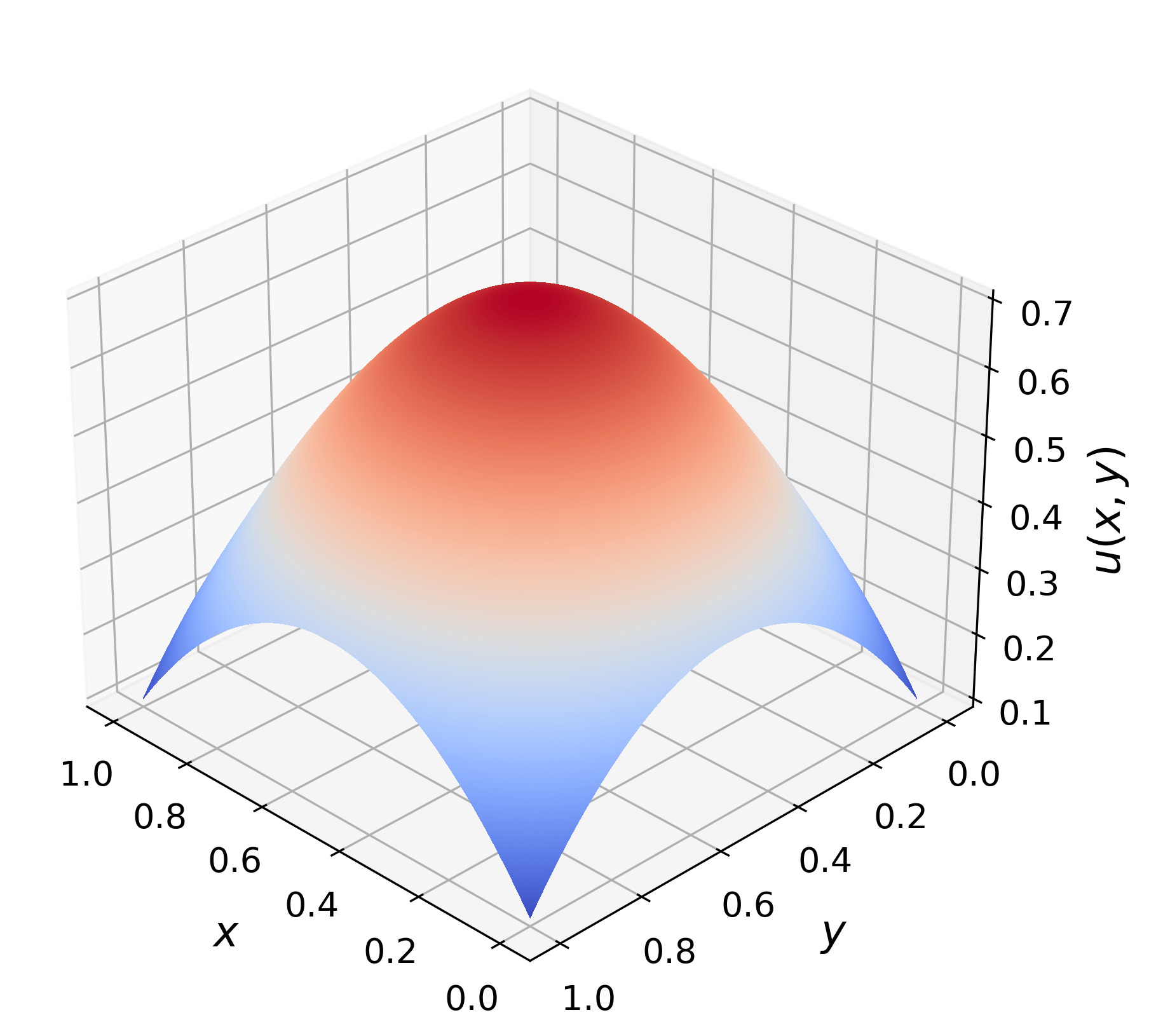}} 
					{\includegraphics[width=0.32\textwidth]{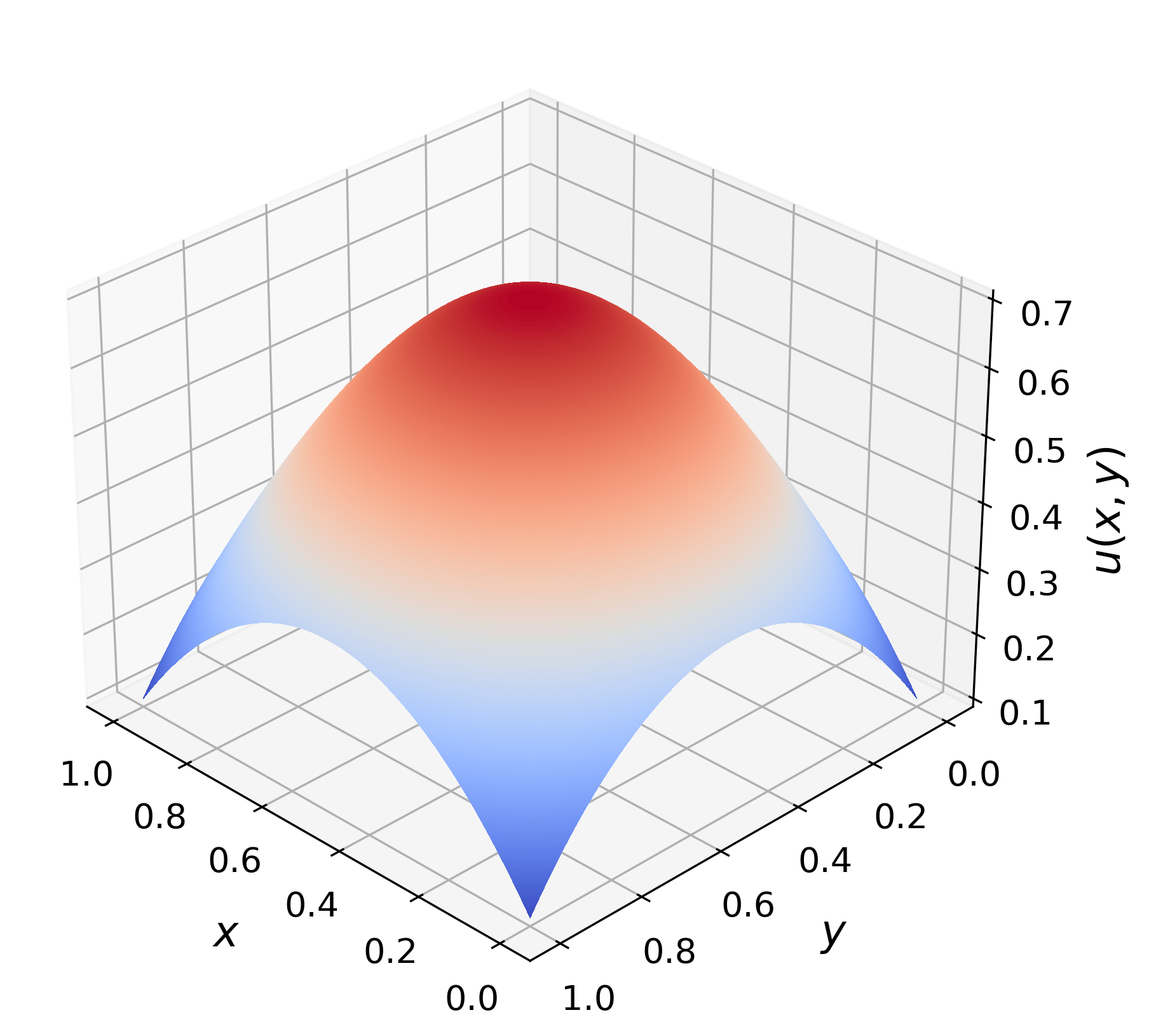}}
					{\includegraphics[width=0.32\textwidth]{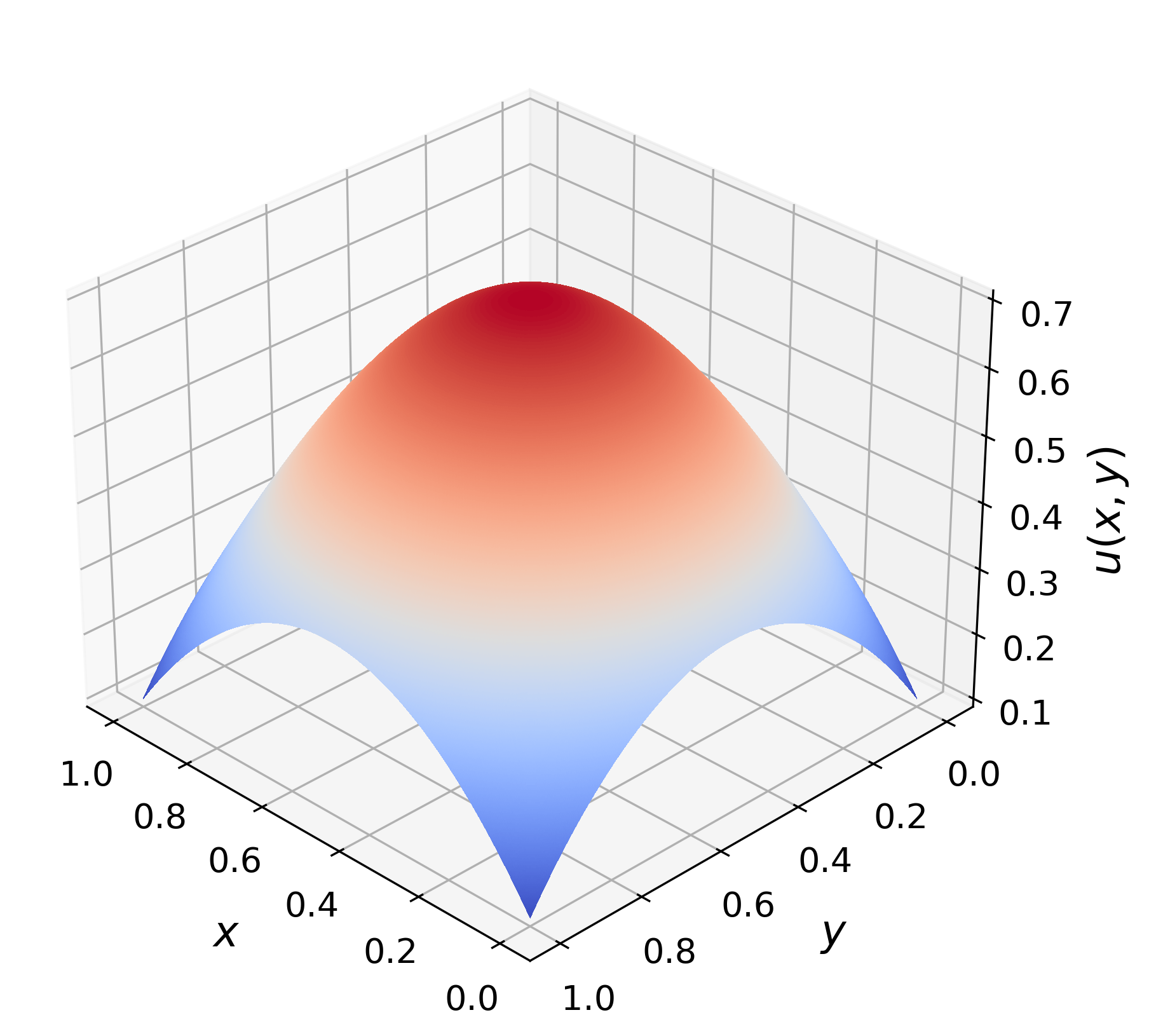}}
				}
				
				\caption{Visualization of the true solution (left), standard PINNs prediction using $\L^p_{M}$ (middle) and CPINNs prediction using $\L_{M}^{c}$ (right), evaluated on $15 \times 15$ grid for $u_1$ (top row) and $u_2$ (bottom row).}
		\end{figure}

\subsection{Experiment 3}
In this experiment, we consider a non-homogeneous boundary value problem with the exact solution
\begin{align*}
u_3(x,y) = x+y+xy(1-x)(1-y),\ \quad (x,y) \in (0,1)^2,	
\end{align*}
and the obstacle function is chosen as
\begin{align*}
\psi_3(x,y) =0.2(x+y)^2,
\end{align*}
which lies below the exact solution in most of the domain and induces a nontrivial contact region. The forcing term is again defined as in Experiment~\ref{exp1}. The numerical errors and the corresponding graphical representations for this experiment are presented below.

\subsection{Experiment 4}
As another non-homogeneous obstacle problem, we consider the exact solution
\begin{align*}
u_4(x,y) = \sin(\pi x) + \sin(\pi y)+0.1 xy(1-x)(1-y),\ \quad (x,y) \in (0,1)^2,	
\end{align*}
and the obstacle is designed in a ring-shaped form and is given by
\begin{align*}
\psi_4(x,y) =0.8-4[(x-0.5)^2+(y-0.5)^2-0.1]^2.
\end{align*}
This obstacle generates an annular active region around the center of the domain, while the solution remains free inside the inner ring and near the boundary. The corresponding forcing term is given by $f(x,y) = 	8((x-0.5)^2+(y-0.5)^2)-1$ for region $0.1\leq(x-0.5)^2+(y-0.5)^2\leq 0.3$ and $f(x,y) = 	\pi^2( \sin(\pi x) + \sin(\pi y))+ 0.2(x+y-2xy)$, elsewhere. The numerical error results and the corresponding visualizations for this experiment are summarized below.
\begin{figure}

\subfloat{
{\includegraphics[width=0.32\textwidth]{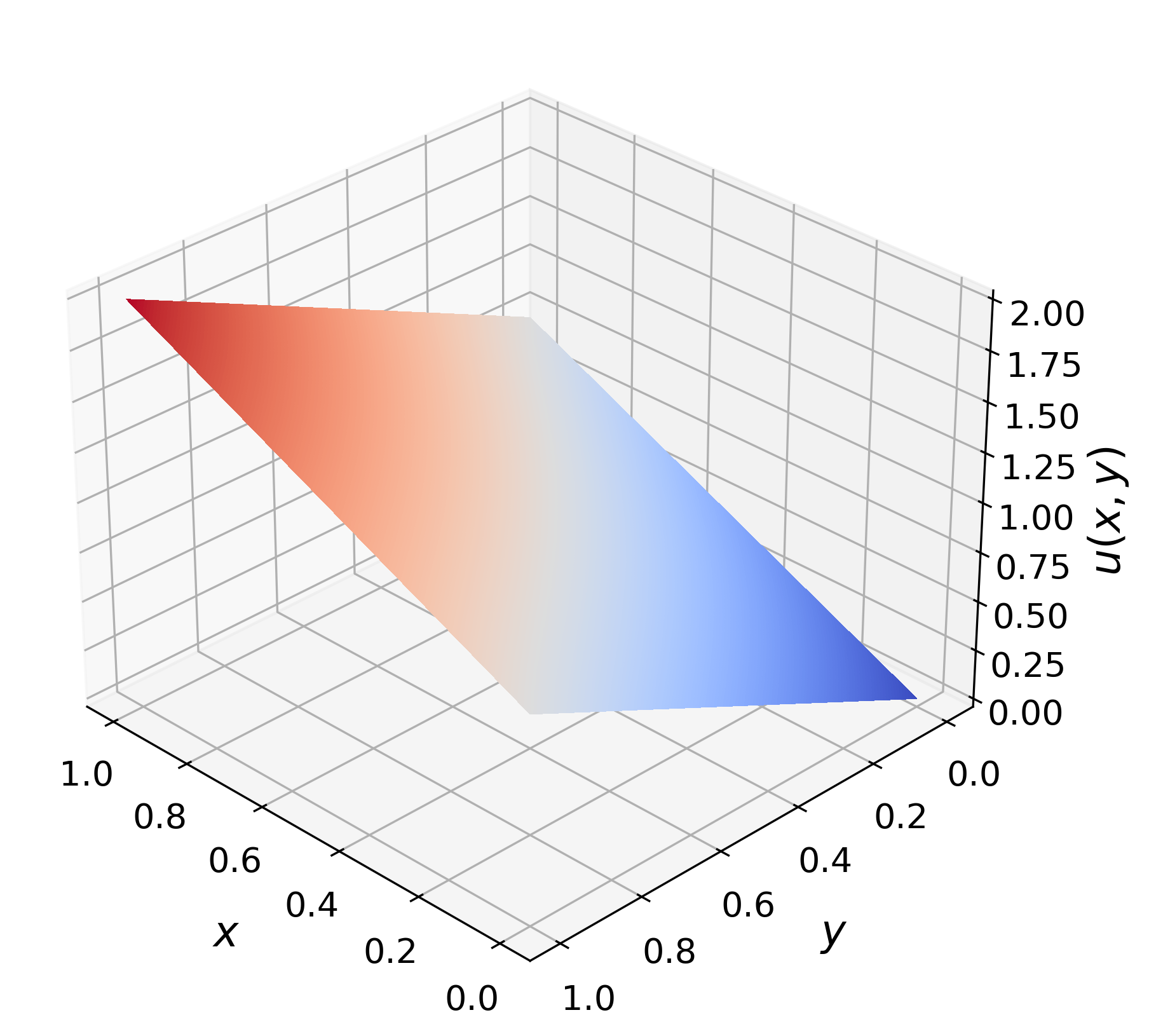}} 
{\includegraphics[width=0.32\textwidth]{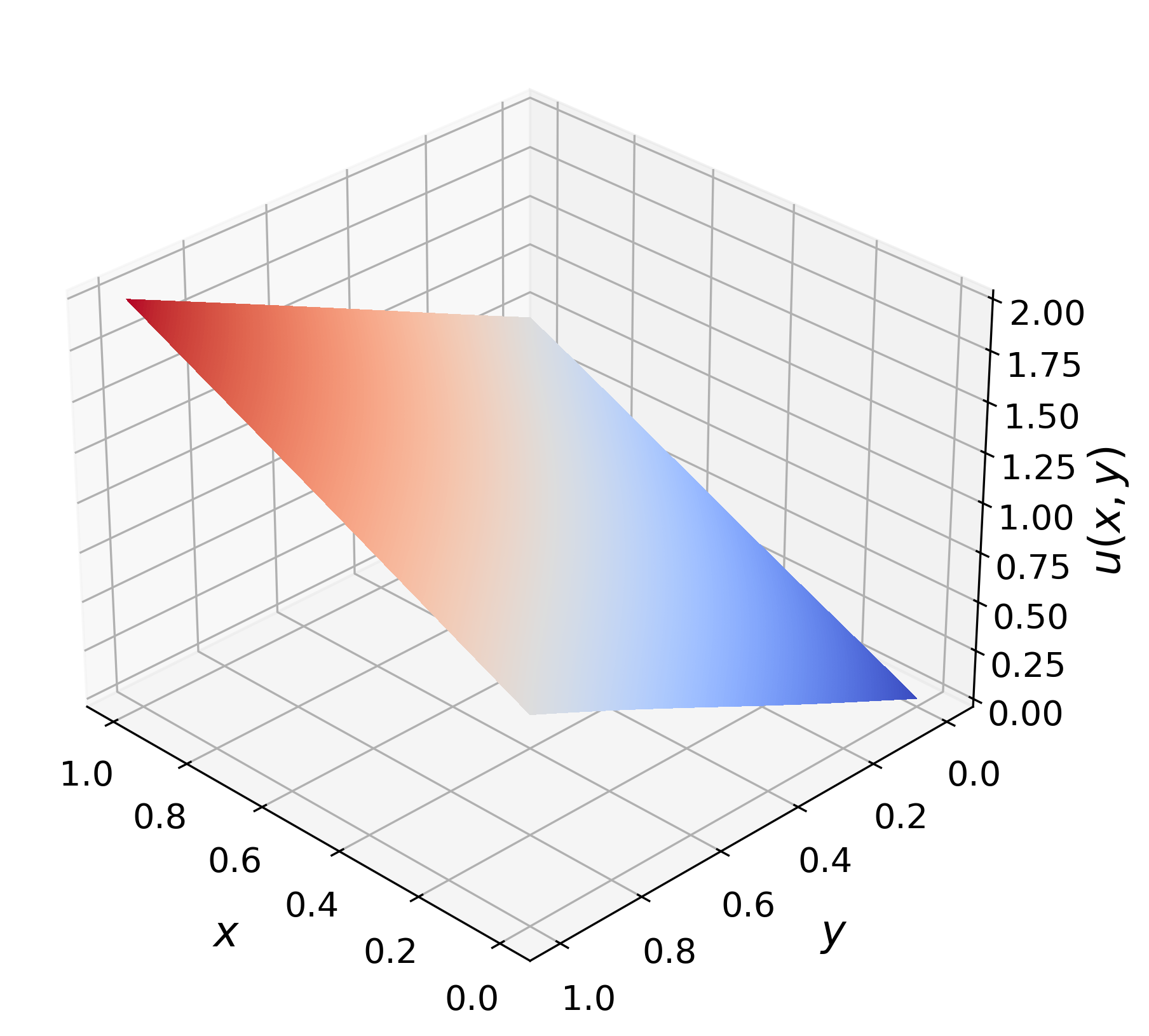}}
{\includegraphics[width=0.32\textwidth]{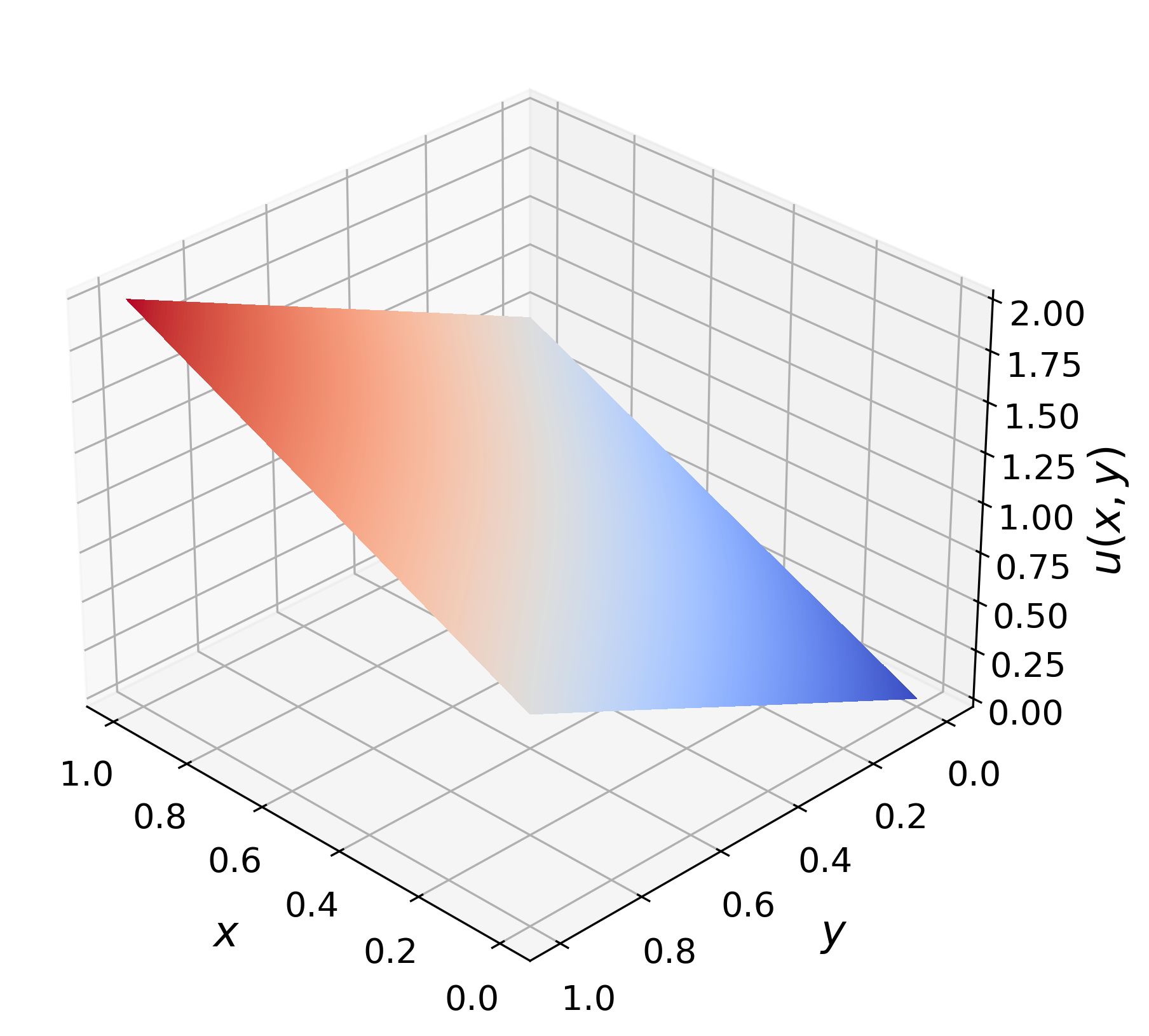}}
}

\subfloat{
{\includegraphics[width=0.32\textwidth]{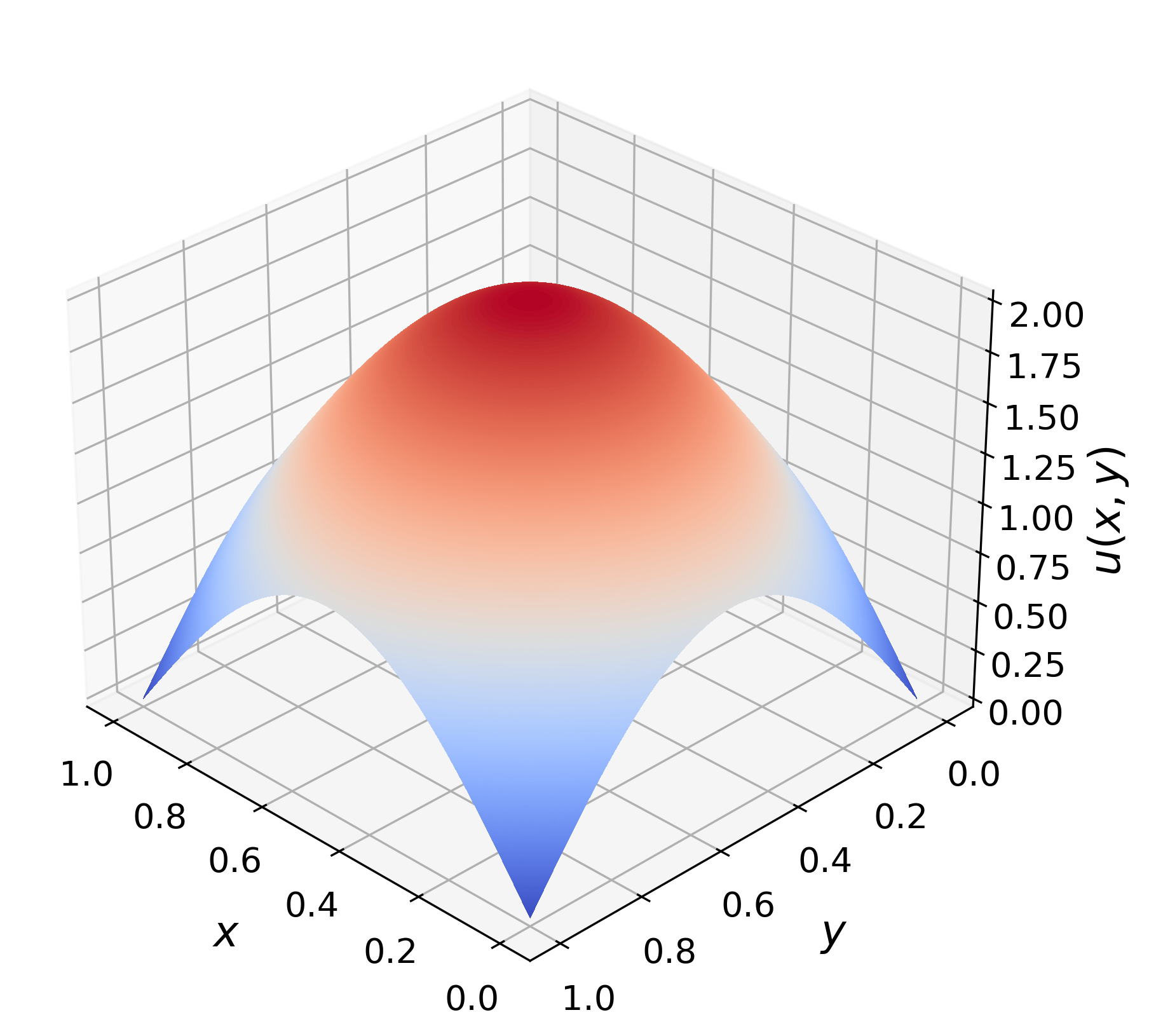}} 
{\includegraphics[width=0.32\textwidth]{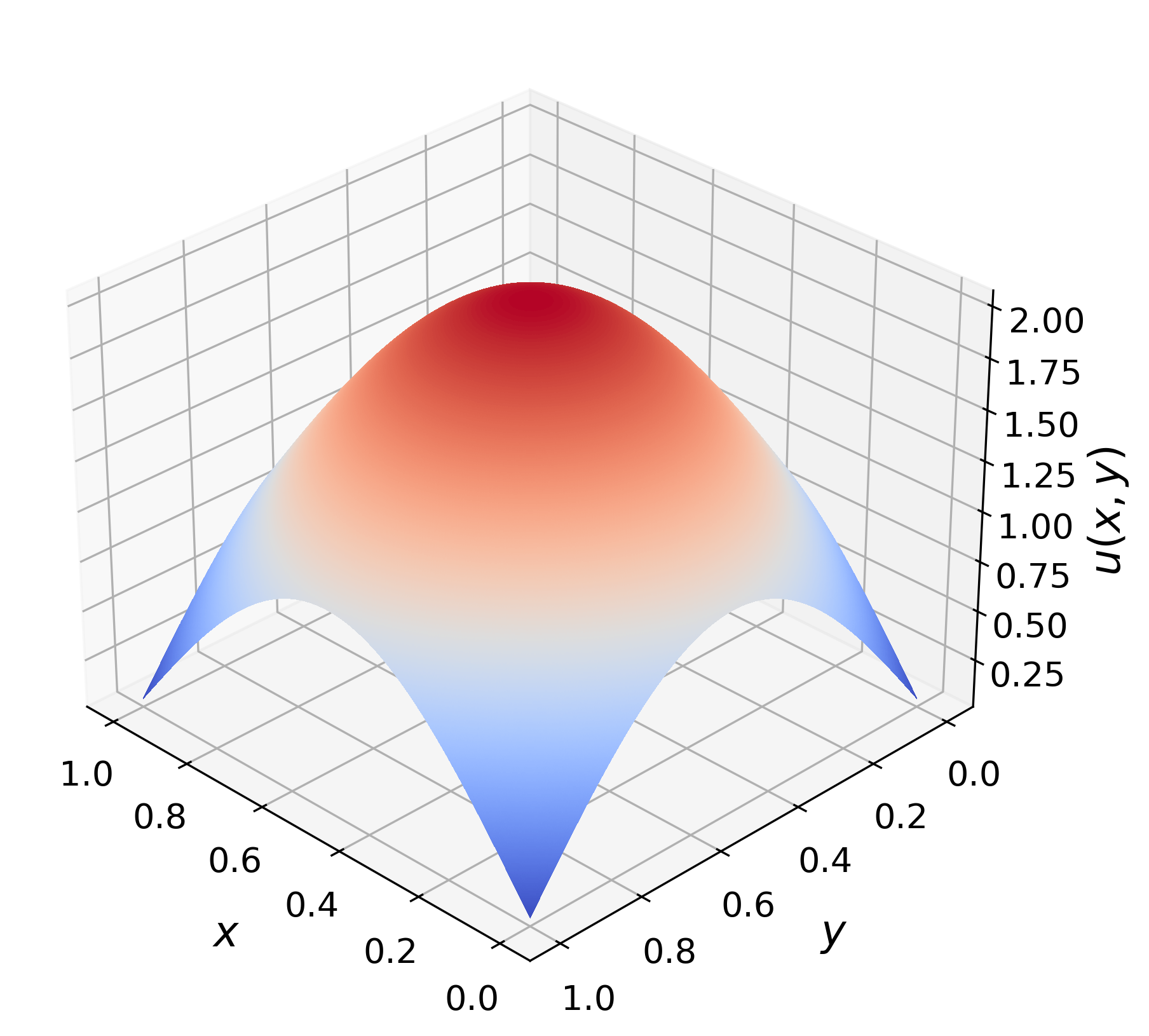}}
{\includegraphics[width=0.32\textwidth]{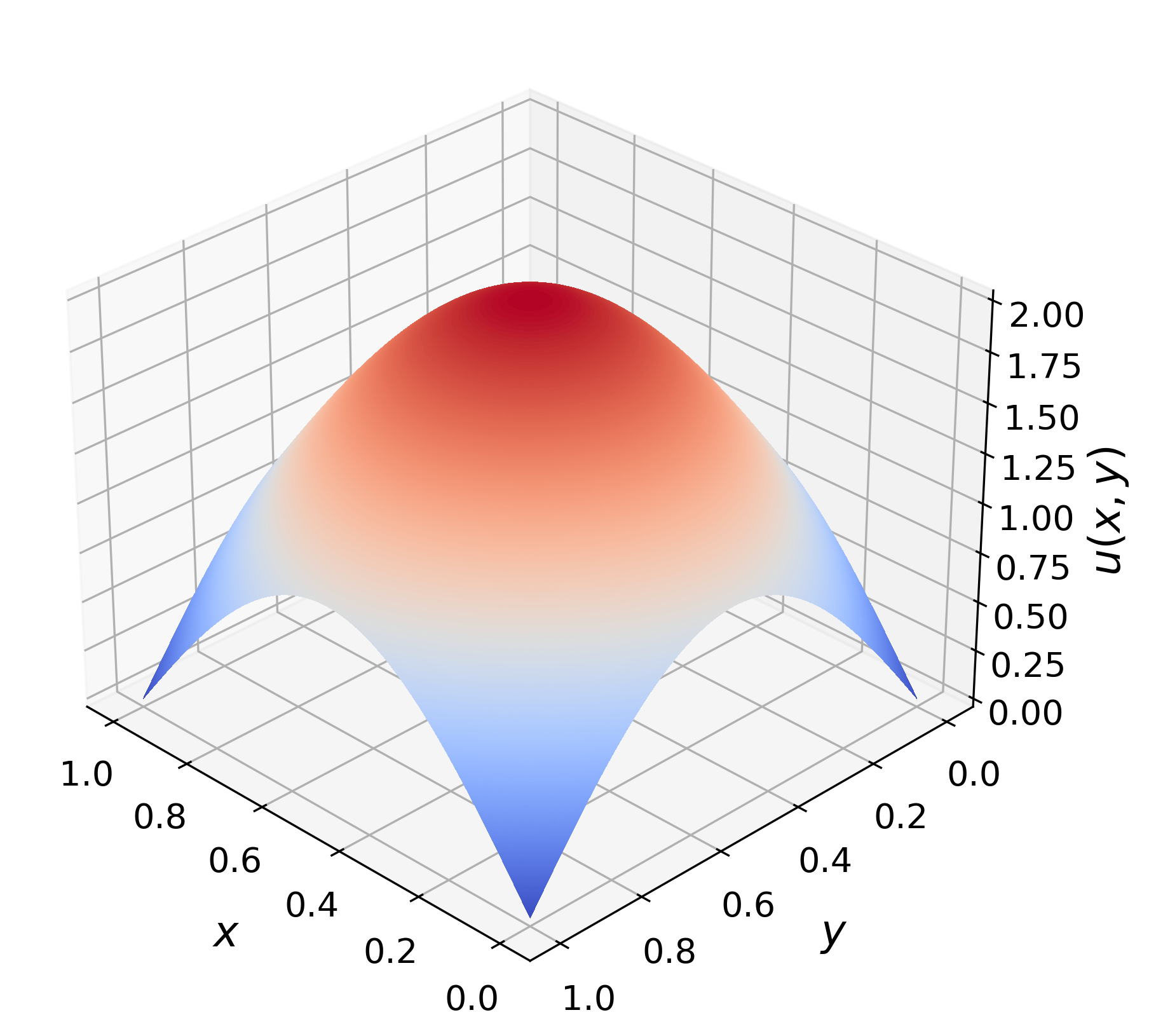}}
}

\caption{Visualization of the true solution (left), standard PINNs prediction using $\L^p_{M}$ (middle) and CPINNs prediction using $\L_{M}^{c}$ (right), evaluated on $15 \times 15$ grid for $u_3$ (top row) and $u_4$ (bottom row).}
\end{figure}

\begin{table}
\centering
\renewcommand{\arraystretch}{1.1}
\begin{tabular}{|c|c c|c c|c|}
\hline
Grid &\multicolumn{2}{|c|}{$u_3(x,y) $} &  \multicolumn{2}{c|}{$u_4(x,y)$} \\
\cline{2-5}
points & Error $(\%)$& Loss $(10^{-5})$ & Error$(\%)$ & Loss $(10^{-4})$ \\
$N$ & PINNs / CPINNs & $\L^p_{M}$ / $\L_{M}^c$ & PINNs / CPINNs & $\L^p_{M}$ / $\L_{M}^c$ \\
\hline
$5$ & 0.67 / 0.23 & $0.3$ / $0.1$ &  3.47 / 1.58 & $2.72$ / $1.88$ \\
$10$ & 0.56 / 0.13 & $0.7$ / $0.2$ &  2.60 / 0.93 & $1.36$ / $1.09$ \\
$15$ & 0.60 / 0.12 & $0.9$ / $0.3$ & 2.32 / 0.78 & $1.09$ / $0.90$ \\
$20$ & 0.59 / 0.10 & $0.9$ / $0.2$ & 2.12/ 0.68 & $0.96$ / $0.82$ \\
$25$ & 0.57 / 0.10 & $0.6$ / $0.1$ &  2.20 / 0.64 & $0.92$ / $0.91$ \\
$30$ & 0.56 / 0.09& $0.7$ / $0.1$ & 2.02 / 0.51& $0.84$ / $0.77$ \\
\hline
\end{tabular}
\caption{Error comparison of solutions \(u_3\) and \(u_4\) obtained by using CPINNs and standard PINNs.}
\label{table_3}
\end{table}

\subsection{Experiment 5 (Linear elasticity obstacle problem)} 
In this experiment, we consider a two dimensional linear elasticity model with a nonlinear smooth displacement field and a vector-valued obstacle that restricts the admissible deformation. The governing equilibrium equations are
\begin{align*}
\nabla\cdot\mathbb{T} \geq -\boldsymbol{F} \quad &\text{in } \Omega, \\
\boldsymbol{u} \geq \boldsymbol{\psi} \quad &\text{in } \Omega, \\
(\boldsymbol{u}-\boldsymbol{\psi})\,(-\nabla\cdot\mathbb{T} - \boldsymbol{F}) = 0 \quad &\text{in } \Omega,\\
\boldsymbol{u }= \boldsymbol{g }\quad &\text{on } \partial\Omega,
\end{align*}
where the stress tensor is $\mathbb{T} = \lambda \textbf{Tr}(\sigma)I+2\mu\sigma$ with Lam$\acute{e}$ parameters $\lambda, \mu$ and stress tensor are defined as
\begin{align}\label{strain}
\lambda = \frac{E\nu}{(1+\nu)(1-2\nu)},\quad \mu = \frac{E}{2(1+\nu)}, \ \text{and} \ \sigma(\boldsymbol{u}) = \frac{(\nabla \boldsymbol{u} + (\nabla\boldsymbol{ u})^T)}{2}.
\end{align}
We prescribe a smooth manufactured displacement vector field 
$$\boldsymbol{u_5} = ((1+x^2)(1+y^2)e^{(x+y)},\, (1+x^2)(1+y^2)e^{(x+y)}),$$ 
for $E =1 $ and $\nu = 0.3, \, 0.4,\, 0.49$ over the domain $\Omega = (0,1)^2$. We impose a radially symmetric obstacle centered at the midpoint of the domain
\begin{align*}
\boldsymbol{\psi_5} = (0.8-4[(x-0.5)^2+(y-0.5)^2-0.1]^2,\,0.8-4[(x-0.5)^2+(y-0.5)^2-0.1]^2), 
\end{align*}
and the body force is defined as
\begin{align*}
\boldsymbol F(x,y):=
\begin{cases}
\big(
5|\boldsymbol{r}|^2-0.5,\;
5|\boldsymbol{r}|^2-0.5
\big), &0.1 \le|\boldsymbol{r}|^2 \le 0.3,\\
-\nabla\cdot \mathbb T,& \text{otherwise},
\end{cases}
\end{align*}
for $|\boldsymbol{r}|^2= (x-0.5)^2+(y-0.5)^2$. We present below the obtained error values and the visualizations that express the behavior of the computed solution and the obstacle.
\begin{table}[H]
\centering
\footnotesize
\renewcommand{\arraystretch}{1.2}
\begin{tabular}{|c|c c|c c|c c|}
\hline
Grid & \multicolumn{2}{c|}{\boldsymbol{$E=1,\, \nu = 0.3$}} 
& \multicolumn{2}{c|}{\boldsymbol{$E=1,\, \nu = 0.4$}} 
& \multicolumn{2}{c|}{\boldsymbol{$E=1,\, \nu = 0.49$}} \\
\cline{2-7}
points & Error $(\%)$& Loss $(10^{-4})$ & Error $(\%)$ & Loss $(10^{-4})$ & Error $(\%)$ & Loss $(10^{-3})$ \\
$N$ & $\mathscr{P}$ / $\mathscr{C}$ & $\L^p_{M}$ / $\L_{M}^c$ & $\mathscr{P}$ / $\mathscr{C}$& $\L^p_{M}$ / $\L_{M}^c$ & $\mathscr{P}$ / $\mathscr{C}$ & $\L^p_{M}$ / $\L_{M}^c$ \\
\hline
$5$ & 0.24 / 0.05 & $0.69$ / $0.34$ &  0.18 / 0.20 & $0.15$ / $1.61$ & 3.32 / 3.54 & $1.50$ / $6.97$ \\
$10$ & 0.19 / 0.04 & $0.29$ / $0.12$ &  0.35/ 0.10 & $4.07$ / $2.29$ & 1.47 / 1.18 & $2.01$ / $2.19$ \\
$15$ & 0.55 / 0.016 & $2.88$ / $0.89$ & 0.14 / 0.057 & $1.12$ / $3.48$ & 2.60 / 1.02 & $1.37$ / $0.84$ \\
$20$ & 0.15 / 0.016 & $1.52$ / $0.37$ &  0.17/ 0.025 & $0.82$ / $1.64$ & 2.54/ 0.69 & $1.31$ / $0.59$ \\
$25$ & 0.14 / 0.013 & $1.16$ / $0.42$ &  0.16/ 0.022 & $1.83$ / $1.53$ & 2.20 / 0.54 & $2.12$ / $1.43$ \\
$30$ & 0.14 / 0.012 & $0.84$ / $0.23$ & 0.12 / 0.014 & $1.65$ / $1.88$ & 2.32 / 0.58 & $1.77$ / $1.45$ \\
\hline
\end{tabular}
\caption{Error comparison of solutions obtained by using CPINNs($\mathscr{C}$) and standard PINNs($\mathscr{P}$) for $\boldsymbol{u_5}$ on $\nu = 0.3,\, 0.4$ and $0.49$.}
\label{table_4}
\end{table}

\begin{figure}
\centering
\subfloat{
{\includegraphics[width=0.32\textwidth]{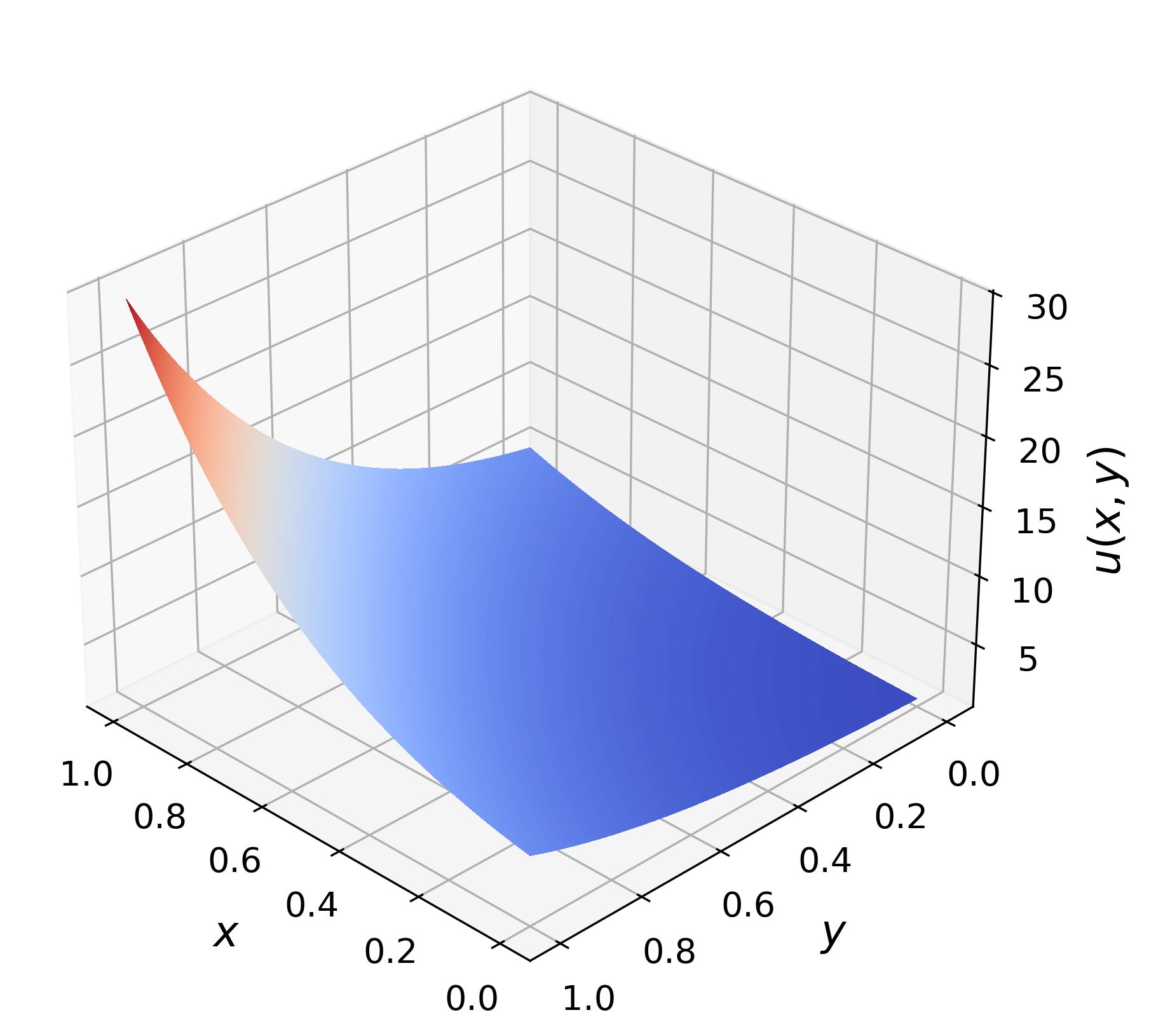}} 
{\includegraphics[width=0.32\textwidth]{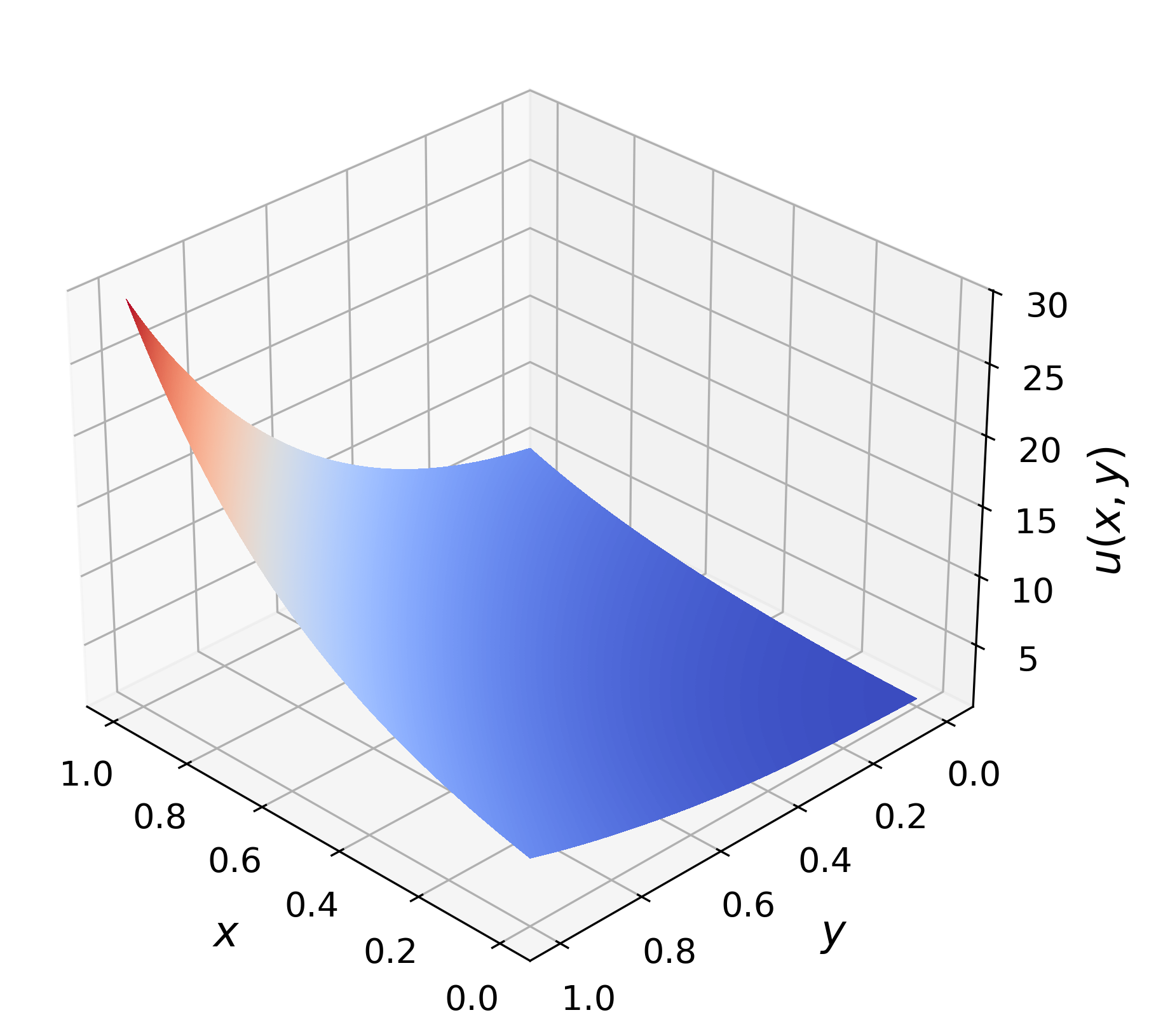}}
{\includegraphics[width=0.32\textwidth]{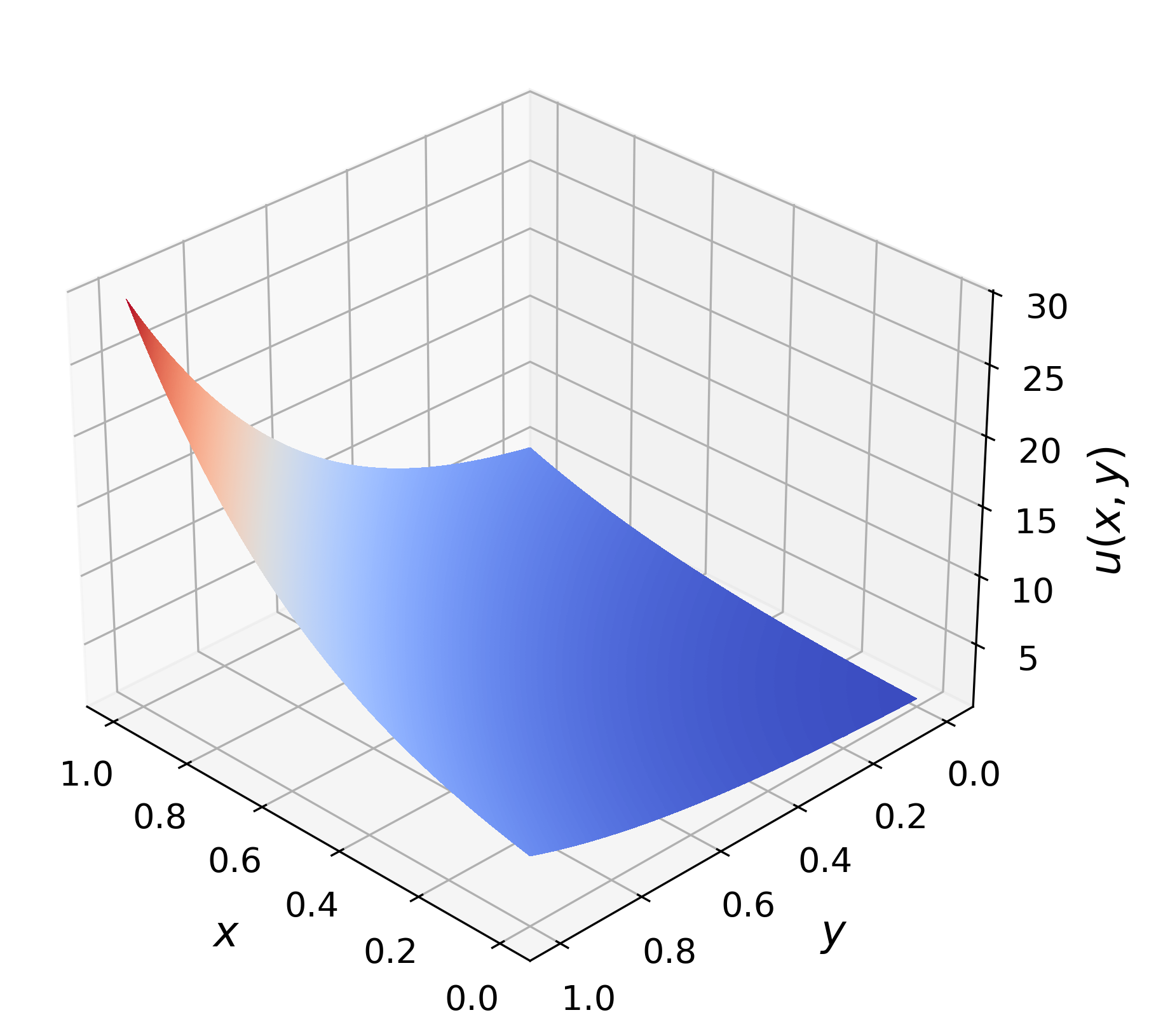}}
}

\subfloat{
{\includegraphics[width=0.32\textwidth]{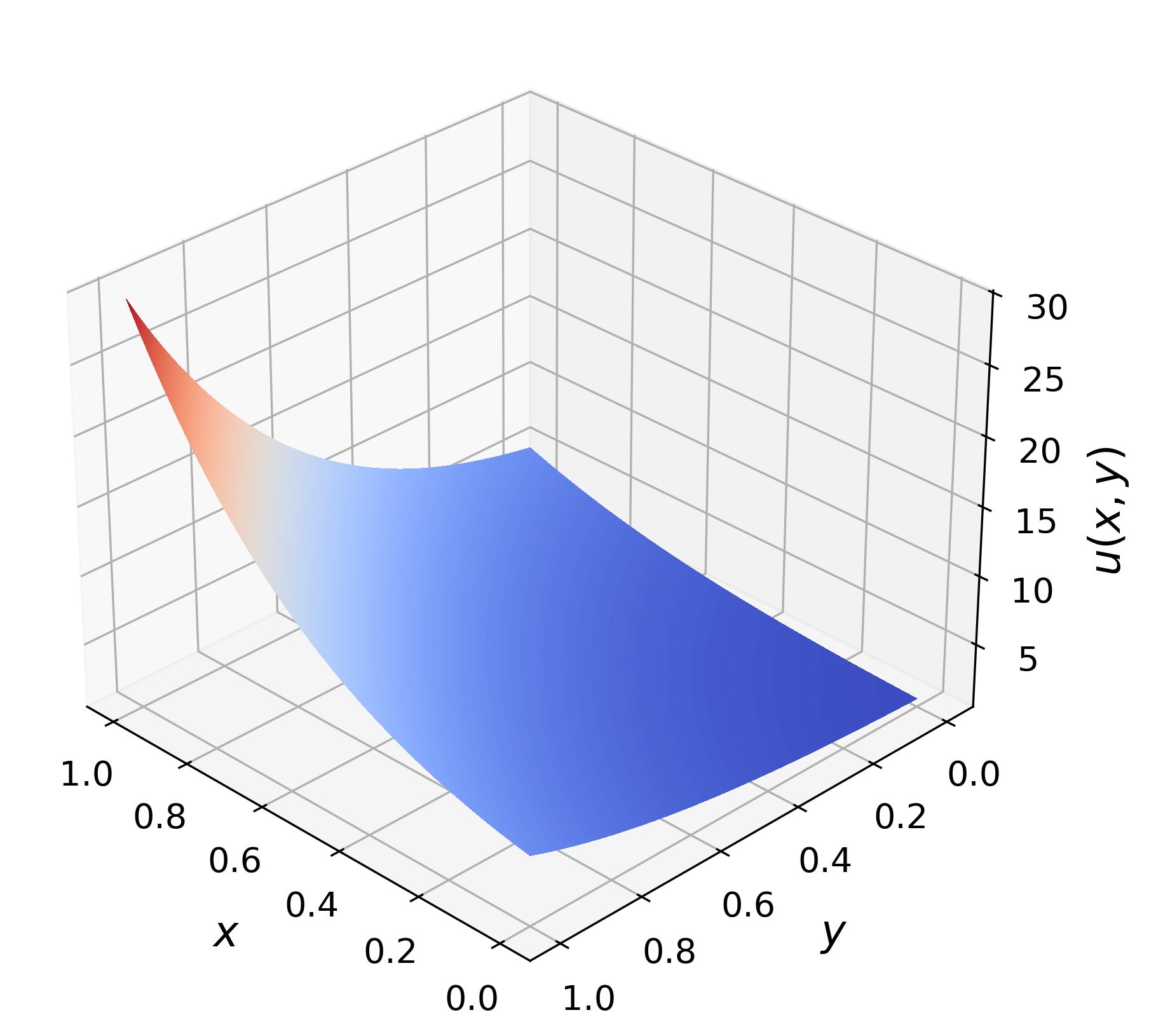}} 
{\includegraphics[width=0.32\textwidth]{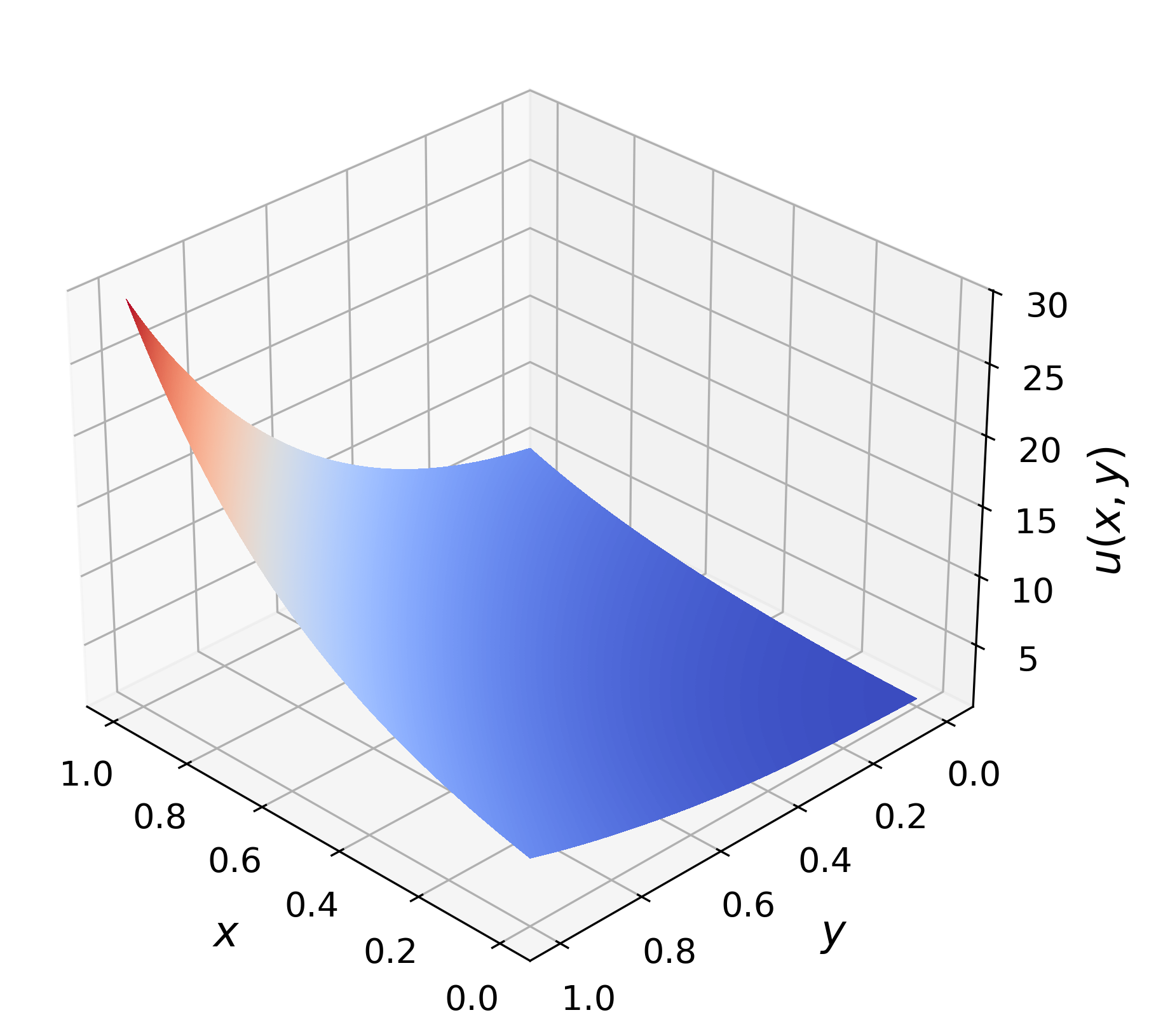}}
{\includegraphics[width=0.32\textwidth]{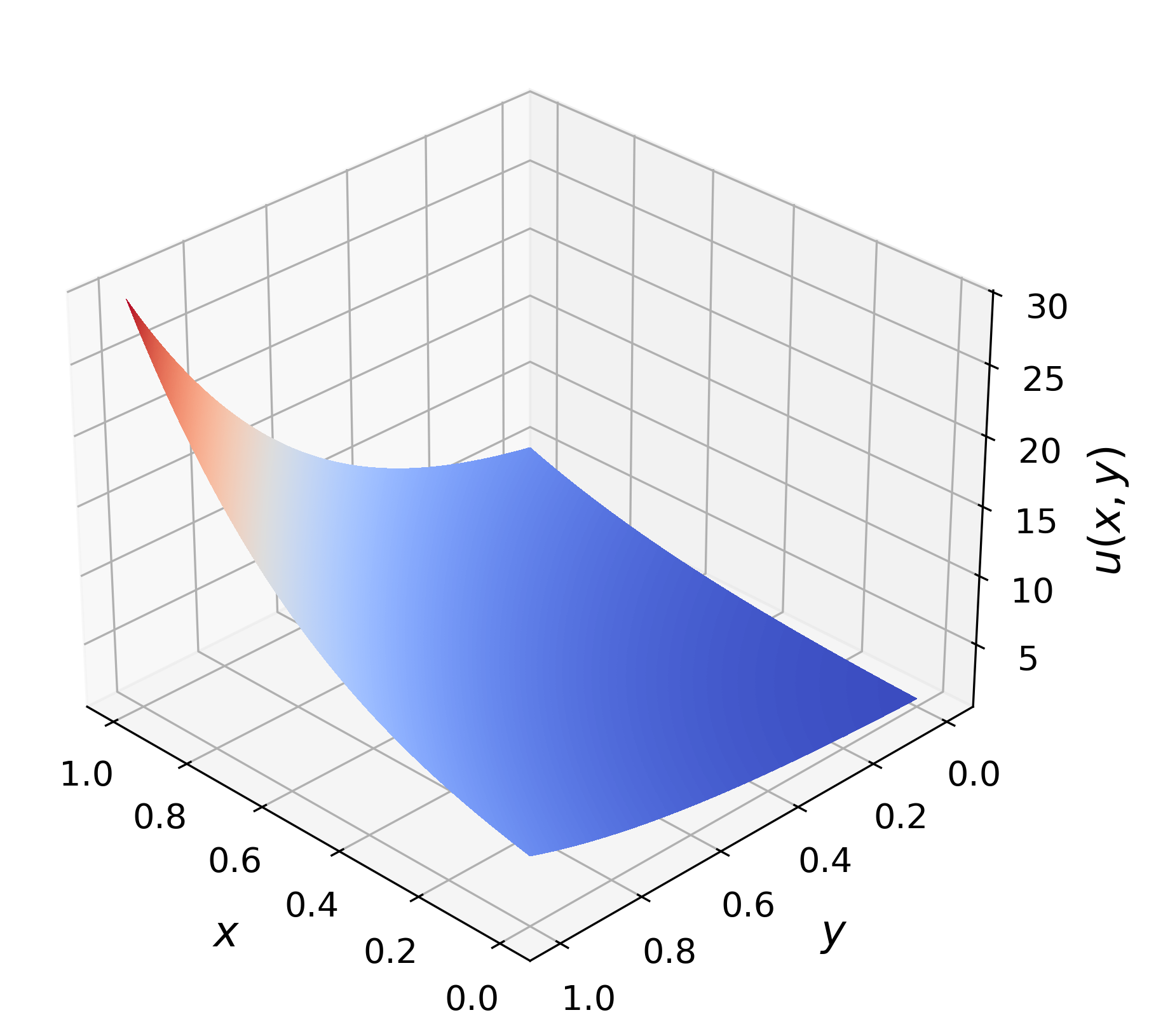}}
}

\caption{Visualization of the true solution (left), standard PINNs prediction using $\L^p_{M}$ (middle) and CPINNs prediction using $\L_{M}^{c}$ (right), evaluated on $15 \times 15$ grid for $\boldsymbol{u_5}$ on $\nu = 0.4$(top row) and $\nu = 0.49$(bottom row).}
\end{figure}

\subsection{Experiment 6 (Biharmonic obstacle problem)} 
We consider a two-dimensional biharmonic obstacle problem with a smooth manufactured solution and a scalar obstacle. The problem is defined as
\begin{align}\label{AN8}
\begin{cases}
\Delta^2 u \geq f \quad &\text{in } \Omega= (-1,1)^2, \\
u \geq \psi,\, (u-\psi)(\Delta^2 u - f) = 0  \quad &\text{in } \Omega, \\
u = g_1,\ \Delta u = g_2 \quad &\text{on } \partial\Omega.
\end{cases}
\end{align}
Introducing the auxiliary variable $v= -\Delta u$, the fourth-order problem is rewritten as the second-order system
\begin{align*}
\Delta v \geq f \quad &\text{in } \Omega, \\
-v + \Delta u = 0 \quad &\text{in } \Omega,
\end{align*}
subject to the constraint $u\geq \psi$, the complementarity condition $(u-\psi)(-\Delta v-f)=0$ in $\Omega$, and the boundary conditions $u=g_1, v=g_2$ on $\partial\Omega$. We prescribe the exact solution and obstacle as
\begin{align*}
u_6 = \cos x\cos y,\ \psi_6 = 0.6\,\cos x\cos y \ \text{on}\ \Omega.
\end{align*}
The forcing term is defined pointwise as
\[
f(x,y):=
\begin{cases}
4\cos x \cos y, & \cos x \cos y \ge 0.6,\\[4pt]
0, & \text{otherwise}.
\end{cases}
\]
Numerical errors and visualizations of the computed solution are reported below.

\subsection{Experiment 7}

In this experiment, we consider an obstacle problem with a constant obstacle on the domain $\Omega = (-1,1)^2.$ The obstacle function is chosen as $\psi_{7}(x,y) \equiv 0.$ For a fixed parameter \( r<1 \) and $|\boldsymbol{x}|^2 = x^2+y^2$, the forcing term is defined by
\[
f_{7}(x,y) =
\begin{cases}
-4\big(2|\boldsymbol{x}|^2 + 2(|\boldsymbol{x}|^2 -r^2)\big), & |\boldsymbol{x}|> r,\\[1mm]
-8r^2\big(1-(|\boldsymbol{x}|^2 -r^2)\big), & |\boldsymbol{x}| \le r.
\end{cases}
\]
The Dirichlet boundary condition is prescribed by $g_{7}(x,y) = (|\boldsymbol{x}|^2 -r^2)^2.$ With these choices, the exact solution of the obstacle problem is given by $$u_{7}(x,y) = \bigl(\max\{|\boldsymbol{x}|^2 -r^2,\,0\}\bigr)^2,$$ which exhibits a nontrivial coincidence set corresponding to the disk $\{|\boldsymbol{x}| \le r\}$. This example serves as a benchmark problem with a known analytical solution and is used to assess the accuracy and convergence properties of the proposed PINNs formulations.
\begin{table}
\centering
\renewcommand{\arraystretch}{1.1}
\begin{tabular}{|c|c c c c|c|}
\hline
Grid &\multicolumn{4}{c|}{$u_{7}(x,y) $} \\
\cline{2-5}
points & Error $(\%)$& Loss $(10^{-5})$  & $\mathfrak{R}_{\mathcal{U}}(\tilde{m}, \bar{m})$ & $ \|(v,\mu)\|_{\U}^2$ \\
$N$ & PINNs / CPINNs & $\L^p_{M}$ / $\L_{M}^c$ & 	  & PINNs / CPINNs  \\
\hline
$5$ & 7.02 / 4.57 & $0.5$ / $0.3$ &  $3.90 \times 10^{-3}$ & $96.91$ / $96.92$ \\
$10$ & 5.17/ 4.63 & $1.0$ / $1.0$  &  $7.71\times 10^{-4}$  & $94.75$ / $94.80$ \\
$15$ & 3.11/ 1.02 & $1.1$ / $3.7$  & $3.18 \times 10^{-4}$ & $94.71$ / $94.72$ \\
$20$ & 3.23 / 1.12& $3.3$ / $1.2$  & $1.73 \times 10^{-4}$  & $94.82$ / $93.89$ \\
$25$ & 4.23 / 1.67& $2.9$ / $1.8$  & $1.08 \times 10^{-4}$  & $89.01$ / $94.67$ \\
$30$ & 3.18/ 1.13& $3.2$ / $1.7$ & $7.43 \times 10^{-5}$ & $93.72$ / $93.92$ \\
\hline
\end{tabular}
\caption{Error comparison of solution \(u_{8}\) obtained by using CPINNs and standard PINNs.}
\label{table_6}
\end{table}

\begin{figure}
\subfloat{
{\includegraphics[width=0.32\textwidth]{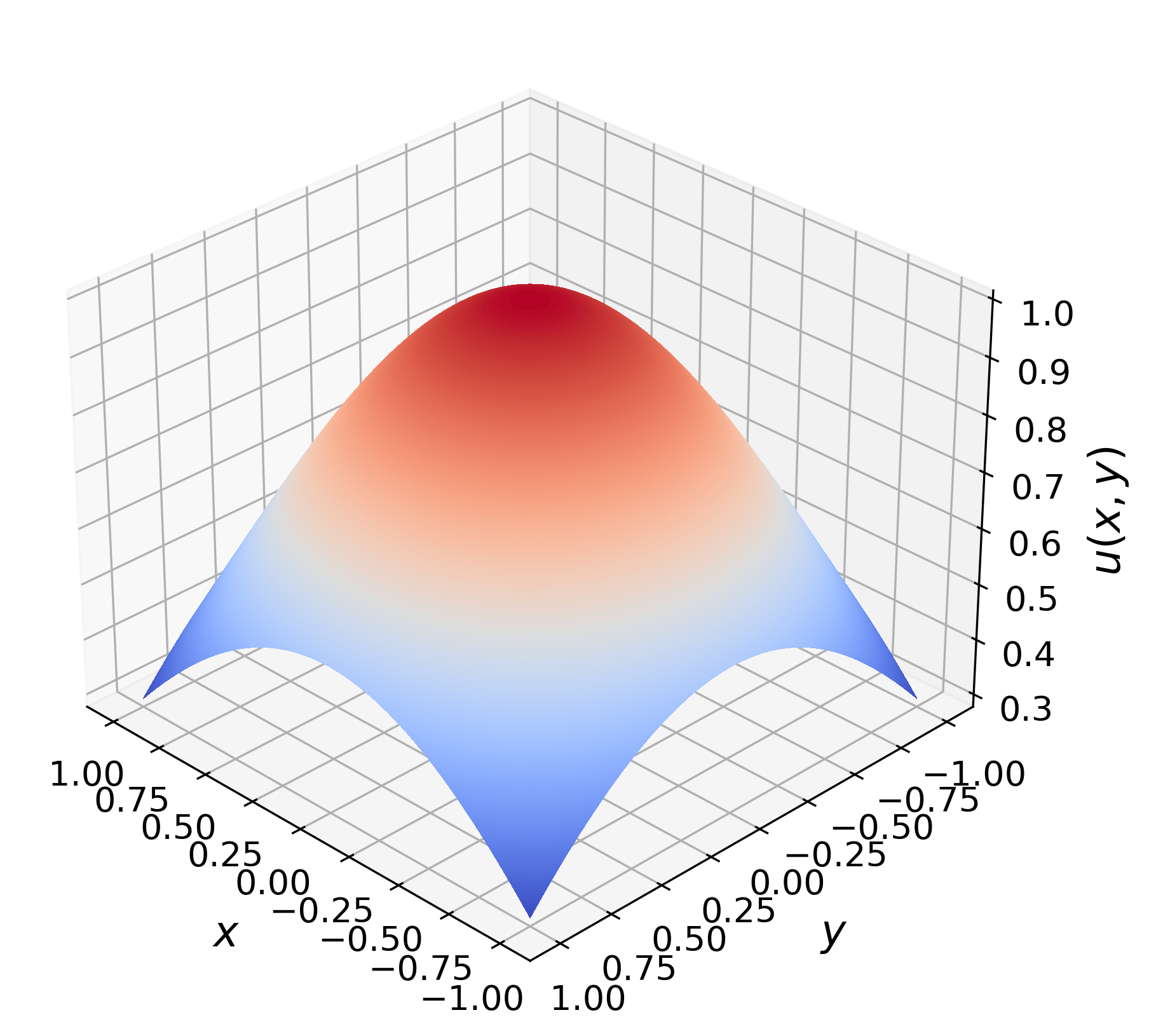}} 
{\includegraphics[width=0.32\textwidth]{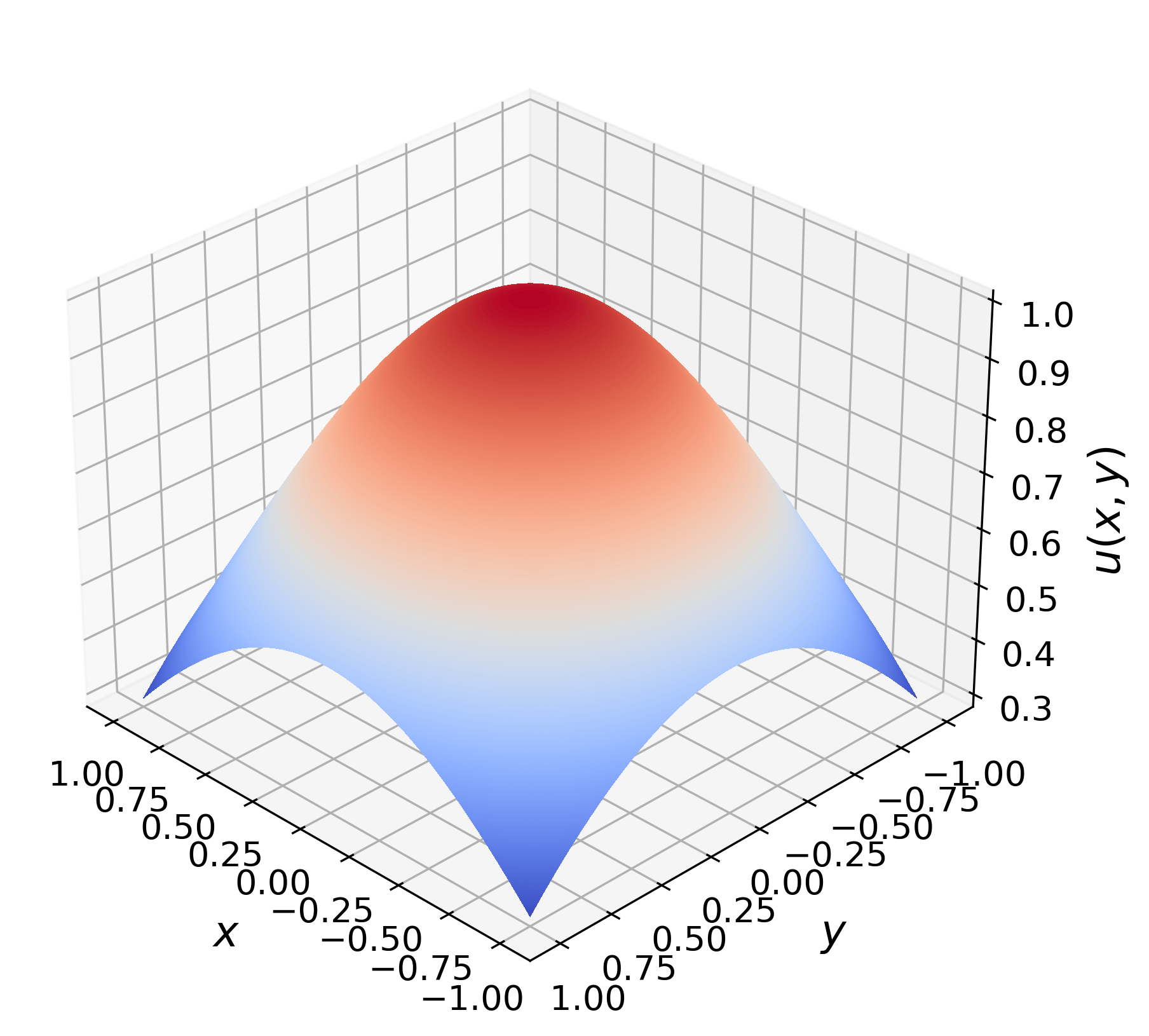}}
{\includegraphics[width=0.32\textwidth]{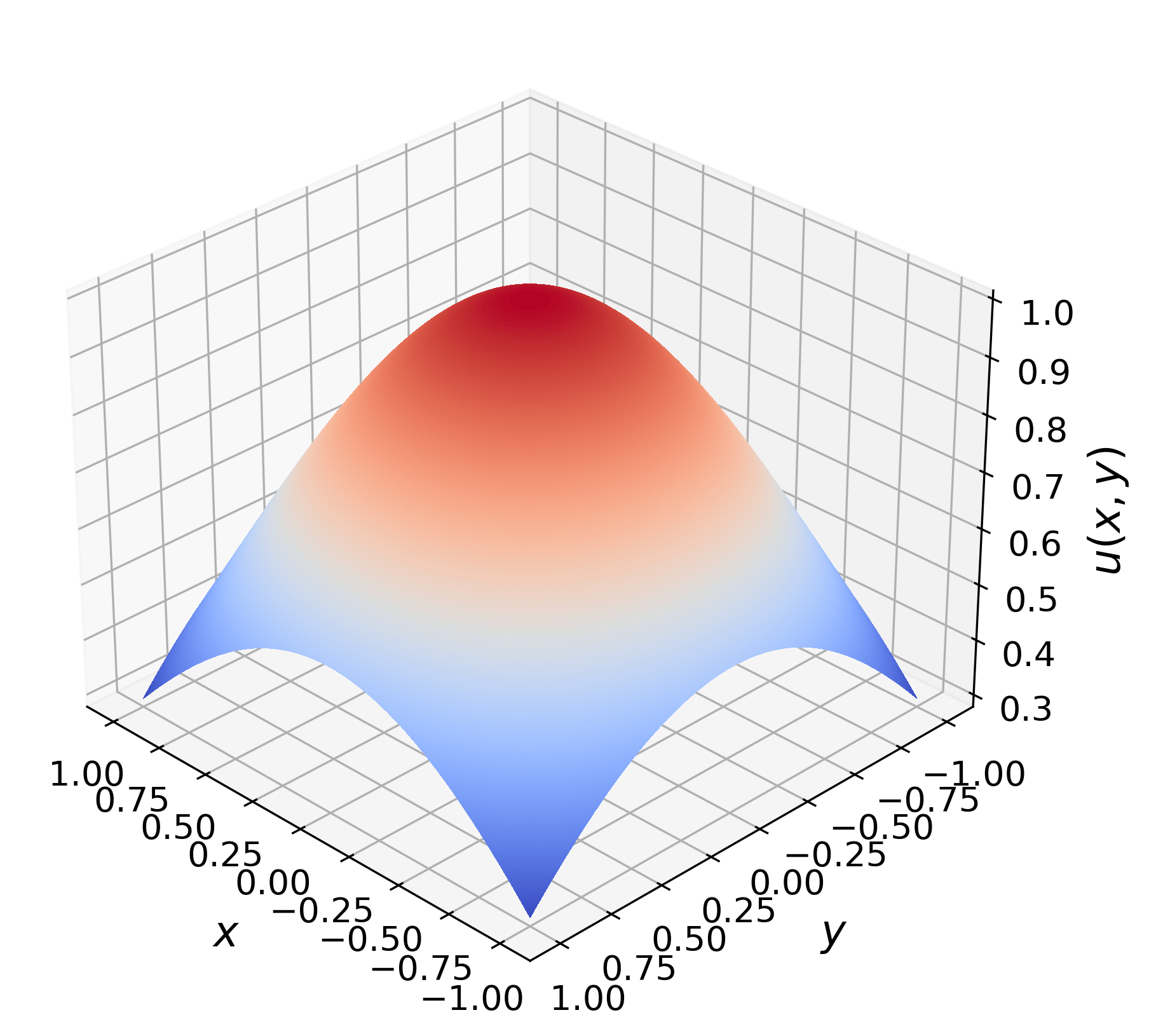}}
}\\
\subfloat{
{\includegraphics[width=0.32\textwidth]{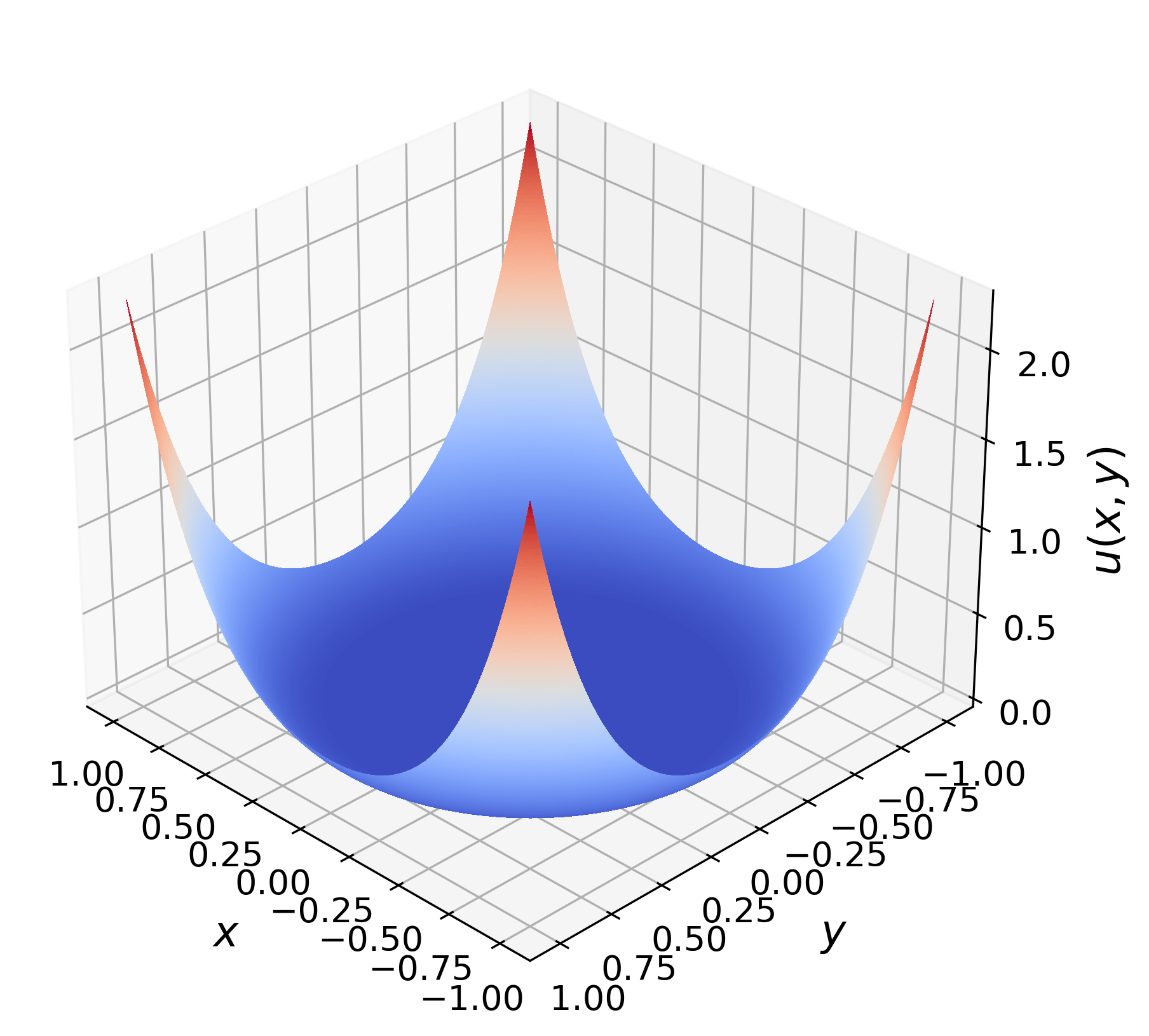}} 
{\includegraphics[width=0.32\textwidth]{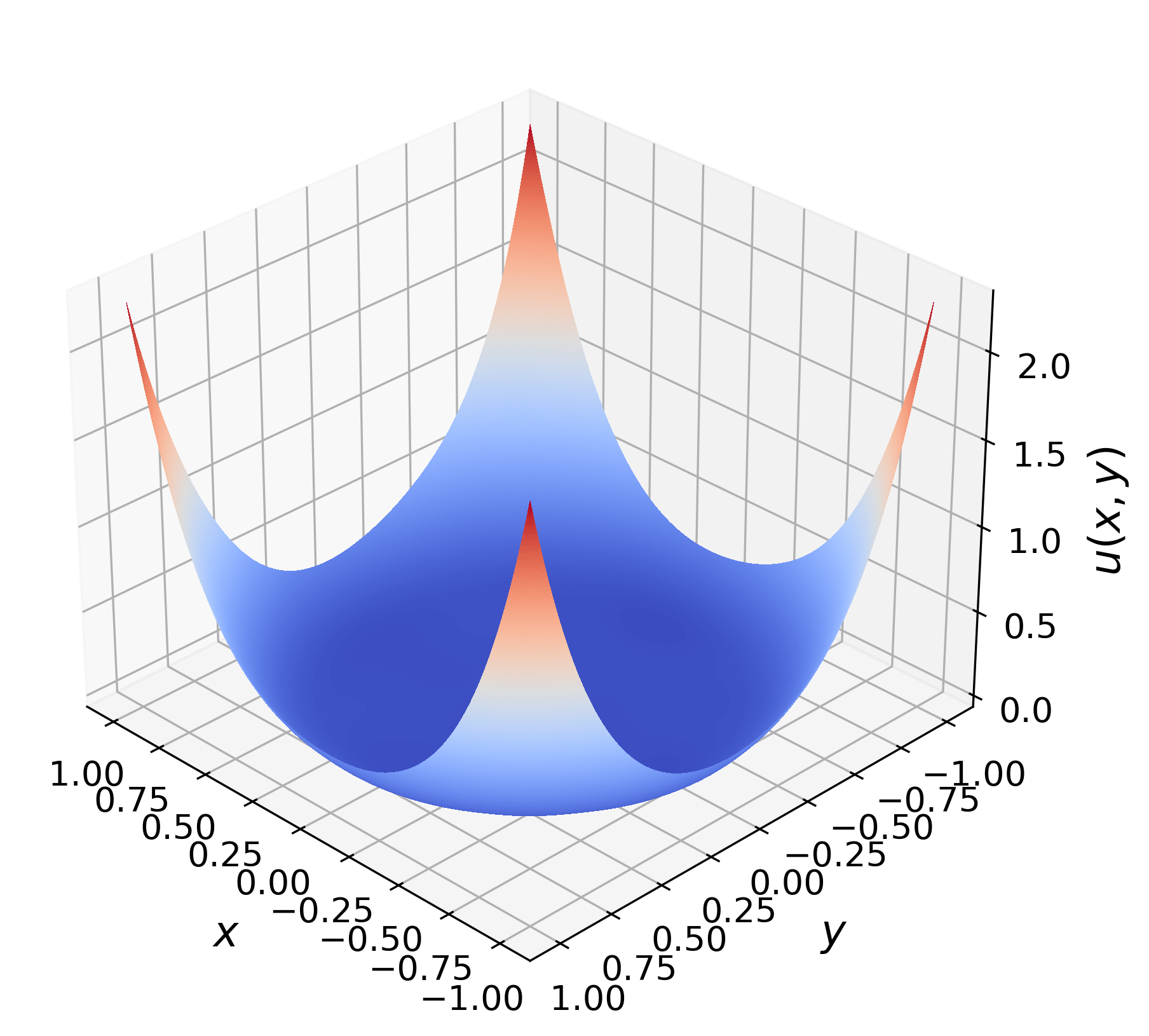}}
{\includegraphics[width=0.32\textwidth]{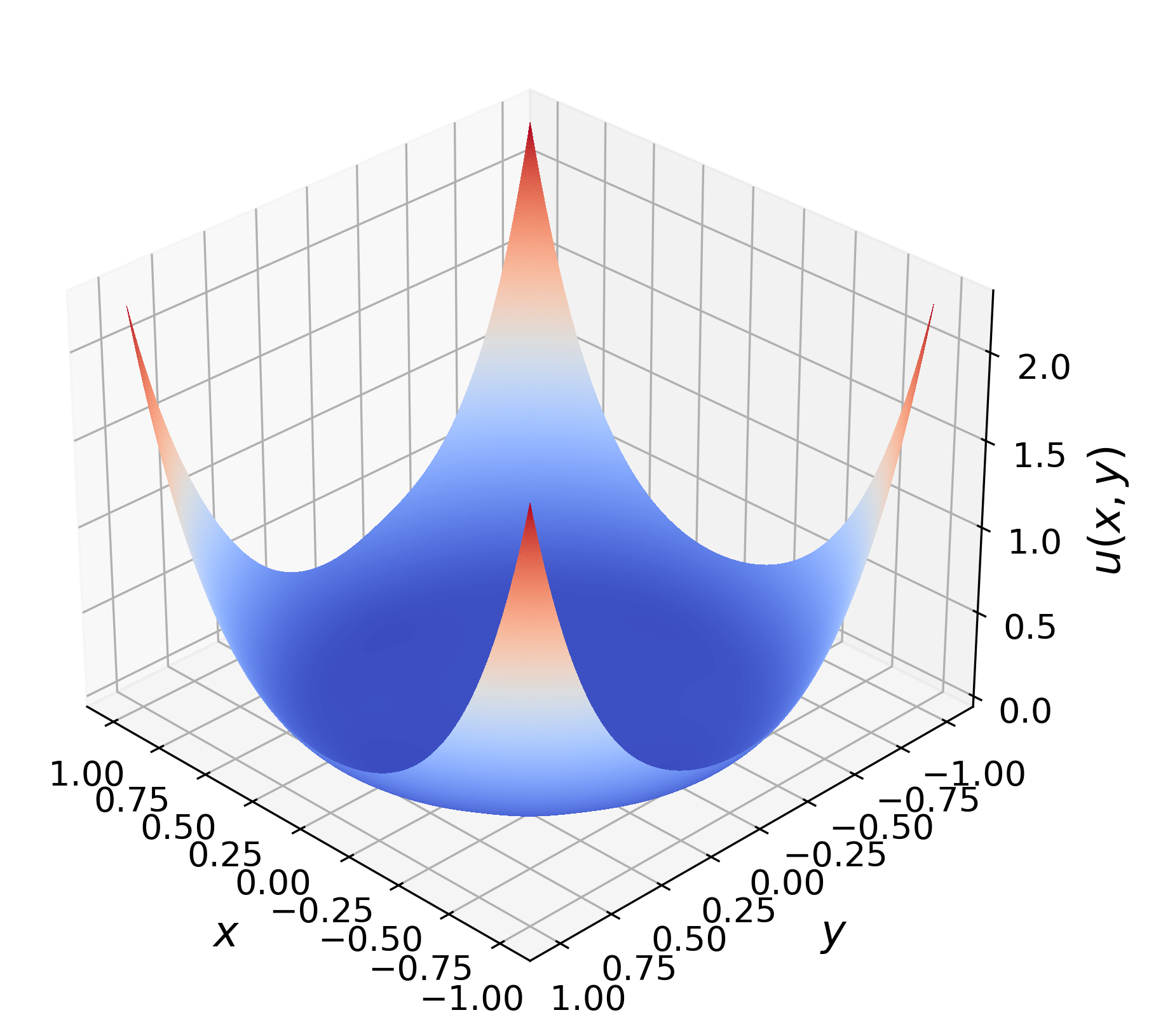}}
}\\

\caption{Plots of the true solution (left), standard PINNs prediction using $\L^p_{M}$ (middle) and CPINNs prediction using $\L_{M}^{c}$ (right), evaluated on $15 \times 15$ grid for for $u_6$ (top row) and $u_7$ (bottom row).}
\end{figure}
The numerical results reported in Table~\ref{table_6} confirm the theoretical estimate
\begin{equation}\label{eq:exp8_stability}
\|(u,\lambda)-(v,\mu)\|_{U}^{2}
\;\lesssim\;
\L_{M}^{c}(v,\mu)
+
\big(\|(v,\mu)\|_{\U}^{2}+1\big)\,
\mathfrak{R}_{\U}(\tilde m,\bar m),
\end{equation}
where the recovery residual \(\mathfrak{R}_{\U}(\tilde m,\bar m)\) is evaluated for the
Besov smoothness parameter \(s=2\).
As the number of interior and boundary collocation points increases, both the
discrete mixed loss \(\L_{M}^{c}\) and the recovery residual \(\mathfrak{R}_{\U}(\tilde m,\bar m)\)
decrease, which results in a consistent reduction of the \(\U\)-norm error observed in
the table.

\section{Conclusion}
This work develops a consistent physics-informed neural network framework for obstacle problems, grounded in PDE stability and variational inequality theory. By formulating mixed CPINNs loss equivalent to the natural stability norms, we establish rigorous optimal recovery guarantees for both the solution and the associated Lagrange multiplier under Besov regularity assumptions. Discrete norm equivalence results have been established, which further yield fully computable training losses based on pointwise data, which bridges the gap between continuous theory and practical implementation. Numerical experiments based on scalar, vector-valued, and higher-order obstacle problems confirm the accuracy, robustness, and superiority of the proposed approach over standard PINNs. These results extend CPINNs theory beyond classical PDEs to inequality-constrained problems and provide a solid foundation for future research on time-dependent obstacle problems, adaptive sampling strategies, and broader classes of variational inequalities.

\section*{Data availability}
The datasets generated during this study are available from the corresponding author upon reasonable request.

\bibliographystyle{siamplain}
\bibliography{reference_2n}
\end{document}